\documentclass[11pt]{amsart}

\usepackage[T1]{fontenc}
\usepackage{lmodern}
\usepackage{microtype}
\usepackage{amsmath,amssymb,amsthm,mathtools}
\usepackage[margin=1in]{geometry}
\usepackage{enumitem}
\usepackage{booktabs}
\usepackage{aliascnt}
\usepackage[colorlinks=true,linkcolor=blue,citecolor=blue,urlcolor=blue]{hyperref}
\usepackage[nameinlink,capitalise]{cleveref}

\newtheorem{theorem}{Theorem}[section]
\newaliascnt{corollary}{theorem}
\newtheorem{corollary}[corollary]{Corollary}
\aliascntresetthe{corollary}
\newaliascnt{proposition}{theorem}
\newtheorem{proposition}[proposition]{Proposition}
\aliascntresetthe{proposition}
\newaliascnt{lemma}{theorem}
\newtheorem{lemma}[lemma]{Lemma}
\aliascntresetthe{lemma}
\newaliascnt{claim}{theorem}

\aliascntresetthe{claim}
\theoremstyle{definition}
\newaliascnt{definition}{theorem}

\aliascntresetthe{definition}
\theoremstyle{remark}
\newaliascnt{remark}{theorem}
\newtheorem{remark}[remark]{Remark}
\aliascntresetthe{remark}

\crefname{theorem}{Theorem}{Theorems}
\Crefname{theorem}{Theorem}{Theorems}
\crefname{lemma}{Lemma}{Lemmas}
\Crefname{lemma}{Lemma}{Lemmas}
\crefname{proposition}{Proposition}{Propositions}
\Crefname{proposition}{Proposition}{Propositions}
\crefname{corollary}{Corollary}{Corollaries}
\Crefname{corollary}{Corollary}{Corollaries}

\numberwithin{equation}{section}

\newcommand{\1}{\mathbf 1}
\newcommand{\Jall}{\mathcal J}
\newcommand{\Proj}{\Pi_0}
\newcommand{\per}{\operatorname{per}}
\newcommand{\tr}{\operatorname{tr}}
\newcommand{\op}{\mathrm{op}}
\newcommand{\F}{\mathrm F}
\newcommand{\E}{\mathbb E}
\newcommand{\cM}{\mathcal M}
\newcommand{\abs}[1]{\left\lvert #1\right\rvert}
\newcommand{\norm}[1]{\left\lVert #1\right\rVert}

\title[Sublinear Latin rectangles and Erd\H{o}s Problem 725]{The Godsil--McKay Asymptotic for Latin Rectangles in the Sublinear Range of Erd\H{o}s Problem 725}
\author[E. Li]{Eric Li}
\dedicatory{\normalfont\normalsize Trinity College, University of Cambridge}
\date{2 August 2026}
\thanks{Email: \href{mailto:contact@ericli.com}{contact@ericli.com}.}
\subjclass[2020]{Primary 05B15; Secondary 05A16, 05C70, 15A15, 60C05}
\keywords{Latin rectangles, Erd\H{o}s Problem 725, permanents, cluster expansions, exchangeable pairs, formal verification}

\hypersetup{
  pdftitle={The Godsil--McKay Asymptotic for Latin Rectangles in the Sublinear Range of Erd\H{o}s Problem 725},
  pdfauthor={Eric Li},
  pdfsubject={Godsil--McKay asymptotic for Latin rectangles in the range k=o(n), formally verified in Lean},
  pdfkeywords={Latin rectangles, Erd\H{o}s Problem 725, permanents, cluster expansions, exchangeable pairs, formal verification}
}

\begin{document}

\begin{abstract}
Erd\H{o}s Problem~725 asks for an asymptotic formula for the number $L_{k,n}$
of ordered, labelled $k\times n$ Latin rectangles.  Godsil and McKay proved
that
\[
L_{k,n}\sim
(n!)^k\left(\frac{(n)_k}{n^k}\right)^n
\left(1-\frac{k}{n}\right)^{-n/2}e^{-k/2}
\]
for $k=o(n^{6/7})$.  We provide a partial solution to Erd\H{o}s Problem~725
by proving this asymptotic for every $k=o(n)$.  More precisely, set
\[
\widetilde A_{k,n}
=(n!)^k\left(\frac{(n)_k}{n^k}\right)^n
\exp\!\left\{\frac12\bigl[n(H_n-H_{n-k})-k\bigr]\right\}.
\]
For every $K(n)=o(n)$, uniformly for $0\le k\le K(n)$,
\[
\log\frac{L_{k,n}}{\widetilde A_{k,n}}
=O\!\left(\frac{k^2}{n^2}\right).
\]
The implied constant is absolute.  The results of this paper have been
formally verified in Lean.
\end{abstract}

\maketitle
\enlargethispage{3pt}

\section{Introduction}

Throughout, $n$ is a positive integer and $0\le k\le n$.  A $k\times n$ \emph{Latin rectangle} is an array with entries in $[n]=\{1,\dots,n\}$ such that every row is a permutation of $[n]$ and no symbol is repeated in a column.  Rows, columns, and symbols are labelled, and the rows are ordered.  We write $L_{k,n}$ for the number of such rectangles and put $L_{0,n}=1$.  We use the falling-factorial notation
\[
(x)_r=x(x-1)\cdots(x-r+1),
\qquad
(x)_0=1.
\]

Erd\H{o}s Problem~725 asks for an asymptotic formula for $L_{k,n}$.\footnote{All
results of this paper have been formally verified in Lean; see
\cref{sec:formalisation} and \cite{LiLean725}.}  We provide a partial solution
to the problem by establishing, throughout the full sublinear range,
\begin{equation}\label{eq:GM-target-intro}
L_{k,n}\sim
(n!)^k\left(\frac{(n)_k}{n^k}\right)^n
\left(1-\frac{k}{n}\right)^{-n/2}e^{-k/2}
\end{equation}
for every $k=o(n)$, uniformly on every prescribed sublinear range.  Writing
$A_{k,n}$ for the right-hand side, for every $K(n)=o(n)$ we prove uniformly
for $0\le k\le K(n)$ that
\[
\frac{L_{k,n}}{A_{k,n}}=1+O\!\left(\frac{k}{n}\right).
\]
The refined normalization $\widetilde A_{k,n}$ defined in
\eqref{eq:Atilde-def} improves both errors uniformly:
\[
\log\frac{L_{k,n}}{\widetilde A_{k,n}}
=O\!\left(\frac{k^2}{n^2}\right),
\qquad
\frac{L_{k,n}}{\widetilde A_{k,n}}
=1+O\!\left(\frac{k^2}{n^2}\right).
\]

Equivalently, a Latin rectangle is an ordered family of $k$ edge-disjoint perfect matchings of $K_{n,n}$: the left vertices are columns, the right vertices are symbols, and each row supplies one perfect matching.  Consequently, if an $m\times n$ rectangle has already been chosen, then the possible next rows are the perfect matchings of the complementary $(n-m)$-regular bipartite graph.

The asymptotic problem was initiated by Erd\H{o}s and Kaplansky
\cite{ErdosKaplansky}.  Subsequent work of Yamamoto and Stein enlarged the
range in which a growing number of rows could be treated; see
\cite{Yamamoto1951,Stein}, the historical account in
\cite[Section~1]{GodsilMcKay}, and the broader survey \cite{StonesSurvey}.
Godsil and McKay used the extension viewpoint, together with a rook-polynomial
expansion, to prove \eqref{eq:GM-target-intro} for $k=o(n^{6/7})$; see
\cite{GodsilMcKay}.  We establish it throughout the full sublinear range.

An unpublished formula stated in public lecture slides by Wormald is
attributed there to Leckey, Liebenau, and Wormald for
$k=o(n/\log^3 n)$; the same slides note that the displayed expression is
asymptotic to the Godsil--McKay expression whenever $k=o(n)$; see
\cite[slides~58--59]{WormaldSlides}.  Separately, a 2025 seminar announcement
describes joint work by Liebenau with Ledecky and Wormald, but gives no theorem
statement or parameter range; see \cite{LiebenauSeminar}.  Our comparison is
solely with these public statements, and we make no assertion concerning
unpublished results.  Relative to the range stated in Wormald's slides,
\cref{thm:main} extends $o(n/\log^3 n)$ to arbitrary prescribed sublinear
ranges.

For $j\ge1$, let
\[
H_j=\sum_{r=1}^j\frac1r,
\qquad H_0=0,
\]
and, for $0\le k<n$, define
\begin{align}
\widetilde A_{k,n}
&=(n!)^k\left(\frac{(n)_k}{n^k}\right)^n
  \exp\!\left\{\frac12\bigl[n(H_n-H_{n-k})-k\bigr]\right\},
  \label{eq:Atilde-def}\\
A_{k,n}
&=(n!)^k\left(\frac{(n)_k}{n^k}\right)^n
  \left(1-\frac{k}{n}\right)^{-n/2}e^{-k/2}.
  \label{eq:A-def}
\end{align}

Whenever a range function is used below, it means a map
$K:\mathbb N\to\mathbb Z_{\ge0}$ satisfying $K(n)=o(n)$.  In particular,
$K(n)<n$ for all sufficiently large $n$.

\begin{theorem}[Quantitative sublinear enumeration]\label{thm:main}
There is an absolute constant $C_{\mathrm{main}}<\infty$ with the following
property.  For every such function $K$,
there is an index $n_0=n_0(K)$ such that, whenever $n\ge n_0$ and
$0\le k\le K(n)$,
\begin{equation}\label{eq:main-tilde}
\left|\log\frac{L_{k,n}}{\widetilde A_{k,n}}\right|
\le C_{\mathrm{main}}\frac{k^2}{n^2}.
\end{equation}
Consequently, after increasing $C_{\mathrm{main}}$ if necessary,
\begin{equation}\label{eq:main-GM}
\left|\log\frac{L_{k,n}}{A_{k,n}}\right|
\le C_{\mathrm{main}}\left(\frac{k}{n}+\frac{k^2}{n^2}\right)
\le 2C_{\mathrm{main}}\frac{k}{n}
\end{equation}
throughout the same range.
\end{theorem}

\begin{corollary}[Relative forms]\label{cor:sublinear}
For every such function $K$, uniformly for $0\le k\le K(n)$,
\begin{align}
\frac{L_{k,n}}{\widetilde A_{k,n}}
&=1+O\!\left(\frac{k^2}{n^2}\right),
\label{eq:sublinear-relative-tilde}\\
\frac{L_{k,n}}{A_{k,n}}
&=1+O\!\left(\frac{k}{n}\right).
\label{eq:sublinear-relative}
\end{align}
In particular, if $k=o(n)$, then
\begin{equation}\label{eq:sublinear-asymptotic}
L_{k,n}\sim A_{k,n}.
\end{equation}
The constants in the two $O$-terms are absolute; the index from which the
estimates hold may depend on the prescribed function $K$.
\end{corollary}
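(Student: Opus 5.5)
The plan is to derive \cref{cor:sublinear} directly from \cref{thm:main} by exponentiating, using only the elementary inequality $|e^x-1|\le |x|e^{|x|}$ for real $x$. Fix a range function $K$ and let $n_0=n_0(K)$ be as in \cref{thm:main}. Since $K(n)=o(n)$, we may enlarge $n_0$ so that $K(n)\le n/2$ for $n\ge n_0$. Then $k<n$, so $\widetilde A_{k,n}$ and $A_{k,n}$ are defined and positive, and $k/n\le 1/2$ throughout the range $n\ge n_0$, $0\le k\le K(n)$.

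For \eqref{eq:sublinear-relative-tilde}, put $x=\log(L_{k,n}/\widetilde A_{k,n})$. By \eqref{eq:main-tilde}, $|x|\le C_{\mathrm{main}}k^2/n^2\le C_{\mathrm{main}}/4$. Hence
\[
\left|\frac{L_{k,n}}{\widetilde A_{k,n}}-1\right|=|e^x-1|\le e^{C_{\mathrm{main}}/4}\,C_{\mathrm{main}}\frac{k^2}{n^2}.
\]
The constant $e^{C_{\mathrm{main}}/4}C_{\mathrm{main}}$ is absolute.

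For \eqref{eq:sublinear-relative}, the argument is the same. Put $y=\log(L_{k,n}/A_{k,n})$. The bound \eqref{eq:main-GM} gives $|y|\le 2C_{\mathrm{main}}k/n\le C_{\mathrm{main}}$. Therefore
\[
|L_{k,n}/A_{k,n}-1|\le 2C_{\mathrm{main}}e^{C_{\mathrm{main}}}\,k/n,
\]
again with an absolute constant. Only the threshold $n_0$ depends on $K$, through \cref{thm:main} and the requirement $K(n)\le n/2$. This matches the final sentence of the corollary.

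For \eqref{eq:sublinear-asymptotic}, let $k=k(n)$ be any sequence with $k(n)=o(n)$. Apply the previous step with the range function $K(n):=k(n)$, which is $o(n)$. Then $|L_{k,n}/A_{k,n}-1|=O(k/n)\to0$, so $L_{k,n}\sim A_{k,n}$. The degenerate case $k=0$ is consistent: $L_{0,n}=1$ and $A_{0,n}=\widetilde A_{0,n}=1$.

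There is no real obstacle here. The only points to watch are choosing $n_0$ so that $k<n$ and $k/n$ is bounded, which keeps the exponential factor absolute, and stating uniformity correctly: the constants do not depend on $K$, while the threshold may.
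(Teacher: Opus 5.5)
Your proposal is correct and follows the paper's proof: both exponentiate \eqref{eq:main-tilde} and \eqref{eq:main-GM} using $|e^x-1|\le e^{|x|}|x|$. The only cosmetic difference is how $e^{|x|}$ is kept bounded: you use $k\le n/2$ to bound the exponents by absolute constants, while the paper notes that the exponents are $o(1)$ uniformly on the range.
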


\begin{remark}[Dependence of constants and thresholds]\label{rem:dependencies}
The uniformity used later is summarized in the following table.
\begin{center}
\small
\begin{tabular}{@{}ll@{}}
\toprule
quantity & dependence \\
\midrule
$C_{\mathrm{perm}}(C_0)$ & $C_0$ only \\
permanent-theorem threshold & $C_0$ and the prescribed sequence $\eta_\bullet$ \\
$c_{\mathrm{ext}}$ & absolute, because the Latin application has $C_0=1$ \\
one-row threshold & the prescribed range function $K$ \\
$C_{\mathrm{main}}$ & absolute \\
main-theorem threshold & the prescribed range function $K$ \\
\bottomrule
\end{tabular}
\end{center}
An ``absolute'' constant is independent of $K$; the index from which an
estimate holds may depend on $K$ as indicated.
\end{remark}

For fixed $k$, the normalization in \eqref{eq:Atilde-def} captures one
additional order in the small-height expansion beyond the normalization
stated in Wormald's public slides.

\begin{proposition}[Comparison with the normalization stated in Wormald's public slides]\label{prop:LLW-comparison}
Define
\begin{equation}\label{eq:LLW-def}
\operatorname{LLW}_{k,n}
=\frac{(n!)^k((n)_k)^{2n}}{e^{k/2}(n^2)_{kn}}.
\end{equation}
For $0\le k<n$,
\begin{align}
\log\frac{\operatorname{LLW}_{k,n}}{\widetilde A_{k,n}}
={}&
n\sum_{j=0}^{k-1}\log\left(1-\frac{j}{n}\right)
-\sum_{r=0}^{kn-1}\log\left(1-\frac{r}{n^2}\right) \notag\\
&-\frac12\sum_{j=0}^{k-1}\frac{n}{n-j}.
\label{eq:LLW-exact-comparison}
\end{align}
For every fixed $k$,
\begin{equation}\label{eq:LLW-fixed-k}
\log\frac{\operatorname{LLW}_{k,n}}{\widetilde A_{k,n}}
=-\frac{k}{3n}-\frac{k(k+1)}{12n^2}+O_k(n^{-3}).
\end{equation}
\end{proposition}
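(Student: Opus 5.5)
The identity \eqref{eq:LLW-exact-comparison} is pure algebra, and \eqref{eq:LLW-fixed-k} follows from it by Taylor expansion with $k$ fixed. For the identity, I will form the ratio of \eqref{eq:LLW-def} and \eqref{eq:Atilde-def}. The factor $(n!)^k$ cancels, and the exponentials $e^{-k/2}$ on both sides cancel against each other, leaving
\[
\frac{\operatorname{LLW}_{k,n}}{\widetilde A_{k,n}}
=\frac{((n)_k)^{n}\,n^{kn}}{(n^2)_{kn}}
\exp\!\Bigl\{-\tfrac12 n(H_n-H_{n-k})\Bigr\}.
\]
Next I factor out the leading powers:
\[
(n)_k=n^k\prod_{j=0}^{k-1}\Bigl(1-\frac jn\Bigr),
\qquad
(n^2)_{kn}=n^{2kn}\prod_{r=0}^{kn-1}\Bigl(1-\frac r{n^2}\Bigr).
\]
Then $((n)_k)^n n^{kn}$ contributes $n^{2kn}$, which cancels the $n^{2kn}$ in $(n^2)_{kn}$. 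Finally,
\[
H_n-H_{n-k}=\sum_{i=n-k+1}^{n}\frac1i=\sum_{j=0}^{k-1}\frac1{n-j}.
\]
Taking logarithms gives \eqref{eq:LLW-exact-comparison} term by term. The restriction $k<n$ ensures that every factor is positive, so the logarithms are defined.

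For \eqref{eq:LLW-fixed-k} I fix $k$, set $S_p=\sum_{j=0}^{k-1}j^p$, and expand each of the three sums to $O_k(n^{-3})$.
\begin{itemize}[leftmargin=*]
\item The first sum is $n\sum_j\log(1-j/n)=-S_1-\frac{S_2}{2n}-\frac{S_3}{3n^2}+O_k(n^{-3})$, because $j\le k$ is bounded.
\item The third sum is $-\frac12\sum_j(1-j/n)^{-1}=-\frac k2-\frac{S_1}{2n}-\frac{S_2}{2n^2}+O_k(n^{-3})$.
\item For the middle sum I write $-\sum_{r<kn}\log(1-r/n^2)=\sum_{m\ge1}\frac1{m\,n^{2m}}\sum_{r=0}^{kn-1}r^m$. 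Since $r<kn$, the $m$-th term has size $O_k(n^{1-m})$. The tail $m\ge4$ is therefore $O_k(n^{-3})$ uniformly, because $r/n^2\le k/n$ makes the series geometric. For $m=1,2,3$ I use Faulhaber's formulas with $R=kn$: $\sum_{r<R}r=\frac{R(R-1)}2$, $\sum_{r<R} r^2=\frac{(R-1)R(2R-1)}6$, and $\sum_{r<R} r^3=\frac{R^2(R-1)^2}4$. Each of these is exactly a polynomial in $n$, so the contributions come out as explicit Laurent polynomials in $n$.
\end{itemize}

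Collecting powers of $n$ completes the argument. At order $n^0$ the terms are $-\frac{k(k-1)}2$, $\frac{k^2}{2}$ and $-\frac k2$ from the $m=1$ term, and $-\frac k2$ from the third sum. This already looks off, so I must recheck: the $m=1$ term is $\frac{kn(kn-1)}{2n^2}=\frac{k^2}2-\frac{k}{2n}$, with no $n^0$ contribution beyond $\frac{k^2}{2}$. The $m=2$ term contributes $\frac{k^3n}{6n^3}\cdot 2\cdot\frac{1}{2}\,n\cdots$, which is of order $n^0$ as well, and this is exactly where care is needed. Indeed $\frac1{2n^4}\cdot\frac{(kn)^3}{3}\approx\frac{k^3}{6n}$, which is order $n^{-1}$, so order $n^0$ receives contributions only from $m=1$ and the constants. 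The order-$n^0$ total $-\frac{k(k-1)}2+\frac{k^2}2-\frac k2$ vanishes, as it must.

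The $n^{-1}$ and $n^{-2}$ coefficients are then polynomials in $k$ obtained from $S_1,S_2,S_3$ and from the lower-order parts of the Faulhaber expressions. The cubic and quadratic parts in $k$ should cancel, leaving $-\frac k3$ and $-\frac{k(k+1)}{12}$ respectively. I expect this coefficient bookkeeping to be the only real obstacle: several contributions of the same order from different $m$ must be matched. To guard against slips, I will verify the final polynomial identities symbolically and check small cases $k=1,2$ against a direct numerical expansion.
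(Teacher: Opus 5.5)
Your proposal is correct and follows the paper's proof. You cancel $(n!)^k$ and $e^{-k/2}$, pull $n^{2kn}$ out of both falling factorials and write $n(H_n-H_{n-k})=\sum_{j<k}n/(n-j)$ to get the exact identity, then expand the three sums for fixed $k$ using the power sums $S_q$ and Faulhaber's formulas for $T_q=\sum_{r<kn}r^q$. The bookkeeping you deferred does close, and the false starts in your order-$n^0$ paragraph should simply be deleted. The middle sum contributes $\frac{k^2}{2}$, $\frac{k^3}{6}-\frac k2$ and $\frac{k^4}{12}-\frac{k^2}{4}$ at orders $n^0,n^{-1},n^{-2}$. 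These combine with $-S_1-\frac k2$, $-\frac{S_1+S_2}{2}=-\frac{k^3-k}{6}$ and $-\frac{S_2}{2}-\frac{S_3}{3}=-\frac{k^4-2k^2+k}{12}$ to give $0$, $-\frac k3$ and $-\frac{k(k+1)}{12}$.
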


\begin{proof}
Substituting \eqref{eq:Atilde-def} and \eqref{eq:LLW-def}, cancelling
$(n!)^k$ and $e^{-k/2}$, and extracting the powers of $n$ from the two
falling factorials gives \eqref{eq:LLW-exact-comparison}.  For fixed $k$, put $S_q=\sum_{j=0}^{k-1}j^q$ and
$T_q=\sum_{r=0}^{kn-1}r^q$.  Expanding the three finite sums gives
\begin{align*}
n\sum_{j<k}\log(1-j/n)
&=-S_1-\frac{S_2}{2n}-\frac{S_3}{3n^2}+O_k(n^{-3}),\\
-\sum_{r<kn}\log(1-r/n^2)
&=\frac{T_1}{n^2}+\frac{T_2}{2n^4}
  +\frac{T_3}{3n^6}+O_k(n^{-3}),\\
-\frac12\sum_{j<k}\frac{n}{n-j}
&=-\frac{k}{2}-\frac{S_1}{2n}-\frac{S_2}{2n^2}
  +O_k(n^{-3}).
\end{align*}
Using the standard power-sum formulae for $S_q$ and $T_q$, the constant
terms cancel and the coefficients of $n^{-1}$ and $n^{-2}$ are respectively
$-k/3$ and $-k(k+1)/12$.  This proves \eqref{eq:LLW-fixed-k}.
\end{proof}

\begin{remark}[Checks at small height]\label{rem:small-height-checks}
At $k=0$ and $k=1$ one has
\[
L_{0,n}=\widetilde A_{0,n}=1,
\qquad
L_{1,n}=\widetilde A_{1,n}=n!
\]
exactly.  At $k=2$, write $D_n$ for the number of derangements of $[n]$.
Then $L_{2,n}=n!D_n$ and
$D_n=(n!/e)(1+O((n+1)!^{-1}))$.  Direct expansion of
\eqref{eq:Atilde-def} gives
\begin{equation}\label{eq:k2-check}
\log\frac{L_{2,n}}{\widetilde A_{2,n}}
=-\frac1{6n^2}-\frac1{4n^3}-\frac3{10n^4}+O(n^{-5}).
\end{equation}
Thus the $n^{-2}$ scale in \eqref{eq:main-tilde} already occurs at height
two.
\end{remark}

The first ingredient is a permanent approximation adapted to matrices whose deviations from the all-ones matrix have small row and column $\ell_1$-norms.  If $E$ has zero row and column sums, bounded entries, and maximum row or column $\ell_1$-norm $s(E)=o(n)$, then
\begin{equation}\label{eq:permanent-intro}
\log\frac{\per(\Jall+E)}{n!}
=-\frac12\log\det\!\left(I-\frac{E^{\mathsf T}E}{n^2}\right)
+O\!\left(\frac{s(E)}{n^2}\right).
\end{equation}
The proof uses an exact Gamma representation for the permanent, a convergent hard-core matching expansion, and a complete separation of the cycle sector from all connected incidence graphs of positive excess.  Under the normalization $A=(\Jall+E)/n$, the determinant in
\eqref{eq:permanent-intro} is exactly McCullagh's leading determinant
\cite{McCullagh}.  Except for the standard Penrose tree--graph inequality quoted in \eqref{eq:tree-graph}, all ingredients needed for the class-uniform remainder of order $s(E)/n^2$ are proved here.

The second ingredient is a fixed-anchor sampling of the standard incomplete two-column cycle/path switch for Latin rectangles.  The switching gives a linear exchangeable-pair regression for every two-column codegree and the upper bound
\[
\operatorname{Var}Z
\le \frac{m(m-1)(n-m)}{(n-1)^2}.
\]
An exact codegree--spectral identity then yields
\[
\E\tr\bigl((C^{\mathsf T}C)^2\bigr)=O(nm^2),
\]
where $C$ is the centred $m$-row column--symbol incidence matrix.

For a residual graph at height $m=o(n)$, the deviation matrix in \eqref{eq:permanent-intro} has line sum $2m$.  Its determinant has a deterministic first trace $m/(n-m)$, while the switching estimate shows that the expected nonlinear trace-log remainder is $O(m^2/n^3)$.  We obtain
\[
\frac{L_{m+1,n}}{L_{m,n}}
=n!\left(1-\frac mn\right)^n
 \exp\!\left(\frac{m}{2(n-m)}\right)
 \left[1+O\!\left(\frac{m}{n^2}\right)\right]
\]
uniformly for $m$ in any sublinear range.  Multiplication over $m=0,\dots,k-1$ proves \cref{thm:main}.

\subsection{Formalisation}\label{sec:formalisation}

Every mathematical result proved in this paper has been formalised in Lean
using mathlib.  The development verifies the quantitative sublinear
enumeration theorem and its relative forms together with the supporting
permanent, cluster-expansion, switching, spectral, extension, and finite-height
results.  The verification is unconditional relative to Lean's standard
classical foundations: every intermediate interface introduced by the
formalisation is discharged internally, and the development contains no
admitted proof or project-defined axiom.  The source, build instructions, and
kernel-audit procedure are available in the accompanying repository
\cite{LiLean725}.

\section{A permanent theorem for small row and column line norms}\label{sec:permanent}

Let $\Jall$ denote the $n\times n$ all-ones matrix.  For a real $n\times n$ matrix $E$, define
\begin{equation}\label{eq:line-norm}
s(E)=\max\left\{
\max_i\sum_j\abs{E_{ij}},
\max_j\sum_i\abs{E_{ij}}
\right\}.
\end{equation}

\begin{theorem}[Small-line-norm permanent approximation]\label{thm:small-line-permanent}
For every $0\le C_0<\infty$ there is a constant
$C_{\mathrm{perm}}=C_{\mathrm{perm}}(C_0)<\infty$ with the following
property.  For every sequence $\eta_\bullet=(\eta_n)_{n\ge1}$ satisfying
$\eta_n\ge0$ and $\eta_n\to0$, there is an index
$n_0=n_0(C_0,\eta_\bullet)$ such that, for every $n\ge n_0$ and every real
$n\times n$ matrix $E$ satisfying
\begin{equation}\label{eq:permanent-hypotheses}
E\1=E^{\mathsf T}\1=0,
\qquad
\Jall+E\ge0\ \text{entrywise},
\qquad
\max_{i,j}\abs{E_{ij}}\le C_0,
\qquad
s(E)\le \eta_n n,
\end{equation}
one has
\begin{equation}\label{eq:small-line-permanent}
\left|
\log\frac{\per(\Jall+E)}{n!}
+\frac12\log\det\!\left(I-\frac{E^{\mathsf T}E}{n^2}\right)
\right|
\le C_{\mathrm{perm}}\frac{s(E)}{n^2}.
\end{equation}
The constant $C_{\mathrm{perm}}$ is independent of the sequence
$\eta_\bullet$, of $n$, and of the particular matrix $E$; the threshold
$n_0$ may depend on $C_0$ and $\eta_\bullet$.
\end{theorem}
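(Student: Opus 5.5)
The plan follows the route announced in the introduction: an exact Gamma representation of the permanent, a hard-core matching (polymer) expansion, and a separation of the cycle sector from connected graphs of positive excess. Write $M=\Jall+E\ge0$. Introduce i.i.d. standard exponential weights $g_1,\dots,g_n$ (one per column) and use the identity
\[
\per(M)=\E\prod_{i=1}^n\Bigl(\sum_j M_{ij}g_j\Bigr)\Big/\ \text{(normalizing factor)},
\]
or, more symmetrically, expand $\per(\Jall+E)=\sum_{F}\per_{\text{partial}}$ over partial matchings. Concretely,
\[
\per(\Jall+E)=\sum_{r\ge0}(n-r)!\sum_{|S|=|T|=r}\per(E_{S,T}),
\]
so that
\[
\frac{\per(\Jall+E)}{n!}=\sum_{\sigma\ \text{partial matching}}\frac{(n-r)!}{n!}\prod_{(i,j)\in\sigma}E_{ij}.
\]
The factor $(n-r)!/n!$ is where the Gamma representation enters: $(n-r)!/n!=\int$ of a product kernel. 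This lets me write the ratio as a hard-core gas of edges of $K_{n,n}$, with activity $E_{ij}/n$ and a weak multi-body correction $\prod_{\ell<r}(1-\ell/n)^{-1}$.

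Next I take logarithms using the cluster (Mayer) expansion for this hard-core gas. Polymers are matching edges with weight $w_{ij}\approx E_{ij}/n$, and two edges are incompatible when they share a row or column. The Kotecký–Preiss criterion holds for $n\ge n_0$, because $\sum_j|w_{ij}|\le s(E)/n\le\eta_n\to0$. The Penrose tree–graph inequality \eqref{eq:tree-graph} then bounds the connected clusters: a cluster whose incidence graph has $v$ vertices and $e$ edges carries weight at most about $(s(E)/n)^{e}\cdot n^{v-e}$ after summing positions. I then split clusters by excess $e-v$.
\begin{itemize}[label={}]
\item \emph{Trees (excess $-1$).} These vanish, or are $O(s/n^2)$, because $E\1=E^{\mathsf T}\1=0$ kills every leaf sum. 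This is exactly why the zero line sums are needed.
\item \emph{Unicyclic clusters (excess $0$).} The pure alternating even cycles $i_1j_1i_2j_2\cdots$ resum to $\sum_{\ell}\frac{1}{2\ell}\tr\bigl((E^{\mathsf T}E/n^2)^\ell\bigr)=-\frac12\log\det(I-E^{\mathsf T}E/n^2)$. Cycles decorated by trees again vanish or are lower order by the line-sum cancellation.
\item \emph{Positive excess.} Each additional independent cycle costs a factor $s(E)/n\cdot 1/n$ relative to the cycle sector. The total contribution is therefore $O(s(E)/n^2)$, with a constant depending only on $C_0$ (through $|E_{ij}|\le C_0$).
\end{itemize}
The Gamma/factorial corrections $(1-\ell/n)^{-1}$ also act as extra vertices of order $1/n$. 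They either combine with the cycle sum (for example, the first trace $\tr(E^{\mathsf T}E)/n^2\le s^2/n^2\cdot n/n$) or are absorbed into the $O(s/n^2)$ term. I will check this carefully.

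The main obstacle is the class-uniform remainder: showing that everything outside the pure cycle sector is $O(s(E)/n^2)$ rather than $O(s^2/n^2)$ or $O(1/n)$. The difficulty is that $\tr(E^{\mathsf T}E)/n^2$ is itself only $O(s/n)$, so first-order tree-decorated corrections must cancel exactly. My approach is to perform the partial resummation by an exact Gaussian/Gamma integral. I would write $\per(\Jall+E)/n!$ as an integral over $g$ of $\prod_i(1+(Eg)_i/\sum_j g_j)$, expand $\log(1+x)$, and concentrate $g$ near $\1$. The quadratic form $\sum_i (Eg)_i^2$ then produces the determinant, and the cubic and higher terms are bounded using $\max|E_{ij}|\le C_0$ and $s(E)$. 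I would control the integral through the convergent expansion, with the tree-graph inequality supplying absolute convergence uniformly in $E$.
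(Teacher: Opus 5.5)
Your outline names the right ingredients, but the step that carries the uniform $O(s/n^2)$ error is deferred (``I will check this carefully''), and the mechanism you suggest for it fails.

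\textbf{The factorial weight is not a polymer activity.} The weight $(n-r)!/n!=n^{-r}\prod_{\ell<r}(1-\ell/n)^{-1}$ depends on the \emph{total} number of polymers. The correction factor is not even uniformly close to $1$: at $r=n$ it equals $n^n/n!$, which is of order $e^n$. So a ``hard-core gas with activity $E_{ij}/n$ and a weak multi-body correction'' is not a polymer system to which Koteck\'y--Preiss or the Penrose bound \eqref{eq:tree-graph} applies. These factors cannot be absorbed as extra cluster vertices.

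The Gamma identity has to be used as a \emph{mixture}: $\per(\Jall+E)/n!=\E Z_E(T^{-1})$ with $T\sim\Gamma(n+1,1)$. For each fixed $t\ge n/2$, $Z_E(1/t)$ is a genuine hard-core gas with activities $E_{ij}/t$. The cluster analysis then gives $\log Z_E(1/t)=D(t)+O_{C_0}(s/n^2)$ uniformly in such $t$, where $D(t)=-\frac12\log\det(I-E^{\mathsf T}E/t^2)$. You must then average $e^{D(T)+O(s/n^2)}$ over $T$ \emph{outside} the logarithm. The bounds $D'(n)=O(s/n^2)$ and $D''=O(s/n^3)$ on $[n/2,\infty)$, together with $\E(T-n)=1$ and $\E(T-n)^2=n+2$, move $D(T)$ to $D(n)$ at cost $O(s/n^2)$. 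The event $\{T<n/2\}$, where the expansion need not converge, needs its own exponentially small bound.

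\textbf{The fallback integral is not the permanent.} For real i.i.d.\ exponential $g$, $\E\prod_i(\sum_jM_{ij}g_j)$ runs over all maps $[n]\to[n]$ with factorial multiplicity weights, so it is not proportional to $\per(M)$ (compare $M=I$ with $M=\Jall$). Already for $n=2$ and $E=a\bigl(\begin{smallmatrix}1&-1\\-1&1\end{smallmatrix}\bigr)$ one gets $\E\prod_i\bigl(1+(Eg)_i/\sum_jg_j\bigr)=1-a^2/3$, whereas $\per(\Jall+E)/2!=1+a^2$. Correspondingly, Gaussian resummation of $-\frac12\norm{Eg}^2/n^2$ would produce $\det(I+E^{\mathsf T}E/n^2)^{-1/2}$, which has the wrong sign. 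A correct integral representation needs coefficient extraction over complex (torus) variables. That is a saddle-point analysis, quite different from what you describe.

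\textbf{Leaf cancellation needs free label sums.} Leaf sums vanish only when labels are summed \emph{freely}. In the Ursell-function expansion, the row and column labels of a cluster are tied to its incompatibility pattern, so a leaf label cannot simply be summed over $[n]$. The paper obtains free sums by M\"obius inversion on the partition lattice. The coefficient $[z^r]\log Z_E$ becomes a sum over connected pairs $(\rho,\sigma)$ of partitions of $[r]$, weighted by $\mu_{\Pi}(\rho)\mu_{\Pi}(\sigma)E^\rho_\sigma$ with unrestricted labels. Any singleton block then yields a zero line sum, and incidence excess zero is exactly the alternating-cycle sector.

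\textbf{Your size heuristic is off.} With entries bounded only by $C_0$, the correct estimate is $\abs{E^\rho_\sigma}\le n\,s^{v-1}\max(1,C_0)^{g+1}$. So the cycle sector is of order $s/n$, not the $s^2/n^2$ your first-trace bound asserts; for the Latin residual matrices, $\tr(E^{\mathsf T}E)/n^2=m/d$ while $s=2m$. Moreover, counting partition pairs costs roughly a factor $r^3/n$ per unit of excess. That bound is only useful while $r^3\ll n$, so the paper truncates at degree $R=O(\log n)$. It bounds the cluster remainder by the Penrose tail $4W(e\Delta)^R\le 8s\,n^{-4}$ and bounds the cycle-series remainder separately. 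Your remark that ``each additional cycle costs $s/n\cdot1/n$'' supplies none of this.
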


\begin{remark}[Comparison with McCullagh's approximation]\label{rem:permanent-comparison}
Set
\[
A=\frac{\Jall+E}{n},
\qquad
\Proj=\frac{\Jall}{n}.
\]
Because $E\1=E^{\mathsf T}\1=0$, one has
\[
A=\Proj+\frac En,
\qquad
I+\Proj-A^{\mathsf T}A
=I-\frac{E^{\mathsf T}E}{n^2}.
\]
Thus the determinant in \cref{thm:small-line-permanent} is exactly
McCullagh's leading determinant under the present normalization
\cite{McCullagh}.  The connected pair-of-partitions representation below is
also closely related to McCullagh's partition expansion, and his
pair-partition contribution corresponds to our incidence-excess-zero cycle
sector.

Under these bounded-entry and small-line-norm assumptions, the normalized
matrices have bounded entry deviations and $o(1)$ spectral deviation; away
from boundary zeros they therefore lie in McCullagh's moderate-deviation
regime, which gives an $O(n^{-1})$ approximation along such sequences.
McCullagh also discusses scalable nonnegative matrices containing zeros.  The
new assertion required here is the sharper class-uniform remainder
$O_{C_0}(s(E)/n^2)$, together with a proof that handles the present zero
entries directly.  Except for the standard Penrose tree--graph inequality in
\eqref{eq:tree-graph}, the ingredients used to obtain that class-uniform
remainder are proved below: the Gamma representation, the absolute control of
all positive-incidence-excess contractions, and the Gamma averaging.
\end{remark}

We write $s=s(E)$ and $\Lambda_0=\max\{1,C_0\}$.  The case $s=0$ is
immediate, so we assume $s>0$.  The matrix $(\Jall+E)/n$ is nonnegative and
doubly stochastic.  Its support satisfies Hall's condition.  If $I$ is a set of rows,
$N(I)$ is the set of columns meeting a positive entry in those rows, and
$P=(\Jall+E)/n$, then
\[
|I|
=\sum_{i\in I}\sum_j P_{ij}
=\sum_{j\in N(I)}\sum_{i\in I}P_{ij}
\le\sum_{j\in N(I)}\sum_iP_{ij}
=|N(I)|.
\]
Hence the support contains a perfect matching and
$\per(\Jall+E)>0$, so the real logarithm in
\eqref{eq:small-line-permanent} is defined.  Moreover,
\[
\norm{E}_{\op}\le
\sqrt{\norm{E}_1\norm{E}_\infty}\le s=o(n),
\]
so every eigenvalue of $E^{\mathsf T}E/n^2$ is strictly less than one for
all sufficiently large $n$.  The determinant in
\eqref{eq:small-line-permanent} is therefore positive.

\subsection{An exact Gamma representation}

Let $\cM$ be the family of all matchings in $K_{n,n}$, including the empty matching, and put
\begin{equation}\label{eq:ZE-def}
Z_E(z)=\sum_{M\in\cM}z^{\abs M}\prod_{(i,j)\in M}E_{ij}.
\end{equation}

\begin{lemma}[Gamma representation]\label{lem:gamma-representation}
Let $T$ have the Gamma distribution with shape $n+1$ and rate $1$, so that its density is $t^ne^{-t}/n!$ on $(0,\infty)$.  Then
\begin{equation}\label{eq:gamma-representation}
\frac{\per(\Jall+E)}{n!}=\E Z_E(T^{-1}).
\end{equation}
\end{lemma}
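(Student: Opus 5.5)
The plan is to expand the permanent of $\Jall+E$ over the subset of positions at which the $E$-part is chosen, and then reduce the resulting counting factor to a Gamma moment.

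First, write $\per(\Jall+E)=\sum_{\sigma}\prod_i(1+E_{i\sigma(i)})$ and expand each product as a sum over subsets $S\subseteq[n]$ of rows. For each $\sigma$ and $S$ the chosen entries $\{(i,\sigma(i)):i\in S\}$ form a matching $M$ of $K_{n,n}$ with $\abs M=\abs S$, and the term contributed is $\prod_{(i,j)\in M}E_{ij}$.

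Second, regroup by $M$. A given matching $M$ of size $r$ arises from exactly those permutations $\sigma$ that extend it, and there are $(n-r)!$ of these. Hence
\[
\per(\Jall+E)=\sum_{M\in\cM}(n-\abs M)!\prod_{(i,j)\in M}E_{ij}.
\]
This is an exact, finite identity; no sign or convergence issues arise, since the sum is finite.

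Third, convert the factorial ratio into a Gamma moment. For $0\le r\le n$,
\[
\E T^{-r}=\frac1{n!}\int_0^\infty t^{n-r}e^{-t}\,dt=\frac{(n-r)!}{n!},
\]
and this is finite precisely because $r\le n$. Dividing the displayed identity by $n!$ and using linearity of expectation over the finitely many matchings gives
\[
\frac{\per(\Jall+E)}{n!}=\sum_{M\in\cM}\E\bigl[T^{-\abs M}\bigr]\prod_{(i,j)\in M}E_{ij}=\E Z_E(T^{-1}),
\]
which is \eqref{eq:gamma-representation}.

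There is no real obstacle. The only points needing care are the bookkeeping in the second step, namely that the correspondence between pairs $(\sigma,S)$ and pairs ($M$, extension of $M$ to a permutation) is a bijection, and the observation that every moment $\E T^{-r}$ used is finite since matchings have size at most $n$.
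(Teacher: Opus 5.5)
Your proof is correct and is the same argument as the paper's. Both expand the permanent over matchings of $E$-entries, count the $(n-r)!$ completions by $\Jall$-entries, and identify $(n-r)!/n!$ with $\E T^{-r}$ for $0\le r\le n$.
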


\begin{proof}
If $M$ is a matching of size $r$, choosing the entries of $E$ precisely on $M$ leaves $(n-r)!$ ways to complete a permutation using entries of $\Jall$.  Therefore
\[
\per(\Jall+E)
=\sum_{M\in\cM}(n-\abs M)!\prod_{e\in M}E_e.
\]
For $0\le r\le n$,
\[
\E T^{-r}
=\frac1{n!}\int_0^\infty t^{n-r}e^{-t}\,dt
=\frac{(n-r)!}{n!}.
\]
Substitution gives \eqref{eq:gamma-representation}.
\end{proof}

\subsection{The matching cluster expansion}

Regard the cells $(i,j)\in[n]^2$ as polymers.  Two polymers are incompatible if they share a row or a column; a polymer is incompatible with itself.  Give cell $(i,j)$ activity $w_{ij}$.  The corresponding hard-core partition function is
\[
Z(w)=\sum_{M\in\cM}\prod_{e\in M}w_e.
\]
For an ordered $r$-tuple $\boldsymbol e=(e_1,\dots,e_r)$, let
\[
\phi^T(\boldsymbol e)
=\sum_{G\in\mathcal C_r}
  \prod_{\{a,b\}\in E(G)}
  \bigl(-\1_{\{e_a\text{ incompatible with }e_b\}}\bigr),
\]
where $\mathcal C_r$ is the set of connected simple graphs on $[r]$; for $r=1$ put $\phi^T(e_1)=1$.

The incompatibility graph in this definition is a graph on the tuple
positions $[r]$.  In particular, two positions carrying the same cell are
adjacent because a polymer is incompatible with itself.  We use the standard
Penrose tree-graph inequality
\begin{equation}\label{eq:tree-graph}
\abs{\phi^T(e_1,\dots,e_r)}
\le
\sum_{\tau\in\mathcal T_r}
\prod_{\{a,b\}\in E(\tau)}
\1_{\{e_a\text{ incompatible with }e_b\}},
\end{equation}
where $\mathcal T_r$ is the set of labelled trees on $[r]$.  This is the
hard-core specialization of the Penrose identity; see
\cite[Proposition~5 and equations~(4.3)--(4.5)]{FernandezProcacci}.  The
following elementary consequence records the precise signed/complex version
used here.

\begin{lemma}[Absolute cluster bound]\label{lem:cluster-bound}
Put
\[
W=\sum_e\abs{w_e},
\qquad
\Delta=\max_e\sum_{f:\,f\text{ incompatible with }e}\abs{w_f}.
\]
If $e\Delta<1$, then
\begin{equation}\label{eq:cluster-series}
\log Z(w)
=\sum_{r\ge1}\frac1{r!}
 \sum_{e_1,\dots,e_r}
 \phi^T(e_1,\dots,e_r)\prod_{a=1}^rw_{e_a},
\end{equation}
and the series converges absolutely.  Moreover, if $R\ge1$ and $e\Delta\le1/2$, then
\begin{equation}\label{eq:cluster-tail}
\sum_{r>R}\frac1{r!}
 \sum_{e_1,\dots,e_r}
 \abs{\phi^T(e_1,\dots,e_r)}
 \prod_{a=1}^r\abs{w_{e_a}}
\le 4W(e\Delta)^R.
\end{equation}
\end{lemma}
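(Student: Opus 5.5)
The plan is to bound the tree sum in \eqref{eq:tree-graph} by rooting each tree and summing over the cells one leaf at a time. First we get a tail bound from this, and then we obtain the series identity \eqref{eq:cluster-series} from standard cluster-expansion convergence.

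\textbf{Step 1: the rooted tree sum.} Fix $r\ge1$ and a labelled tree $\tau\in\mathcal T_r$, and root it at position $1$. Summing over $e_1,\dots,e_r$ with weights $\prod_a\abs{w_{e_a}}$, and inserting the incompatibility indicators along the edges of $\tau$, we integrate out the leaves one at a time, starting with those farthest from the root. Each non-root position $a$ has a parent $p(a)$, and the sum over $e_a$ of $\abs{w_{e_a}}\1_{\{e_a\text{ incompatible with }e_{p(a)}\}}$ is at most $\Delta$. Self-incompatibility is already included in the definition of $\Delta$, so repeated cells cause no problem. The root contributes $W$. Hence every tree contributes at most $W\Delta^{r-1}$. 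By Cayley's formula $\abs{\mathcal T_r}=r^{r-2}$, so
\[
\frac1{r!}\sum_{e_1,\dots,e_r}\abs{\phi^T}\prod_a\abs{w_{e_a}}
\le \frac{r^{r-2}}{r!}W\Delta^{r-1}\le W\frac{(e\Delta)^{r-1}}{r^{2}}\cdot e^{0}\cdots,
\]
where the last step uses $r^{r}/r!\le e^{r}$, that is, $r^{r-2}/r!\le e^{r}/r^2$. This gives the cleaner bound $W e (e\Delta)^{r-1}/r^2\le e\,W(e\Delta)^{r-1}$.

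\textbf{Step 2: the tail.} For $e\Delta\le 1/2$ and $R\ge1$, we sum the bound from Step 1 over $r>R$:
\[
\sum_{r>R} e\,W(e\Delta)^{r-1}/r^2
\le \frac{e\,W(e\Delta)^{R}}{(R+1)^2}\cdot\frac1{1-e\Delta}
\le \frac{2e}{4}W(e\Delta)^R\le 4W(e\Delta)^R.
\]
This is \eqref{eq:cluster-tail}, with room to spare. The same estimate with $e\Delta<1$ gives absolute convergence of the right-hand side of \eqref{eq:cluster-series}.

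\textbf{Step 3: the identity \eqref{eq:cluster-series}.} This is the main point, since the activities here are signed or complex. Let $F(w)$ denote the series on the right-hand side. As formal power series in the variables $w_e$, $\log Z=F$ is the standard Mayer identity. One way to see it is to expand $Z=\sum_N\frac1{N!}\sum_{e_1,\dots,e_N}\prod_{a<b}(1-\1_{\text{inc}})\prod w$, write each product over pairs as a sum over graphs, and apply the exponential formula over connected components.

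Both sides are analytic on the polydisc $D=\{w:e\Delta(\abs w)<1\}$. The right-hand side is analytic there because its majorant converges locally uniformly, by Steps 1 and 2. The partition function $Z$ is a polynomial, and $Z=\exp F$ holds near $w=0$ by the formal identity. Since $D$ is connected and contains $0$, the identity theorem extends $Z=\exp F$ to all of $D$; in particular $Z\ne0$ on $D$. Thus $F$ is a continuous logarithm of $Z$ on the star-shaped region $D$, taking the value $0$ at $w=0$. Along the segment $tw$ with $t\in[0,1]$ it is the branch obtained by continuation.

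For real activities this gives exactly the real logarithm whenever $Z>0$, which is the case used later. The only delicate issue is the choice of branch, and continuation along $t\mapsto tw$ from $0$ settles it, because $e\Delta(t\abs w)\le e\Delta(\abs w)<1$ for every $t\in[0,1]$.
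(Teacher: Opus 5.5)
Your proposal is correct and is essentially the paper's proof. Both use the Penrose tree bound with leaf-by-leaf summation and Cayley's formula, the estimate $r^{r-2}/r!\le e^r/r^2$ feeding a geometric tail, and the formal exponential-formula identity upgraded to an analytic one by the identity theorem. Your tail constant $e/2$ is slightly better than the paper's $2$. Your first display in Step~1 drops a factor $e$, but the sentence after it states the correct bound $\frac{e}{r^2}W(e\Delta)^{r-1}$.

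The only real variation is in the continuation step. You continue analytically in all activities jointly on $\{w: e\Delta(\abs{w})<1\}$, whereas the paper continues in the single parameter $\lambda$ on the disc $\abs{\lambda}<(e\Delta)^{-1}$. That set is convex rather than a polydisc, but connectedness and star-shapedness are all you use, so the argument goes through. Finally, since $F$ is real for real activities, you get $Z=e^{F}>0$ unconditionally, not merely ``whenever $Z>0$''.
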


\begin{proof}
For $r=1$ the absolute cluster contribution is $W$.  For $r\ge2$, fix a tree, choose a root polymer in total weight $W$, and sum the remaining vertices away from the root.  Every tree edge costs at most a factor $\Delta$.  By \eqref{eq:tree-graph} and Cayley's formula,
\begin{equation}\label{eq:tree-sum-bound}
\frac1{r!}\sum_{e_1,\dots,e_r}
\abs{\phi^T(e_1,\dots,e_r)}
\prod_{a=1}^r\abs{w_{e_a}}
\le \frac{r^{r-2}}{r!}W\Delta^{r-1}.
\end{equation}
The right side is summable when $e\Delta<1$, because $r!\ge(r/e)^r$.
For completeness, the formal connected-graph identity can be seen directly.
Set
\[
f(e_a,e_b)=-\1_{\{e_a\text{ incompatible with }e_b\}}.
\]
Because self-incompatibility makes repeated cells incompatible, the hard-core
partition function has the exact ordered-tuple representation
\[
Z(\lambda w)
=1+\sum_{r\ge1}\frac{\lambda^r}{r!}
 \sum_{e_1,\dots,e_r}
 \prod_{a=1}^r w_{e_a}
 \prod_{1\le a<b\le r}\bigl(1+f(e_a,e_b)\bigr).
\]
Expanding the last product sums over all simple graphs on the tuple positions.
Every such graph decomposes uniquely into its connected components, and both
the edge factor and the activity product factor over those components.  The
labelled exponential formula therefore gives, coefficient by coefficient,
$Z(\lambda w)=\exp F(\lambda)$, where
\[
F(\lambda)=\sum_{r\ge1}\frac{\lambda^r}{r!}
 \sum_{e_1,\dots,e_r}\phi^T(e_1,\dots,e_r)\prod_{a=1}^rw_{e_a}.
\]
For complex $\lambda$, the preceding absolute estimate shows that the series
defining $F$ converges normally on every compact subset of
$\abs\lambda<(e\Delta)^{-1}$.  Thus the formal identity is an analytic
identity near zero, and the identity theorem extends it throughout that disk.
In particular, for real activities and $\lambda=1$, $F(1)$ is real and
$Z(w)=e^{F(1)}>0$; the logarithm in \eqref{eq:cluster-series} is therefore
the real branch obtained continuously from $Z(0)=1$.

For $r\ge2$, the elementary bound $r^{r-2}/r!\le e^{r-1}$ follows from
$r!\ge(r/e)^r$ and $r^2\ge e$.  Hence, when $e\Delta\le1/2$,
\[
\sum_{r>R}\frac{r^{r-2}}{r!}W\Delta^{r-1}
\le W\sum_{r>R}(e\Delta)^{r-1}
\le2W(e\Delta)^R,
\]
which is stronger than \eqref{eq:cluster-tail}.
\end{proof}

For the activities $w_{ij}=E_{ij}/t$, with $t\ge n/2$, one has
\begin{equation}\label{eq:WDelta}
W\le\frac{ns}{t}\le2s,
\qquad
\Delta\le\frac{2s}{t}\le\frac{4s}{n}=o(1).
\end{equation}
Thus the cluster expansion is absolutely convergent, uniformly for $t\ge n/2$, once $n$ is sufficiently large.

\subsection{A connected partition formula}

Let $\Pi_r$ be the lattice of partitions of $[r]$, with least and greatest elements $\widehat0$ and $\widehat1$.  Put
\begin{equation}\label{eq:mobius-weight}
\mu_{\Pi}(\rho)=\prod_{B\in\rho}(-1)^{\abs B-1}(\abs B-1)!.
\end{equation}
For $\rho,\sigma\in\Pi_r$, define
\begin{equation}\label{eq:contraction}
E^\rho_\sigma
=\sum_{\alpha:\rho\to[n]}
 \sum_{\beta:\sigma\to[n]}
 \prod_{\ell=1}^r
 E_{\alpha(B_\rho(\ell)),\,\beta(B_\sigma(\ell))},
\end{equation}
where $B_\rho(\ell)$ is the block of $\rho$ containing $\ell$.

\begin{lemma}[Connected partition coefficients]\label{lem:connected-partition}
As a formal power series at the origin,
\begin{equation}\label{eq:connected-partition}
\log Z_E(z)
=\sum_{r\ge1}\frac{z^r}{r!}
 \sum_{\substack{\rho,\sigma\in\Pi_r\\
                   \rho\vee\sigma=\widehat1}}
 \mu_{\Pi}(\rho)\mu_{\Pi}(\sigma)E^\rho_\sigma.
\end{equation}
Equivalently, the coefficient of $z^r$ is the displayed finite sum for every
$r\ge1$.  If either $\rho$ or $\sigma$ has a singleton block, then
$E^\rho_\sigma=0$.
\end{lemma}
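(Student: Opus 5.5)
The plan is to derive \eqref{eq:connected-partition} from the ordered-tuple cluster expansion of \cref{lem:cluster-bound}, applied with activities $w_{ij}=zE_{ij}$ and read as an identity of formal power series in $z$. The exponential-formula argument in that proof is purely formal, so for each $r\ge1$ the coefficient of $z^r$ in $\log Z_E(z)$ equals
\[
\frac1{r!}\sum_{e_1,\dots,e_r}\phi^T(e_1,\dots,e_r)\prod_{a=1}^rE_{e_a}.
\]
Write $e_\ell=(i_\ell,j_\ell)$. A tuple of cells is the same thing as a pair of maps $i,j:[r]\to[n]$. Let $\ker i\in\Pi_r$ denote the partition of $[r]$ into the level sets of $i$, and define $\ker j$ similarly. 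Positions $a,b$ are incompatible exactly when $i_a=i_b$ or $j_a=j_b$. So the incompatibility graph is the union of two graphs: the cliques on the blocks of $\ker i$ and the cliques on the blocks of $\ker j$.

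The first step is the combinatorial identity
\[
\phi^T(e_1,\dots,e_r)=\sum_{\substack{\rho\le\ker i,\ \sigma\le\ker j\\ \rho\vee\sigma=\widehat1}}\mu_\Pi(\rho)\mu_\Pi(\sigma).
\]
I would prove this by Möbius inversion in one of two ways.

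\emph{Route (a).} The sum over connected graphs $\sum_{G\text{ conn}}(-1)^{|E(G)|}$ inside a graph $H$ is the Möbius-type invariant $\sum_{\pi}\mu(\pi,\widehat1)\,[\text{each block of }\pi\ \ldots]$. Concretely, $\sum_{G\subseteq H}(-1)^{|E(G)|}\1[\text{$G$ has component partition }\pi]$ can be evaluated by inclusion–exclusion over the lattice.

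\emph{Route (b), which I would pursue first.} Work from the other side. Using $1+f=\1[i_a\ne i_b]\1[j_a\ne j_b]$, the total graph sum equals $\prod_{a<b}(1+f_{ab})$. Write each injectivity indicator by Möbius inversion:
\[
\1[i\text{ injective}]=\sum_{\rho\le\ker i}\mu_\Pi(\rho),
\]
using $\mu(\widehat0,\rho)=\mu_\Pi(\rho)$. The same holds for $j$. Then, for every subset $S\subseteq[r]$, the unconnected sum is
\[
\sum_{\rho\le\ker i|_S,\ \sigma\le\ker j|_S}\mu_\Pi(\rho)\mu_\Pi(\sigma).
\]
Both $\mu_\Pi$ and the relation $\rho\le\ker i$ factor over the connected components of $\rho\vee\sigma$. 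The standard exponential-formula/Möbius inversion over set partitions therefore extracts the connected part as the subsum with $\rho\vee\sigma=\widehat1$. This matches the definition of $\phi^T$ as the connected part of $\prod(1+f)$.

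Next, I substitute this identity and interchange the sums. For fixed $\rho,\sigma$, summing over all $i,j$ with $\rho\le\ker i$ and $\sigma\le\ker j$ is the same as summing over $\alpha:\rho\to[n]$ and $\beta:\sigma\to[n]$, via $i_\ell=\alpha(B_\rho(\ell))$ and $j_\ell=\beta(B_\sigma(\ell))$. The product $\prod_\ell E_{i_\ell j_\ell}$ then becomes exactly $E^\rho_\sigma$ from \eqref{eq:contraction}, which gives \eqref{eq:connected-partition}. The vanishing claim is immediate. If $\{\ell\}$ is a singleton block of $\rho$, then $\alpha(\{\ell\})$ occurs in only one factor, and summing over it gives the column sum $\sum_iE_{i,\beta(\cdot)}=0$ because $E^{\mathsf T}\1=0$. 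A singleton block of $\sigma$ is handled the same way using $E\1=0$.

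The main obstacle is the connected-part step, namely justifying that the connected extraction of the product of two independent Möbius sums is the constraint $\rho\vee\sigma=\widehat1$. I would verify this carefully through the multiplicativity of the pair weights over the blocks of $\rho\vee\sigma$, combined with the labelled exponential formula. Everything else is bookkeeping.
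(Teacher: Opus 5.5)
Your argument is correct, but it takes a different route from the paper's.

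\textbf{The paper's route.} The paper never uses $\phi^T$ in this lemma. It Möbius-inverts the row and column distinctness constraints directly in the matching sum, obtaining the polynomial identity \eqref{eq:ZE-partitions}. It then groups pairs $(\rho,\sigma)$ by $\tau=\rho\vee\sigma$. It checks that both the Möbius weights and the label-summed contractions $E^\rho_\sigma$ factor over the blocks of $\tau$, noting that labels in different components may coincide. A single application of the exponential formula then takes the logarithm.

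\textbf{Your route.} You start from the ordered-tuple Mayer expansion, whose formal part in \cref{lem:cluster-bound} indeed needs no smallness hypothesis. You then extract the connected part pointwise, for a fixed tuple of cells, before any summation over labels. The step you flag as the main obstacle goes through as follows. Fix $S\subseteq[r]$ and let $\psi(S)$ denote the connected pair-partition sum with $\rho\le\ker i|_S$ and $\sigma\le\ker j|_S$. The product $\prod_{a<b\in S}(1+f_{ab})$ then equals both
\[
\sum_{\pi\in\Pi(S)}\prod_{B\in\pi}\phi^T(e|_B)
\qquad\text{and}\qquad
\sum_{\tau\in\Pi(S)}\prod_{B\in\tau}\psi(B).
\]
The first comes from decomposing graphs into components. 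The second comes from decomposing pairs by their join, since $\rho\le\ker i$ restricts blockwise. Induction on $\abs S$ then forces $\phi^T(e|_S)=\psi(S)$. The interchange, using that $i$ is constant on the blocks of $\rho$ exactly when $i=\alpha\circ B_\rho$, produces $E^\rho_\sigma$. Route (a) as written is only a gesture and can be dropped.

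\textbf{Trade-offs.} Your route identifies the Ursell function term by term with the connected pair-partition sum, so the coefficients $c_r$ in \cref{prop:fixed-t} are visibly the same object in both expansions. Because the factorization is done at fixed labels, the cross-component label-coincidence point never arises. The cost is dependence on the formal part of \cref{lem:cluster-bound}, and running the connected-part extraction twice. The paper's proof is self-contained, needs only one exponential-formula step, and yields \eqref{eq:ZE-partitions} as a by-product.
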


\begin{proof}
For a tuple $\boldsymbol i=(i_1,\dots,i_r)$, let $\ker\boldsymbol i$ be its
equality partition.  M\"obius inversion on $\Pi_r$ gives
\[
\1_{\{i_1,\dots,i_r\text{ distinct}\}}
=\sum_{\rho\le\ker\boldsymbol i}\mu_{\Pi}(\rho),
\]
and similarly for the column indices.  Hence, as a polynomial identity,
\begin{equation}\label{eq:ZE-partitions}
Z_E(z)
=\sum_{r\ge0}\frac{z^r}{r!}
  \sum_{\rho,\sigma\in\Pi_r}
  \mu_{\Pi}(\rho)\mu_{\Pi}(\sigma)E^\rho_\sigma,
\end{equation}
where coefficients with $r>n$ vanish.

For a nonempty finite label set $B$, let $\Pi(B)$ be its partition lattice and
define the connected weight
\begin{equation}\label{eq:bB-def}
b(B)=
\sum_{\substack{\rho,\sigma\in\Pi(B)\\
                  \rho\vee\sigma=\{B\}}}
 \mu_{\Pi}(\rho)\mu_{\Pi}(\sigma)E^\rho_\sigma.
\end{equation}
Here $E^\rho_\sigma$ is defined as in \eqref{eq:contraction}, with the edge
labels drawn from $B$.

Given arbitrary $\rho,\sigma\in\Pi_r$, put $\tau=\rho\vee\sigma$.  The blocks
of $\tau$ are exactly the connected components of the bipartite incidence
multigraph of $(\rho,\sigma)$.  Restriction to each $B\in\tau$ produces a
connected pair $(\rho|_B,\sigma|_B)$, and conversely a choice of one connected
pair on each block $B\in\tau$ assembles uniquely into $(\rho,\sigma)$.  The
M\"obius factors satisfy
\[
\mu_{\Pi}(\rho)\mu_{\Pi}(\sigma)
=\prod_{B\in\tau}\mu_{\Pi}(\rho|_B)\mu_{\Pi}(\sigma|_B).
\]
The contraction also factors:
\begin{equation}\label{eq:contraction-factorization}
E^\rho_\sigma
=\prod_{B\in\tau}E^{\rho|_B}_{\sigma|_B}.
\end{equation}
Indeed, the row- and column-label maps attached to different components have
disjoint domains, so their sums are independent.  Numerical labels assigned
to different components are allowed to coincide; after M\"obius inversion
there is no cross-component distinctness condition that could couple these
sums.

It follows that the coefficient in \eqref{eq:ZE-partitions} is
\begin{equation}\label{eq:moment-connected-factorization}
a([r])=\sum_{\tau\in\Pi_r}\prod_{B\in\tau}b(B),
\end{equation}
where $a([0])=1$.  The labelled moment--cumulant identity, equivalently the
exponential formula, now gives
\[
\sum_{r\ge0}a([r])\frac{z^r}{r!}
=
\exp\!\left(\sum_{r\ge1}b([r])\frac{z^r}{r!}\right).
\]
The left side is $Z_E(z)$ by \eqref{eq:ZE-partitions}; taking the formal
logarithm proves \eqref{eq:connected-partition} coefficient by coefficient.

If $\{\ell\}$ is a singleton block of $\rho$, then the row label associated
with that block occurs in exactly one matrix factor.  Summing that label gives
a column sum of $E$, which is zero.  The argument for a singleton block of
$\sigma$ is identical.
\end{proof}

\begin{lemma}[First cluster coefficients]\label{lem:first-cluster-coefficients}
Under the zero row- and column-sum assumptions,
\begin{align}
[z]\log Z_E(z)&=0,\label{eq:cluster-coeff-1}\\
[z^2]\log Z_E(z)&=\frac12\tr(E^{\mathsf T}E),\label{eq:cluster-coeff-2}\\
[z^3]\log Z_E(z)&=\frac23\sum_{i,j}E_{ij}^3.\label{eq:cluster-coeff-3}
\end{align}
\end{lemma}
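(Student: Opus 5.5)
The plan is to read all three coefficients directly off \cref{lem:connected-partition}. By \eqref{eq:connected-partition}, the coefficient of $z^r$ in $\log Z_E(z)$ is
\[
\frac1{r!}\sum_{\substack{\rho,\sigma\in\Pi_r\\ \rho\vee\sigma=\widehat1}}\mu_{\Pi}(\rho)\mu_{\Pi}(\sigma)E^\rho_\sigma .
\]
The final assertion of that lemma says that $E^\rho_\sigma=0$ whenever $\rho$ or $\sigma$ has a singleton block. So for $r\le 3$ I only need to list the partitions of $[r]$ with no singleton blocks, check the join condition, and evaluate the few surviving contractions. I do not expect any real obstacle. The only care needed is the sign and factorial in $\mu_\Pi$ and the $1/r!$ prefactor.

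For $r=1$, the only partition of $[1]$ is $\{\{1\}\}$, which is a singleton. Hence every term vanishes, and this gives \eqref{eq:cluster-coeff-1}. (It also follows at once from $\sum_{i,j}E_{ij}=0$.)

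For $r=2$, the only singleton-free partition of $[2]$ is $\widehat1=\{\{1,2\}\}$. So $\rho=\sigma=\widehat1$, and the join condition holds trivially. By \eqref{eq:mobius-weight}, $\mu_\Pi(\widehat1)=(-1)^{1}1!=-1$, so the product of the two M\"obius weights is $1$. In \eqref{eq:contraction}, both labels $\ell=1,2$ sit in the single block of $\rho$ and of $\sigma$. The contraction is therefore $\sum_{a,b}E_{ab}^2=\tr(E^{\mathsf T}E)$, and dividing by $2!$ gives \eqref{eq:cluster-coeff-2}.

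For $r=3$, any partition of $[3]$ with two or more blocks contains a singleton. Hence again only $\rho=\sigma=\widehat1$ survives. Here $\mu_\Pi(\widehat1)=(-1)^2 2!=2$, so the weight is $4$, and the contraction is $\sum_{a,b}E_{ab}^3$. Dividing by $3!$ gives $\tfrac46\sum E_{ij}^3=\tfrac23\sum E_{ij}^3$, which is \eqref{eq:cluster-coeff-3}.

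As a sanity check on the $r=2$ case, one can also expand $\log Z_E$ directly. The series $Z_E(z)=1+z\sum E_{ij}+z^2\sum_{\text{2-matchings}}E_eE_f+\cdots$ can be compared with $\tfrac12(\sum E)^2$ minus the row- and column-sharing terms. Those sharing terms vanish by the zero line sums except for the diagonal repeated-cell contribution, which again yields $\tfrac12\tr(E^{\mathsf T}E)$.
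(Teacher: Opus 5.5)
Your proposal is correct and is essentially the paper's own argument. You read each coefficient off \cref{lem:connected-partition}, discard every pair in which $\rho$ or $\sigma$ has a singleton block, and evaluate the single surviving pair $\rho=\sigma=\widehat1$ at degrees two and three, with M\"obius weights $-1$ and $2$ and prefactors $1/2!$ and $1/3!$, exactly as the paper does.
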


\begin{proof}
The degree-one term vanishes because its row and column partitions are both
singletons.  At degree two, the only surviving connected pair has both
partitions equal to the unique pair partition; its two M\"obius weights are
$-1$, and division by $2!$ gives
$\tfrac12\sum_{i,j}E_{ij}^2=\tfrac12\tr(E^{\mathsf T}E)$.
At degree three, absence of singleton blocks forces both partitions to be the
one-block partition.  Each M\"obius weight is $2$, the contraction is
$\sum_{i,j}E_{ij}^3$, and the factor $4/3!$ is $2/3$.
\end{proof}

The incidence multigraph of a surviving pair $(\rho,\sigma)$ is connected and has minimum degree at least two.  Write
\[
r=\text{number of edges},
\qquad
v=\abs\rho+\abs\sigma,
\qquad
g=r-v.
\]
We call $g$ the \emph{incidence excess}.  With this convention a connected cycle has incidence excess zero; its cyclomatic number is $g+1$.  In particular, $g\ge0$.

\subsection{The zero-excess cycle sector}

\begin{lemma}[Cycle resummation]\label{lem:cycle-resummation}
The generating series of the $g=0$ contributions in \eqref{eq:connected-partition} is
\begin{equation}\label{eq:cycle-resummation}
\sum_{\ell\ge1}\frac{z^{2\ell}}{2\ell}
\tr\bigl((E^{\mathsf T}E)^\ell\bigr)
=-\frac12\log\det(I-z^2E^{\mathsf T}E),
\end{equation}
whenever $\abs z\norm{E}_{\op}<1$.
\end{lemma}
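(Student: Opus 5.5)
We classify the excess-zero connected pairs $(\rho,\sigma)\in\Pi_r^2$ with no singleton blocks, count them with their M\"obius weights, and evaluate their contractions as traces. With $g=0$ the incidence multigraph is connected, has $r$ edges and $r$ vertices, and has minimum degree at least two. Hence every vertex has degree exactly two and the graph is a single cycle. Since the graph is bipartite (row blocks versus column blocks), the cycle alternates between $\rho$-blocks and $\sigma$-blocks. Therefore $r=2\ell$ is even, and $\rho$ and $\sigma$ are each perfect pair partitions, with $\ell$ blocks each. Each M\"obius weight equals $(-1)^\ell$, so $\mu_\Pi(\rho)\mu_\Pi(\sigma)=1$. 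The degenerate case $\ell=1$ (a $2$-cycle with a doubled edge) fits the same description. Odd $r$ and excess-zero configurations with a singleton block contribute nothing, by \cref{lem:connected-partition}.

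Next we evaluate the contraction. Walk around the cycle: row block $\alpha_1$, edge, column block $\beta_1$, edge, row block $\alpha_2$, and so on. Each edge $\ell$ contributes $E_{\alpha(\cdot),\beta(\cdot)}$. Going around, the product is $E_{i_1j_1}E_{i_2j_1}E_{i_2j_2}\cdots E_{i_1j_\ell}$. Summing over all labels gives $\tr\bigl((EE^{\mathsf T})^\ell\bigr)=\tr\bigl((E^{\mathsf T}E)^\ell\bigr)$, independently of the particular cycle.

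Then we count the pairs. We need the number of ordered pairs of perfect matchings $(\rho,\sigma)$ of $[2\ell]$ whose union is a single $2\ell$-cycle. Fix $\rho$; there are $(2\ell-1)!!$ choices. Contracting each $\rho$-pair to a point, $\sigma$ must induce a single cycle through the $\ell$ points, with an orientation choice at each pair. The number of such $\sigma$ is $2^{\ell-1}(\ell-1)!$. We check this formula at $\ell=1$ (count $1$) and $\ell=2$ (count $2$: of the three matchings, all but $\rho$ itself). The total count is therefore $(2\ell-1)!!\,2^{\ell-1}(\ell-1)!=(2\ell)!/(2\ell)$. Dividing by $r!=(2\ell)!$ gives the coefficient $1/(2\ell)$, so the $g=0$ generating series is $\sum_{\ell}\frac{z^{2\ell}}{2\ell}\tr\bigl((E^{\mathsf T}E)^\ell\bigr)$. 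As a consistency check, this agrees with \eqref{eq:cluster-coeff-2} at $\ell=1$.

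Finally, $E^{\mathsf T}E$ is symmetric positive semidefinite with eigenvalues $\lambda_k\le\norm{E}_{\op}^2$. When $\abs z\norm{E}_{\op}<1$ the series equals $\sum_k\sum_\ell (z^2\lambda_k)^\ell/(2\ell)=-\frac12\sum_k\log(1-z^2\lambda_k)=-\frac12\log\det(I-z^2E^{\mathsf T}E)$, and it converges absolutely. The main obstacle is the counting step. The cleanest route is the bijection between ordered pairs forming a Hamiltonian alternating cycle and cyclic sequences of labels with a marked starting edge. I would verify the resulting formula $(2\ell)!/(2\ell)$ directly: arrange $[2\ell]$ around a cycle and divide by the $2\ell$ rotations. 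Reflections need no separate quotient, because the assignment of $\rho$ versus $\sigma$ is fixed once a starting edge and direction are chosen, which accounts for the factor $2\ell$ exactly.
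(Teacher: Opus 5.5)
Your proposal is correct and follows the paper's proof: degree counting forces an alternating cycle of two pair partitions with M\"obius product one, there are $(2\ell)!/(2\ell)$ such ordered pairs, each contraction equals $\tr\bigl((E^{\mathsf T}E)^\ell\bigr)$, and the eigenvalue expansion resums the series. The one difference is in the count: your primary argument fixes $\rho$ and counts the $2^{\ell-1}(\ell-1)!$ admissible $\sigma$, whereas the paper uses your fallback route, mapping linear orderings of the labels onto ordered pairs with fibres of size $2\ell$ indexed by the starting label. In that route the fibre is not the set of $2\ell$ rotations, since an odd rotation swaps $\rho$ and $\sigma$; it consists of the $\ell$ even rotations together with $\ell$ reflections, and the count $2\ell$ is unaffected.
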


\begin{proof}
If $g=0$, the connected incidence multigraph has as many edges as vertices and minimum degree at least two.  Every vertex therefore has degree two, so the graph is an alternating cycle.  Thus $r=2\ell$, and both $\rho$ and $\sigma$ are pair partitions.  Here
$\ell=1$ is the two-vertex doubled-edge cycle in the incidence multigraph;
it supplies the leading term $\tfrac12\tr(E^{\mathsf T}E)$.  The M\"obius
weights of the two pair partitions multiply to one.

The number of ordered pairs of pair partitions of $[2\ell]$ whose union is one alternating cycle is $(2\ell)!/(2\ell)$.  To see this without an orientation convention, start from a linear ordering
$(e_1,\dots,e_{2\ell})$ of the edge labels and set
\[
\rho=\bigl\{\{e_1,e_2\},\{e_3,e_4\},\dots,\{e_{2\ell-1},e_{2\ell}\}\bigr\},
\]
\[
\sigma=\bigl\{\{e_2,e_3\},\{e_4,e_5\},\dots,
                 \{e_{2\ell},e_1\}\bigr\}.
\]
This produces an ordered pair whose incidence graph is one cycle.
Conversely, from an ordered pair $(\rho,\sigma)$ and a choice of the first
edge $e_1$, the sequence is forced by alternately taking the $\rho$-partner
and the $\sigma$-partner.  Thus every ordered pair has exactly $2\ell$
preimages among the $(2\ell)!$ linear orderings.  All such pairs are
isomorphic by a relabelling of the edge labels, and their contractions are
therefore equal.  For the canonical pair the contraction is
\[
\sum_{j_1,\dots,j_\ell}
\sum_{i_1,\dots,i_\ell}
\prod_{q=1}^\ell E_{i_qj_q}E_{i_qj_{q+1}},
\qquad j_{\ell+1}=j_1,
\]
which is $\tr((E^{\mathsf T}E)^\ell)$.  Division by $(2\ell)!$ gives the coefficient $1/(2\ell)$.  Summing the matrix logarithm series proves \eqref{eq:cycle-resummation}.
\end{proof}

\subsection{Positive-excess terms}

We first bound a fixed contraction.

\begin{lemma}[Spanning-tree contraction bound]\label{lem:contraction-bound}
Let $(\rho,\sigma)$ have connected incidence multigraph, with $r$ edges, $v$ vertices, and excess $g=r-v$.  Then
\begin{equation}\label{eq:contraction-bound}
\abs{E^\rho_\sigma}
\le n\,s^{v-1}\Lambda_0^{g+1}.
\end{equation}
\end{lemma}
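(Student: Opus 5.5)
We bound the contraction by taking absolute values inside the sum and then summing the vertex labels one at a time along a spanning tree of the incidence multigraph. Write $\Gamma$ for the incidence multigraph of $(\rho,\sigma)$. Its vertices are the blocks of $\rho$ (row vertices) and of $\sigma$ (column vertices), and each label $\ell\in[r]$ is an edge joining $B_\rho(\ell)$ to $B_\sigma(\ell)$. Then
\[
\abs{E^\rho_\sigma}\le\sum_{\alpha,\beta}\prod_{\ell=1}^r\abs{E_{\alpha(B_\rho(\ell)),\beta(B_\sigma(\ell))}}.
\]

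First, since $\Gamma$ is connected with $v$ vertices, it has a spanning tree $\tau$ with $v-1$ edges. This leaves $r-(v-1)=g+1$ non-tree edges. For each non-tree edge we bound the factor $\abs{E_{ij}}\le C_0\le\Lambda_0$ uniformly in the labels. This removes those factors and contributes $\Lambda_0^{g+1}$. What remains is
\[
\Lambda_0^{g+1}\sum_{\alpha,\beta}\prod_{\ell\in\tau}\abs{E_{\alpha(B_\rho(\ell)),\beta(B_\sigma(\ell))}}.
\]

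Next, we root $\tau$ at an arbitrary vertex and sum the labels of the vertices from the leaves inward. Take a leaf $x$ of the current tree, other than the root, whose unique tree edge goes to its parent $y$. The label of $x$ appears only in that one factor, because non-tree edges have already been bounded away. If $x$ is a row vertex, summing its label over $[n]$ with the parent's column label fixed gives a column $\ell_1$-sum, which is at most $s$. If $x$ is a column vertex, the sum is a row $\ell_1$-sum, also at most $s$. We delete $x$ and repeat. After $v-1$ steps only the root label remains, and its free sum over $[n]$ gives the factor $n$. The result is $n\,s^{v-1}\Lambda_0^{g+1}$, which is \eqref{eq:contraction-bound}.

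The only point needing care is the ordering. Each summed label must appear in exactly one remaining factor when it is summed, and the peeling order guarantees this. The bound $\Lambda_0\ge1$ is not actually needed, since $C_0\le\Lambda_0$ already suffices. Multiple edges between the same two blocks are also harmless: at most one of them lies in $\tau$, and the others are among the non-tree edges bounded by $\Lambda_0$. We expect no step to be a real obstacle. The key observation is simply that the excess $g$ is exactly one less than the number of non-tree edges.
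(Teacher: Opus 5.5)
Your proof is correct and follows the paper's argument essentially verbatim: you choose a spanning tree, bound the $g+1$ non-tree factors by $\Lambda_0$, and peel leaves toward a root, so that each of the $v-1$ nonroot label sums is a row or column $\ell_1$-norm bounded by $s$ and the root contributes $n$. Your remarks on parallel edges and on $\abs{E_{ij}}\le C_0\le\Lambda_0$ are accurate and only make explicit what the paper leaves implicit.
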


\begin{proof}
Choose a spanning tree of the incidence multigraph.  Bound the $r-(v-1)=g+1$ non-tree matrix factors by $\Lambda_0$.  For the remaining tree product, choose the root label in $n$ ways and eliminate leaves successively.  Each leaf-label sum is bounded by a row or column $\ell_1$-norm of $E$, hence by $s$.  There are $v-1$ nonroot vertices.
\end{proof}

We also need a bound for the total M\"obius weight at fixed excess.

\begin{lemma}[Partition-weight bound]\label{lem:partition-weight}
For $r\ge1$ and $g\ge0$,
\begin{equation}\label{eq:partition-weight}
\frac1{r!}
\sum_{\substack{\rho,\sigma\in\Pi_r\\
                 \rho,\sigma\text{ have no singleton blocks}\\
                 r-\abs\rho-\abs\sigma=g}}
\abs{\mu_{\Pi}(\rho)\mu_{\Pi}(\sigma)}
\le \frac{(4r^3)^g}{(2g)!}.
\end{equation}
\end{lemma}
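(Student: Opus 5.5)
The plan is to convert the Möbius weights into permutation counts, so that the left side of \eqref{eq:partition-weight} becomes a ratio of counts of fixed-point-free permutations. For a partition $\rho\in\Pi_r$ without singleton blocks, set its \emph{defect} to be $d(\rho)=\sum_{B\in\rho}(\abs B-2)=r-2\abs\rho\ge0$. The constraint $r-\abs\rho-\abs\sigma=g$ then reads $d(\rho)+d(\sigma)=2g$.

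Next observe that $\abs{\mu_{\Pi}(\rho)}=\prod_{B}(\abs B-1)!$ is the number of cyclic orderings of the blocks of $\rho$. Hence
\[
N_r(d)=\sum_{\rho:\ \text{no singletons},\ d(\rho)=d}\abs{\mu_\Pi(\rho)}
\]
is exactly the number of fixed-point-free permutations of $[r]$ with $(r-d)/2$ cycles. This is zero unless $r\equiv d\pmod 2$. The left side of \eqref{eq:partition-weight} then equals
\[
\frac1{r!}\sum_{d_1+d_2=2g}N_r(d_1)N_r(d_2).
\]

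The key step is a bound of the form
\[
N_r(d)\le \frac{r^{2d}}{d!}\,(r-d-1)!!,
\]
where $(r-d-1)!!$ counts fixed-point-free involutions of an $(r-d)$-set. To get it, I would reduce a permutation of defect $d$ to an involution by deleting $\abs C-2$ elements from each cycle $C$ of length $\abs C\ge3$, leaving a $2$-cycle. The permutation is recovered from three pieces of data: the deleted set, the involution on the remaining $r-d$ elements, and, for each deleted element, its predecessor in the cycle.

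The crude count of this reconstruction data is $\binom rd$ choices of the deleted set, times $(r-d-1)!!$ involutions, times at most $r^d$ predecessor choices. Since $\binom rd\le r^d/d!$, this gives the bound above. I expect the main obstacle to be the bookkeeping here. The deletion must be made canonical (for instance, keep the two smallest elements of each cycle) so that reconstruction is injective. One must also check that the order of reinsertion does not cost an extra factor, for example by reinserting deleted elements in increasing order with each predecessor taken from the already-built cycle.

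The final step combines the pieces. Using $((m-1)!!)^2\le m!$ for even $m$, one gets
\[
\frac{(r-d_1-1)!!\,(r-d_2-1)!!}{r!}\le\sqrt{\frac{(r-d_1)!\,(r-d_2)!}{r!^2}}.
\]
One should bound this by roughly $r^{-(d_1+d_2)/2}=r^{-g}$; near $d\approx r$ the ratio $(r-d)!/r!$ must be handled with a little slack, absorbed into the constant $4$. That yields a term bounded by $r^{4g}\,r^{-g}/(d_1!\,d_2!)$. Finally,
\[
\sum_{d_1+d_2=2g}\frac1{d_1!\,d_2!}=\frac{2^{2g}}{(2g)!},
\]
which gives $(4r^3)^g/(2g)!$. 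If the factor $r^{-g}$ proves too lossy in the edge regime, I would instead keep one of the two factors exact (for example $N_r(d_2)\le r!$ when $d_2$ is large), since in that regime $(2g)!$ is already comparable to a power of $r$.
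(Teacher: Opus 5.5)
\textbf{There is a genuine gap in your final step.}

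The earlier parts are sound. $\abs{\mu_{\Pi}(\rho)}$ counts cyclic orderings of the blocks, so each single-partition sum is the number $N_r(d)$ of fixed-point-free permutations with $(r-d)/2$ cycles. Your encoding then gives
\[
N_r(d)\le\binom{r}{d}\,(r-d-1)!!\,r^d=\frac{r!}{2^pp!}\cdot\frac{r^d}{d!},\qquad p=\frac{r-d}{2}.
\]
This is exactly the paper's bound \eqref{eq:one-partition-bound}, which the paper obtains from the generating function $(\sum_t u^t/(t+2))^p$ instead.

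The trouble starts when you replace $\binom{r}{d}=(r)_d/d!$ by $r^d/d!$. This discards the factor $(r)_d/r^d$, and that is precisely the factor needed to cancel the $r!$ in the denominator. After this step the estimate you need is false, not merely lossy:
\begin{itemize}
\item $\sqrt{(r-d_1)!\,(r-d_2)!}/r!=((r)_{d_1}(r)_{d_2})^{-1/2}$ is never below $r^{-g}$.
\item It exceeds $r^{-g}$ by a factor $e^{\Theta(r)}$ once $d_1,d_2$ are proportional to $r$, not only near $d\approx r$.
\item There is no room in the constant $4$, because $\sum_{d_1+d_2=2g}1/(d_1!\,d_2!)=4^g/(2g)!$ uses it up exactly.
\end{itemize}
Concretely, for the single term $d_1=d_2=g$, your bound divided by the right side of \eqref{eq:partition-weight} equals
\[
\frac{r^g}{(r)_g}\cdot\frac{\binom{r-g}{(r-g)/2}}{2^{r-g}}\cdot\frac{\binom{2g}{g}}{4^g}.
\]
At $r=20$, $g=18$ this is about $1.4\times10^4$, and there it is the only admissible term. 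At $g=r/2$ it grows like $e^{(1-\log 2)r/2}$, up to polynomial factors.

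Your fallback $N_r(d_2)\le r!$ does not cover this middle regime either. At $d_1=d_2=g=r/2$ (with $4\mid r$) it leaves $r^r(r/2-1)!!/(r/2)!=\exp(\tfrac34 r\log r+O(r))$, whereas the target is $\exp(\tfrac12 r\log r+O(r))$.

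\textbf{The repair is to keep $\binom{r}{d}$.} Put $q=(r-d_2)/2$, so that $p+q=r-g$. Your own count then gives
\[
\frac1{r!}N_r(d_1)N_r(d_2)\le\frac{r!}{2^{p+q}p!\,q!}\cdot\frac{r^{2g}}{d_1!\,d_2!},
\qquad
\frac{r!}{2^{p+q}p!\,q!}=\frac{(r)_g}{2^{r-g}}\binom{r-g}{p}\le r^g.
\]
Summing over $d_1+d_2=2g$ gives $(4r^3)^g/(2g)!$ directly, with no square-root step and no edge case. This is the paper's argument.

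Alternatively, your own remark about reinserting the deleted elements in a fixed order, each after an already placed element, also fixes it. Counting that way gives $(r-1)_d$ predecessor choices instead of $r^d$. Since $(r-1)_d\,(r-d-1)!!\le(r)_d\,(r-d-1)!!=r!/(2^pp!)$, you can then afford the bound $\binom{r}{d}\le r^d/d!$.
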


\begin{proof}
For a partition $\rho$ with $p$ blocks and no singleton blocks, put $a=r-2p$.  Thus $a\ge0$ and has the same parity as $r$.  The exponential formula for a set of $p$ unordered blocks gives
\begin{align}
&\sum_{\substack{\rho\in\Pi_r\\r-2\abs\rho=a\\
                 \rho\text{ has no singleton blocks}}}
\abs{\mu_{\Pi}(\rho)} \notag\\
&\qquad=\frac{r!}{p!}[u^a]
 \left(\sum_{t\ge0}\frac{u^t}{t+2}\right)^p \notag\\
&\qquad=\frac{r!}{p!}
 \sum_{\substack{t_1+\cdots+t_p=a\\t_i\ge0}}
 \prod_{i=1}^p\frac1{t_i+2}.
\label{eq:one-partition-weight}
\end{align}
Indeed, a block of size $2+t$ contributes the M\"obius magnitude
$(1+t)!$, while its labelled-set exponential generating-function factor
is $u^t/(2+t)!$; their product is $u^t/(2+t)$.  Therefore
\begin{equation}\label{eq:one-partition-bound}
\sum_{\substack{\rho\in\Pi_r\\r-2\abs\rho=a\\
                 \rho\text{ has no singleton blocks}}}
\abs{\mu_{\Pi}(\rho)}
\le
\frac{r!}{2^pp!}\binom{a+p-1}{a}
\le
\frac{r!}{2^pp!}\frac{r^a}{a!}.
\end{equation}

For a pair of defects $a,b$, the excess condition is $a+b=2g$, and if $q=(r-b)/2$, then $p+q=r-g$.  Moreover,
\[
\frac{r!}{2^{p+q}p!q!}
=\frac{(r)_g}{2^{r-g}}\binom{r-g}{p}
\le r^g.
\]
Using \eqref{eq:one-partition-bound} twice and summing over $a+b=2g$ gives
\[
\frac1{r!}\sum\abs{\mu_{\Pi}(\rho)\mu_{\Pi}(\sigma)}
\le r^{3g}\sum_{a+b=2g}\frac1{a!b!}
=\frac{(4r^3)^g}{(2g)!}.
\]
\end{proof}

\paragraph{Truncation notation.}
For $t\ge n/2$, let $\mathcal E_{r,g}(t)$ denote the total absolute value,
after multiplication by $t^{-r}$, of all degree-$r$ connected-partition
terms having incidence excess $g$.  By
\cref{lem:contraction-bound,lem:partition-weight}, if the incidence graph is
connected and has no degree-one vertex, then
\begin{equation}\label{eq:fixed-rg-bound}
\mathcal E_{r,g}(t)
\le
2\Lambda_0\,
\theta^{\,r-g-1}
\frac{(8\Lambda_0 r^3/n)^g}{(2g)!},
\qquad
\theta=\frac{s}{t}.
\end{equation}
Indeed, the incidence graph has $v=r-g$ vertices and
\[
t^{-r}ns^{v-1}\Lambda_0^{g+1}
=
\theta^{v-1}\Lambda_0^{g+1}\frac{n}{t^{g+1}},
\]
while $n/t^{g+1}\le 2^{g+1}n^{-g}$.

\begin{lemma}[Uniform truncation parameters]\label{lem:truncation-parameters}
Fix a sequence $\eta_n\ge0$ with $\eta_n\to0$.  For every sufficiently large $n$, uniformly
over all $E$ with $0<s(E)\le\eta_n n$, put
\begin{equation}\label{eq:Rtr-def}
q=\frac{4es}{n},
\qquad
R_{\mathrm{tr}}
=
\max\left\{
3,\left\lceil\frac{4\log n}{\log(1/q)}\right\rceil
\right\}.
\end{equation}
Then
\begin{equation}\label{eq:Rtr-properties}
q\le\frac14,
\qquad
R_{\mathrm{tr}}\le 3+\frac{4\log n}{\log4},
\qquad
q^{R_{\mathrm{tr}}}\le n^{-4}.
\end{equation}
In particular, uniformly over the class,
\begin{equation}\label{eq:Rtr-growth}
R_{\mathrm{tr}}=O(\log n),
\qquad
\frac{R_{\mathrm{tr}}^7}{n}\longrightarrow0.
\end{equation}
\end{lemma}

\begin{proof}
Since $s/n\le\eta_n$, one has $q\le4e\eta_n\to0$, and hence $q\le1/4$
for all sufficiently large $n$.  This gives the upper bound for
$R_{\mathrm{tr}}$ in \eqref{eq:Rtr-properties}.

It remains to verify the power bound in the two cases in
\eqref{eq:Rtr-def}.  If the maximum selects the ceiling term, then
$R_{\mathrm{tr}}\ge 4\log n/\log(1/q)$, so
$q^{R_{\mathrm{tr}}}\le n^{-4}$.  If the maximum selects $3$, then
$4\log n/\log(1/q)\le3$, which is equivalent to
$q^3\le n^{-4}$.  This proves \eqref{eq:Rtr-properties}.
The growth assertions follow immediately.
\end{proof}

\begin{lemma}[Excess-one bound]\label{lem:excess-one-bound}
For every $0\le C_0<\infty$ there is a constant
$C_{\mathrm{one}}=C_{\mathrm{one}}(C_0)$ such that, for all sufficiently
large $n$ in the setting of \cref{lem:truncation-parameters} and every
$t\ge n/2$,
\begin{equation}\label{eq:excess-one-bound}
\sum_{r\le R_{\mathrm{tr}}}\mathcal E_{r,1}(t)
\le C_{\mathrm{one}}\frac{\theta}{n}.
\end{equation}
\end{lemma}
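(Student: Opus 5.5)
The bound should follow directly from \eqref{eq:fixed-rg-bound} with $g=1$, once we observe that excess-one terms only start at degree $r=3$. The truncation $r\le R_{\mathrm{tr}}$ plays no essential role: I expect the full series over all $r$ to obey the same bound.

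\emph{Step 1: the smallest degree.} Terms in which $\rho$ or $\sigma$ has a singleton block vanish by \cref{lem:connected-partition}, so only pairs without singleton blocks contribute. Their incidence multigraph is connected, has at least one row vertex and one column vertex (so $v\ge2$), and has minimum degree at least two. With $g=1$ we have $v=r-1$, so $r\ge3$. The case $r=3$, $v=2$ is realized, namely by both partitions equal to the one-block partition, as in \eqref{eq:cluster-coeff-3}. Hence $\mathcal E_{1,1}(t)=\mathcal E_{2,1}(t)=0$ and the sum starts at $r=3$.

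\emph{Step 2: insert the fixed-$(r,g)$ bound.} For $r\ge3$ and $g=1$, \eqref{eq:fixed-rg-bound} gives
\[
\mathcal E_{r,1}(t)\le 2\Lambda_0\,\theta^{r-2}\frac{8\Lambda_0 r^3/n}{2!}
=\frac{8\Lambda_0^2}{n}\,r^3\theta^{r-2}.
\]
Factoring out one power of $\theta$ yields
\[
\sum_{3\le r\le R_{\mathrm{tr}}}\mathcal E_{r,1}(t)\le\frac{8\Lambda_0^2\,\theta}{n}\sum_{r\ge3}r^3\theta^{r-3}.
\]

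\emph{Step 3: bound the remaining series absolutely.} Since $t\ge n/2$, we have $\theta=s/t\le 2s/n\le q/(2e)\le 1/8$ for large $n$, by \cref{lem:truncation-parameters}. Therefore
\[
\sum_{r\ge3}r^3\theta^{r-3}\le\sum_{r\ge3}r^3 8^{-(r-3)}=:K_0,
\]
an absolute finite constant. Taking $C_{\mathrm{one}}=8\Lambda_0^2K_0$, which depends only on $C_0$ through $\Lambda_0=\max\{1,C_0\}$, proves \eqref{eq:excess-one-bound}.

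The only point needing care is Step 1. It is the vanishing of the $r\le2$ excess-one contributions that leaves a spare factor of $\theta$. Without it we would only get the weaker bound $O(1/n)$ instead of $O(\theta/n)=O(s/n^2)$, which is exactly the scale required in \cref{thm:small-line-permanent}.
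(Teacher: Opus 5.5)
Your proposal is correct and follows the paper's proof essentially step for step: you restrict to $r\ge3$, specialize \eqref{eq:fixed-rg-bound} to $g=1$ to get $8\Lambda_0^2 r^3\theta^{r-2}/n$, and sum the resulting series using $\theta\le q/(2e)$. Your justification of $r\ge3$ via $v=r-1\ge2$ is the precise reason, and using the cruder bound $\theta\le 1/8$ in place of the paper's $\theta\le 1/(8e)$ is harmless.
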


\begin{proof}
For $g=1$, connectedness and minimum degree two imply $r\ge3$.  From
\eqref{eq:fixed-rg-bound},
\[
\mathcal E_{r,1}(t)
\le \frac{8\Lambda_0^2}{n}r^3\theta^{r-2}.
\]
Moreover,
\[
\theta=\frac{s}{t}\le\frac{2s}{n}=\frac{q}{2e}\le\frac1{8e}.
\]
Therefore
\[
\sum_{r\le R_{\mathrm{tr}}}\mathcal E_{r,1}(t)
\le
\frac{8\Lambda_0^2\theta}{n}
\sum_{u\ge0}(u+3)^3(8e)^{-u}.
\]
The final series is finite and depends on no parameter, proving the claim.
\end{proof}

\begin{lemma}[Higher-excess bound]\label{lem:higher-excess-bound}
For every $0\le C_0<\infty$ there is a constant
$C_{\mathrm{high}}=C_{\mathrm{high}}(C_0)$ such that, for all sufficiently
large $n$ in the setting of \cref{lem:truncation-parameters} and every
$t\ge n/2$,
\begin{equation}\label{eq:higher-excess-bound}
\sum_{\substack{r\le R_{\mathrm{tr}}\\g\ge2}}
\mathcal E_{r,g}(t)
\le
C_{\mathrm{high}}
\frac{\theta R_{\mathrm{tr}}^7}{n^2}
=
o\!\left(\frac{\theta}{n}\right).
\end{equation}
The little-$o$ term is uniform over the matrix class and over
$t\ge n/2$.
\end{lemma}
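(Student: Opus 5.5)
Starting from the per-term estimate \eqref{eq:fixed-rg-bound}, I would sum it over the finite range $r\le R_{\mathrm{tr}}$, $g\ge2$. That estimate gives
\[
\mathcal E_{r,g}(t)\le 2\Lambda_0\,\theta^{\,r-g-1}\frac{(8\Lambda_0r^3/n)^g}{(2g)!}.
\]
The useful constraints come from the structure of the incidence graph. It has $v=r-g$ vertices, and each vertex has degree at least two, so $2r\ge 2v$. More usefully, every block has size at least two, so $|\rho|,|\sigma|\le r/2$. We also need $v\ge2$ because both partitions are nonempty. From these, $r-g-1=v-1\ge1$, which lets me extract one factor of $\theta$ from every term: $\theta^{r-g-1}\le\theta$, since $\theta\le 1/(8e)<1$ by the computation in \cref{lem:excess-one-bound}. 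The point of the argument is to keep exactly one factor of $\theta$ and to collect enough powers of $1/n$ from the excess factor.

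Next, for $g\ge2$ and $r\le R_{\mathrm{tr}}$, I would write
\[
\left(\frac{8\Lambda_0r^3}{n}\right)^g=\left(\frac{8\Lambda_0r^3}{n}\right)^2\left(\frac{8\Lambda_0r^3}{n}\right)^{g-2}.
\]
Since $r^3\le R_{\mathrm{tr}}^3$ and $R_{\mathrm{tr}}^3/n\to0$ uniformly by \eqref{eq:Rtr-growth}, the second factor is at most $1$ for large $n$, as is $1/(2g)!$ up to a constant. Hence each term is at most $2\Lambda_0\theta\cdot 64\Lambda_0^2R_{\mathrm{tr}}^6/n^2\cdot x^{g-2}/(2g)!$ with $x\le1$. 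Summing over $g\ge2$ gives a bounded constant, since $\sum_g 1/(2g)!<\infty$. It remains to sum over $r\le R_{\mathrm{tr}}$, which contributes one more factor $R_{\mathrm{tr}}$ and yields the bound $C\,\Lambda_0^3\theta R_{\mathrm{tr}}^7/n^2$.

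One might worry that $g$ is unbounded for fixed $r$. It is not: $g=r-v\le r-2$, so the range is finite, and in any case the factorial sum controls it. Nor is the combinatorial count an issue, since \cref{lem:partition-weight} already absorbs it.

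The final claim, that this is $o(\theta/n)$ uniformly, is immediate from $R_{\mathrm{tr}}^7/n\to0$ in \eqref{eq:Rtr-growth}, which holds uniformly over the class and is independent of $t$. The main obstacle is bookkeeping rather than substance. It is checking that $v-1\ge1$ really holds, so that a full factor of $\theta$ survives and is not lost to the worst case $r=g+1$. Minimum degree two and connectedness force $v\ge2$, and that suffices. I would then fix $C_{\mathrm{high}}=128\Lambda_0^3\sum_{g\ge2}1/(2g)!$ or similar, which depends only on $C_0$.
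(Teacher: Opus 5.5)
Your proposal is correct and follows the paper's proof essentially step for step. You extract one factor of $\theta$ from $v\ge2$, bound $r^3\le R_{\mathrm{tr}}^3$ to pull out $x^2$ with $x=8\Lambda_0R_{\mathrm{tr}}^3/n$, sum over $g\ge2$, and then sum over the at most $R_{\mathrm{tr}}$ values of $r$. The only difference is cosmetic: the paper takes $x\le 1/2$, drops $1/(2g)!\le1$ and uses $\sum_{g\ge2}x^g\le 2x^2$, whereas you use $x\le 1$ and the convergence of $\sum_{g\ge2}1/(2g)!$, which gives a slightly smaller constant.
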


\begin{proof}
A connected bipartite incidence graph has at least one vertex in each
bipartition class.  Hence $v\ge2$, and therefore
$r-g-1=v-1\ge1$.  Since $\theta\le1$, the factor
$\theta^{r-g-1}$ in \eqref{eq:fixed-rg-bound} is at most $\theta$.

Put
\[
x=\frac{8\Lambda_0R_{\mathrm{tr}}^3}{n}.
\]
By \eqref{eq:Rtr-growth}, $x\le1/2$ for all sufficiently large $n$.
There are at most $R_{\mathrm{tr}}$ possible values of $r$, and extending
the excess sum only enlarges the bound.  Thus
\begin{align*}
\sum_{\substack{r\le R_{\mathrm{tr}}\\g\ge2}}
\mathcal E_{r,g}(t)
&\le
2\Lambda_0\theta R_{\mathrm{tr}}
\sum_{g\ge2}\frac{x^g}{(2g)!}\\
&\le
2\Lambda_0\theta R_{\mathrm{tr}}
\sum_{g\ge2}x^g\\
&\le
4\Lambda_0\theta R_{\mathrm{tr}}x^2\\
&=
256\Lambda_0^3
\frac{\theta R_{\mathrm{tr}}^7}{n^2}.
\end{align*}
The final equality in \eqref{eq:higher-excess-bound} follows from
$R_{\mathrm{tr}}^7/n\to0$.
\end{proof}

\begin{lemma}[Cluster truncation]\label{lem:cluster-truncation}
In the setting of \cref{lem:truncation-parameters}, for every $t\ge n/2$,
\begin{equation}\label{eq:cluster-truncation}
\sum_{r>R_{\mathrm{tr}}}\frac{t^{-r}}{r!}
 \sum_{e_1,\dots,e_r}\abs{\phi^T(e_1,\dots,e_r)}
 \prod_{a=1}^r\abs{E_{e_a}}
\le 8s\,n^{-4}.
\end{equation}
\end{lemma}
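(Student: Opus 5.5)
This is a direct application of the tail estimate \eqref{eq:cluster-tail} in \cref{lem:cluster-bound}. We apply it with activities $w_{ij}=E_{ij}/t$ and truncation level $R=R_{\mathrm{tr}}$. The left side of \eqref{eq:cluster-truncation} is exactly the left side of \eqref{eq:cluster-tail} for these activities, because $\prod_a\abs{w_{e_a}}=t^{-r}\prod_a\abs{E_{e_a}}$. What remains is to check the hypotheses and to bound $W$ and $e\Delta$.

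By \eqref{eq:WDelta}, for $t\ge n/2$ we have $W\le ns/t\le 2s$ and $\Delta\le 2s/t\le 4s/n$. Hence
\[
e\Delta\le \frac{4es}{n}=q.
\]
By \eqref{eq:Rtr-properties} in \cref{lem:truncation-parameters}, $q\le1/4$ for all sufficiently large $n$, uniformly over the class. Therefore $e\Delta\le1/2$, and \cref{lem:cluster-bound} applies with $R=R_{\mathrm{tr}}\ge3\ge1$.

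The tail bound then gives at most
\[
4W(e\Delta)^{R_{\mathrm{tr}}}\le 4\cdot 2s\cdot q^{R_{\mathrm{tr}}}.
\]
The monotonicity step $(e\Delta)^{R}\le q^{R}$ uses $0\le e\Delta\le q$. Finally, \eqref{eq:Rtr-properties} gives $q^{R_{\mathrm{tr}}}\le n^{-4}$, and so the sum is at most $8sn^{-4}$.

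There is no real obstacle. The only point needing care is the bound on $\Delta$: a cell $e=(i,j)$ is incompatible with the cells in row $i$ and in column $j$ (itself included). The sum of $\abs{E_f}$ over these cells is at most $2s$, and dividing by $t\ge n/2$ gives $\Delta\le 4s/n$, which is exactly \eqref{eq:WDelta}.
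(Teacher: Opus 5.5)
Your proposal is correct and is essentially the paper's own proof. You apply the tail bound \eqref{eq:cluster-tail} of \cref{lem:cluster-bound} to the activities $E_{ij}/t$, use $W\le 2s$ and $e\Delta\le q\le 1/4$ from \eqref{eq:WDelta} and \cref{lem:truncation-parameters}, and conclude via $q^{R_{\mathrm{tr}}}\le n^{-4}$.
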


\begin{proof}
For the activities $w_{ij}=E_{ij}/t$, \eqref{eq:WDelta} gives
$W\le2s$ and $e\Delta\le q$.  Hence
\cref{lem:cluster-bound,lem:truncation-parameters} gives
\[
\text{left side of \eqref{eq:cluster-truncation}}
\le4W(e\Delta)^{R_{\mathrm{tr}}}
\le8s q^{R_{\mathrm{tr}}}
\le8s n^{-4}.
\]
\end{proof}

\begin{lemma}[Cycle-series truncation]\label{lem:cycle-truncation}
For every $0\le C_0<\infty$ there is a constant
$C_{\mathrm{cyc}}=C_{\mathrm{cyc}}(C_0)$ such that, in the setting of
\cref{lem:truncation-parameters} and for every $t\ge n/2$,
\begin{equation}\label{eq:cycle-truncation}
\sum_{2\ell>R_{\mathrm{tr}}}
\frac1{2\ell t^{2\ell}}
\tr\bigl((E^{\mathsf T}E)^\ell\bigr)
\le C_{\mathrm{cyc}}\frac{s}{n^2}.
\end{equation}
\end{lemma}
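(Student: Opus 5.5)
We bound each trace by the operator norm together with the first trace, and then sum a geometric tail. Since $E^{\mathsf T}E$ is positive semidefinite, for every $\ell\ge1$ we have
\[
\tr\bigl((E^{\mathsf T}E)^\ell\bigr)\le\norm{E}_{\op}^{2(\ell-1)}\tr(E^{\mathsf T}E).
\]
The first trace is controlled by the line norm together with the entry bound:
\[
\tr(E^{\mathsf T}E)=\sum_{i,j}E_{ij}^2\le C_0\sum_{i,j}\abs{E_{ij}}\le C_0\,ns.
\]
We have already recorded that $\norm{E}_{\op}\le s$. With $t\ge n/2$, this gives for each term
\[
\frac{1}{2\ell t^{2\ell}}\tr\bigl((E^{\mathsf T}E)^\ell\bigr)
\le \frac{C_0 ns}{2\ell\,t^2}\Bigl(\frac{s}{t}\Bigr)^{2\ell-2}
\le \frac{2C_0 s}{n}\,\theta^{2\ell-2},
\qquad \theta=\frac{s}{t}\le\frac{2s}{n}.
\]

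Next we sum over $2\ell>R_{\mathrm{tr}}$. Because $R_{\mathrm{tr}}\ge3$, every surviving index has $2\ell\ge4$, so $2\ell-2\ge2$. Moreover $\theta\le q/(2e)\le 1/(8e)<1/2$ by \cref{lem:truncation-parameters}. The tail therefore satisfies
\[
\sum_{2\ell>R_{\mathrm{tr}}}\theta^{2\ell-2}\le\frac{\theta^2}{1-\theta^2}\le 2\theta^2.
\]
Combining this with the per-term bound, the left side of \eqref{eq:cycle-truncation} is at most $(2C_0s/n)\cdot2\theta^2$. Using $\theta\le 2s/n$, this is at most $16C_0 s^3/n^3$.

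It remains to convert $s^3/n^3$ into $s/n^2$. Since $s\le\eta_n n$, we have $s^2/n\le \eta_n^2 n$, which is not bounded, so a bare geometric tail starting at exponent $2$ is not enough on its own. This conversion is the main obstacle, and there are two ways to handle it. The first is to use more of the tail: since $2\ell-2\ge R_{\mathrm{tr}}-1$, we get $\theta^{2\ell-2}\le 2\theta^{R_{\mathrm{tr}}-1}$ summed over the tail. Then $\theta\le q$ gives $\theta^{R_{\mathrm{tr}}}\le q^{R_{\mathrm{tr}}}\le n^{-4}$, and hence $\theta^{R_{\mathrm{tr}}-1}\le n^{-4}/\theta$. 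Feeding this into the per-term bound above gives a tail bound of $(2C_0s/n)\cdot 2n^{-4}\theta^{-1}$, and $\theta^{-1}$ could be large when $s$ is small. The cleaner route is to avoid dividing by $\theta$ and keep one factor of $\theta$ from the prefactor instead. Since $\tr(E^{\mathsf T}E)/t^2\le 4C_0 s/n$, we can write the $\ell$-th term as $\bigl(\tr(E^{\mathsf T}E)/t^2\bigr)\theta^{2\ell-2}/(2\ell)$. For $2\ell-2\ge R_{\mathrm{tr}}-1\ge2$ we split $\theta^{2\ell-2}\le\theta^{R_{\mathrm{tr}}-1}\cdot\theta^{2\ell-1-R_{\mathrm{tr}}}$ and sum the second factor geometrically, which costs at most a factor $2$. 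Now $\theta^{R_{\mathrm{tr}}-1}\le q^{R_{\mathrm{tr}}-1}=q^{R_{\mathrm{tr}}(1-1/R_{\mathrm{tr}})}\le n^{-4(R_{\mathrm{tr}}-1)/R_{\mathrm{tr}}}\le n^{-8/3}\le n^{-1}$, where the middle inequalities use $R_{\mathrm{tr}}\ge3$ and $q^{R_{\mathrm{tr}}}\le n^{-4}$ from \eqref{eq:Rtr-properties}. The total is then at most $8C_0 s/n\cdot n^{-1}$, which is \eqref{eq:cycle-truncation} with $C_{\mathrm{cyc}}=8\max\{1,C_0\}$.
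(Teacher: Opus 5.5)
Your argument is correct. Its skeleton is the paper's: bound $\tr((E^{\mathsf T}E)^\ell)$ by $\norm{E}_{\op}^{2\ell-2}\tr(E^{\mathsf T}E)\le \Lambda_0 n s^{2\ell-1}$, sum the geometric tail, and reduce everything to a constant multiple of $\frac{s}{n}\theta^{R_{\mathrm{tr}}-1}$. The difference lies in how this last quantity is converted into $O(s/n^2)$.

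\emph{The paper's route.} It splits into two regimes. For $s\le\sqrt n$ it uses only $R_{\mathrm{tr}}\ge3$, so $\theta^{R_{\mathrm{tr}}-1}\le\theta^2\le 4s^2/n^2\le 4/n$. For $s>\sqrt n$ it writes $\theta^{R_{\mathrm{tr}}-1}\le q^{R_{\mathrm{tr}}-1}=q^{R_{\mathrm{tr}}}/q$. The factor $1/q=n/(4es)$ cancels the prefactor $s/n$ exactly, leaving $n^{-4}/(4e)$, which is below $s/n^2$ because $s$ is not tiny. This is the repaired form of your abandoned first route: dividing by $q$ instead of $\theta$ removes the dependence on $t$. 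It still needs a lower bound on $s$, hence the split.

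\emph{Your route.} You raise $q^{R_{\mathrm{tr}}}\le n^{-4}$ to the power $(R_{\mathrm{tr}}-1)/R_{\mathrm{tr}}\ge 2/3$. This gives $\theta^{R_{\mathrm{tr}}-1}\le q^{R_{\mathrm{tr}}-1}\le n^{-8/3}$ uniformly. It uses both defining properties of $R_{\mathrm{tr}}$ at once, removes the case distinction, and yields the stronger tail bound $O_{C_0}(s\,n^{-11/3})$.

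\emph{Comparison.} The paper's version shows more transparently which property of $R_{\mathrm{tr}}$ is responsible in which regime; yours is shorter and sharper. One presentational point: the sentence about keeping one factor of $\theta$ from the prefactor does not describe what you then do. The operative step is the fractional-power interpolation, so the write-up should say that directly and drop the false start.
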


\begin{proof}
Since
\[
\tr(E^{\mathsf T}E)\le\Lambda_0ns,
\qquad
\norm{E}_{\op}\le s,
\]
one has
\[
\tr\bigl((E^{\mathsf T}E)^\ell\bigr)
\le \Lambda_0ns^{2\ell-1}.
\]
Let $\ell_0=\lfloor R_{\mathrm{tr}}/2\rfloor+1$.  Then
$2\ell_0>R_{\mathrm{tr}}$ and
$2\ell_0-2\ge R_{\mathrm{tr}}-1$.  Since
$\theta=s/t\le1/2$ for all sufficiently large $n$,
\begin{align}
\sum_{2\ell>R_{\mathrm{tr}}}
\frac1{2\ell t^{2\ell}}
\tr\bigl((E^{\mathsf T}E)^\ell\bigr)
&\le
\frac{\Lambda_0ns}{2t^2}
\sum_{\ell\ge\ell_0}\theta^{2\ell-2}\notag\\
&=
\frac{\Lambda_0ns}{2t^2}
\frac{\theta^{2\ell_0-2}}{1-\theta^2}\notag\\
&\le
\frac{8\Lambda_0}{3}\frac{s}{n}
\theta^{R_{\mathrm{tr}}-1}.
\label{eq:cycle-geometric}
\end{align}
If $s\le\sqrt n$, then $R_{\mathrm{tr}}\ge3$ and
\[
\frac{s}{n}\theta^{R_{\mathrm{tr}}-1}
\le
\frac{s}{n}\left(\frac{2s}{n}\right)^2
\le4\frac{s}{n^2}.
\]
If $s>\sqrt n$, then $\theta\le q$ and
\[
\frac{s}{n}\theta^{R_{\mathrm{tr}}-1}
\le
\frac{s}{n}q^{R_{\mathrm{tr}}-1}
=
\frac{q^{R_{\mathrm{tr}}}}{4e}
\le\frac{n^{-4}}{4e}
\le\frac{s}{n^2}.
\]
Combining the two cases proves \eqref{eq:cycle-truncation}.
\end{proof}

\begin{remark}[Error ledger]\label{rem:error-ledger}
For later reference, the fixed-$t$ logarithmic error is assembled as follows:
\[
\begin{array}{c|c}
\text{source} & \text{uniform bound}\\ \hline
\text{incidence excess }1
  & O_{C_0}(\theta/n)=O_{C_0}(s/n^2)\\
\text{incidence excess at least }2
  & O_{C_0}(\theta R_{\mathrm{tr}}^7/n^2)
    =o_{C_0}(s/n^2)\\
\text{cluster degrees above }R_{\mathrm{tr}}
  & O(s n^{-4})\\
\text{cycle degrees above }R_{\mathrm{tr}}
  & O_{C_0}(s/n^2).
\end{array}
\]
The lower Gamma tail contributes $O_{C_0}(se^{-c n})$, and the displacement
of the determinant potential from $T$ to $n$ contributes
$O_{C_0}(s/n^2)$; these are treated in
\cref{lem:lower-gamma-tail,lem:gamma-average-D}.
\end{remark}

For $t\ge n/2$, put
\begin{equation}\label{eq:D-def}
D(t)=-\frac12\log\det\!\left(I-\frac{E^{\mathsf T}E}{t^2}\right).
\end{equation}

\begin{proposition}[Fixed-$t$ determinant approximation]\label{prop:fixed-t}
For every $0\le C_0<\infty$ there is a constant
$C_{\mathrm{fix}}=C_{\mathrm{fix}}(C_0)$ such that, in the setting of
\cref{thm:small-line-permanent}, the exact difference
\begin{equation}\label{eq:fixed-t-error-def}
\varepsilon_E(t)
:=\log Z_E(t^{-1})
+\frac12\log\det\!\left(I-\frac{E^{\mathsf T}E}{t^2}\right)
\end{equation}
satisfies
\begin{equation}\label{eq:fixed-t-error}
\abs{\varepsilon_E(t)}
\le C_{\mathrm{fix}}\frac{s}{n^2}
\end{equation}
uniformly for every $t\ge n/2$.  The sufficiently-large-$n$ threshold
depends only on $C_0$ and the prescribed sequence $\eta_\bullet$, not on
$E$ or $t$.
\end{proposition}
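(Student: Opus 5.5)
We will prove the proposition by comparing two expansions of $\log Z_E(t^{-1})$ and splitting the error into the four rows of the ledger in \cref{rem:error-ledger}. Fix $t\ge n/2$ and the activities $w_{ij}=E_{ij}/t$. By \eqref{eq:WDelta} and \cref{lem:cluster-bound}, for $n$ beyond a threshold depending only on $\eta_\bullet$, the cluster series \eqref{eq:cluster-series} converges absolutely and equals $\log Z_E(t^{-1})$. The degree-$r$ part of this series is a polynomial identity in the entries of $E/t$. It therefore coincides with the $z^r$ coefficient of the formal series \eqref{eq:connected-partition}, evaluated at $z=t^{-1}$: both are the degree-$r$ homogeneous part of the logarithm of the same polynomial $Z_E$. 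Hence
\[
\log Z_E(t^{-1})=\sum_{r\le R_{\mathrm{tr}}}t^{-r}c_r+\mathcal R_{>},
\]
where $c_r$ is the connected-partition coefficient and $\abs{\mathcal R_>}\le 8sn^{-4}$ by \cref{lem:cluster-truncation}. The tail bound applies because the degree-$r$ part is bounded in absolute value by the degree-$r$ absolute cluster sum.

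Next we split each $c_r$, $r\le R_{\mathrm{tr}}$, according to incidence excess. Terms with a singleton block vanish by \cref{lem:connected-partition}, so every surviving pair has minimum degree at least two and $g\ge0$. The $g=1$ terms are bounded by $C_{\mathrm{one}}\theta/n\le 2C_{\mathrm{one}}s/n^2$ using \cref{lem:excess-one-bound} and $\theta\le 2s/n$. The $g\ge2$ terms are $O_{C_0}(s/n^2)$ by \cref{lem:higher-excess-bound}, since $R_{\mathrm{tr}}^7/n\to0$. The $g=0$ terms with $r\le R_{\mathrm{tr}}$ are, by the counting in \cref{lem:cycle-resummation}, exactly $\sum_{2\ell\le R_{\mathrm{tr}}}\tr((E^{\mathsf T}E)^\ell)/(2\ell t^{2\ell})$. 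Since $\norm{E}_{\op}/t\le 2s/n<1$, \cref{lem:cycle-resummation} with $z=1/t$ gives $D(t)$ as the full series. The difference between this truncated sum and $D(t)$ is the cycle tail bounded by \cref{lem:cycle-truncation}.

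Adding the four contributions gives $\abs{\varepsilon_E(t)}\le (2C_{\mathrm{one}}+C_{\mathrm{high}}'+C_{\mathrm{cyc}}+8)\,s/n^2$. Every threshold used comes only from \cref{lem:truncation-parameters} and the lemmas cited, so it depends only on $C_0$ and $\eta_\bullet$. The main point needing care is the identification of the cluster-expansion degree-$r$ coefficients with the connected-partition coefficients, which is what lets us control the truncated tail absolutely. I expect this to go through by uniqueness of power-series coefficients: replace $w$ by $\lambda w$, note that both sides are analytic in $\lambda$ near $0$ and agree as formal series, and set $\lambda=1$ inside the disk of convergence. A second, minor point is that \cref{lem:excess-one-bound,lem:higher-excess-bound} already use the absolute contraction bounds, so the excess decomposition is legitimate for the finitely many degrees $r\le R_{\mathrm{tr}}$ without any convergence issue.
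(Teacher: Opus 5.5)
Your proposal is correct and follows the paper's proof essentially step by step. It uses the same truncation of the cluster series at $R_{\mathrm{tr}}$ with tail $8sn^{-4}$, the same excess split into $g=0$, $g=1$ and $g\ge2$ for the degrees $r\le R_{\mathrm{tr}}$, and the same comparison of the truncated cycle series with $D(t)$, so you obtain the same signed decomposition and constant. Your explicit justification that the degree-$r$ cluster terms equal $c_rt^{-r}$, via uniqueness of the formal logarithm together with the analytic identity in $\lambda$ from \cref{lem:cluster-bound}, makes precise a step the paper leaves implicit.
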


\begin{proof}
Apply \cref{lem:cluster-bound} to the activities $E_{ij}/t$.  Let
$R_{\mathrm{tr}}$ be defined by \eqref{eq:Rtr-def}.  Absolute convergence
and \eqref{eq:cluster-truncation} give
\[
\log Z_E(t^{-1})
=\sum_{r=1}^{R_{\mathrm{tr}}}c_rt^{-r}+\mathcal R_{\mathrm{cl}}(t),
\qquad
c_r=[z^r]\log Z_E(z),
\]
where
\[
\abs{\mathcal R_{\mathrm{cl}}(t)}\le8s n^{-4}.
\]
For each $r\le R_{\mathrm{tr}}$,
\cref{lem:connected-partition} is a finite exact identity.  Let
$\mathcal P_{\le R_{\mathrm{tr}}}(t)$ denote the signed sum of all its
positive-incidence-excess terms through degree $R_{\mathrm{tr}}$, and let
\[
\mathcal R_{\mathrm{cyc}}(t)
=\sum_{2\ell>R_{\mathrm{tr}}}
 \frac{1}{2\ell t^{2\ell}}
 \tr\bigl((E^{\mathsf T}E)^\ell\bigr).
\]
The zero-incidence-excess terms through degree $R_{\mathrm{tr}}$ are the
corresponding truncated cycle series.  Since the complete cycle series is
$D(t)$, the exact signed decomposition is
\begin{equation}\label{eq:fixed-t-signed-decomposition}
\varepsilon_E(t)
=\mathcal P_{\le R_{\mathrm{tr}}}(t)
 +\mathcal R_{\mathrm{cl}}(t)
 -\mathcal R_{\mathrm{cyc}}(t).
\end{equation}
Here $D(t)$ contains the complete cycle series, while $\log Z_E(t^{-1})$
contains only its truncated part plus the cluster tail; this accounts for the
minus sign.

The absolute value of $\mathcal P_{\le R_{\mathrm{tr}}}(t)$ is bounded by
\eqref{eq:excess-one-bound} and \eqref{eq:higher-excess-bound}.  Since
$\theta/n=s/(tn)\le2s/n^2$, \eqref{eq:excess-one-bound} contributes at most
$2C_{\mathrm{one}}s/n^2$.  After increasing the uniform threshold so that
$R_{\mathrm{tr}}^7/n\le1$, \eqref{eq:higher-excess-bound} contributes at most
$2C_{\mathrm{high}}s/n^2$.  The cluster tail is at most
$8s n^{-4}\le8s/n^2$, and \eqref{eq:cycle-truncation} bounds the cycle tail
by $C_{\mathrm{cyc}}s/n^2$.  Taking absolute values in
\eqref{eq:fixed-t-signed-decomposition} proves \eqref{eq:fixed-t-error} with,
for example,
\[
C_{\mathrm{fix}}
=2C_{\mathrm{one}}+2C_{\mathrm{high}}+8+C_{\mathrm{cyc}}.
\]
This constant depends only on $C_0$ and is independent of
$\eta_\bullet$, $E$, and $t$.
\end{proof}

\subsection{Averaging over the Gamma variable}

The following estimates are immediate from the eigenvalue expansion.

\begin{lemma}[Bounds for the determinant potential]\label{lem:D-bounds}
There is a constant $C_D=C_D(C_0)$ such that, uniformly for $t\ge n/2$,
\begin{equation}\label{eq:D-bounds}
0\le D(t)\le C_D\frac{s}{n},
\qquad
\abs{D'(t)}\le C_D\frac{s}{n^2},
\qquad
\abs{D''(t)}\le C_D\frac{s}{n^3}.
\end{equation}
\end{lemma}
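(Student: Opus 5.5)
Diagonalize $E^{\mathsf T}E$: let $\lambda_1,\dots,\lambda_n\ge0$ be its eigenvalues, so that
\[
D(t)=-\frac12\sum_k\log\bigl(1-\lambda_k/t^2\bigr).
\]
The two inputs we use are both established above:
\[
\max_k\lambda_k=\norm{E}_{\op}^2\le s^2,
\qquad
\sum_k\lambda_k=\tr(E^{\mathsf T}E)=\sum_{i,j}E_{ij}^2\le\Lambda_0\sum_{i,j}\abs{E_{ij}}\le\Lambda_0ns.
\]
For $t\ge n/2$ set $x_k=\lambda_k/t^2$. Then $x_k\le 4s^2/n^2\le 4\eta_n^2\le\frac12$ once $n$ is large, with a threshold depending only on $\eta_\bullet$.

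For the bound on $D$, nonnegativity is immediate because each $-\log(1-x_k)\ge0$. For the upper bound, on $[0,\frac12]$ we have $-\log(1-x)\le 2x$. Hence
\[
D(t)\le\sum_k x_k\le\frac{4\Lambda_0ns}{n^2}=4\Lambda_0\frac sn.
\]

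For the derivatives, differentiate termwise:
\[
D'(t)=-\sum_k\frac{\lambda_k/t^3}{1-\lambda_k/t^2}
=-\frac1t\sum_k\frac{x_k}{1-x_k}.
\]
Since $1-x_k\ge\frac12$ and $t\ge n/2$,
\[
\abs{D'(t)}\le\frac{2}{t}\sum_k x_k\le\frac{2}{t^3}\Lambda_0ns\le 16\Lambda_0\frac{s}{n^2}.
\]
Differentiating once more, each term has the form $\frac{d}{dt}\bigl[t^{-1}g(x_k)\bigr]$ with $g(x)=x/(1-x)$ and $dx_k/dt=-2x_k/t$. This gives
\[
D''(t)=\frac1{t^2}\sum_k\Bigl[g(x_k)+2x_kg'(x_k)\Bigr],
\]
where $g'(x)=(1-x)^{-2}\le4$. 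Therefore
\[
\abs{D''(t)}\le\frac{C}{t^2}\sum_kx_k\le\frac{C\Lambda_0ns}{t^4}\le C'\Lambda_0\frac{s}{n^3}.
\]
Taking $C_D$ to be the largest of these constants, which depends only on $\Lambda_0=\max\{1,C_0\}$, gives all three bounds in \eqref{eq:D-bounds}.

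There is no real obstacle. The key point is to bound $\tr(E^{\mathsf T}E)$ by $\Lambda_0ns$ rather than by $n\norm{E}_{\op}^2$, since the latter would only give $s^2/n$. The only bookkeeping needed is to check that the threshold ensuring $x_k\le\frac12$ depends only on $\eta_\bullet$, and not on $E$ or $t$.
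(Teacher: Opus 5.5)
Your proposal is correct and follows essentially the same route as the paper: an eigenvalue expansion with $\lambda_k\le s^2$ and $\sum_k\lambda_k\le\Lambda_0 ns$, direct differentiation, and denominators bounded below by $1/2$. The only cosmetic difference is in the second derivative: you bound the summand $g(x_k)+2x_kg'(x_k)$ linearly in $x_k$ directly, whereas the paper splits off a term controlled by $\sum_i\lambda_i^2\le s^2\sum_i\lambda_i$ and absorbs it using $s/n=o(1)$.
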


\begin{proof}
Let $\lambda_1,\dots,\lambda_n$ be the eigenvalues of
$E^{\mathsf T}E$.  Since $\lambda_i\le s^2$ and
\[
\sum_i\lambda_i=\norm{E}_{\F}^2
\le \Lambda_0\sum_{i,j}\abs{E_{ij}}
\le \Lambda_0ns,
\]
we have, uniformly for $t\ge n/2$,
\[
\max_i\frac{\lambda_i}{t^2}\le\frac{s^2}{t^2}=o(1).
\]
After increasing the uniform threshold, every denominator below is at least
$1/2$.  Direct differentiation gives
\[
D'(t)=-\sum_i\frac{\lambda_i}{t^3(1-\lambda_i/t^2)},
\]
\[
D''(t)=\sum_i\left(
\frac{3\lambda_i}{t^4(1-\lambda_i/t^2)}
+\frac{2\lambda_i^2}{t^6(1-\lambda_i/t^2)^2}\right).
\]
Consequently,
\[
D(t)=O_{C_0}(ns/t^2),
\qquad
\abs{D'(t)}=O_{C_0}(ns/t^3).
\]
Moreover,
$\sum_i\lambda_i^2\le s^2\sum_i\lambda_i\le\Lambda_0ns^3$, and hence
\[
D''(t)=O_{C_0}(ns/t^4+ns^3/t^6)=O_{C_0}(s/n^3),
\]
because $s/n=o(1)$.  This proves \eqref{eq:D-bounds}.
\end{proof}

\begin{lemma}[The lower Gamma tail is linearly negligible]\label{lem:lower-gamma-tail}
There are constants $c_{\Gamma}=c_{\Gamma}(C_0)>0$ and $C_{\Gamma}=C_{\Gamma}(C_0)$ such that
\begin{equation}\label{eq:lower-gamma-tail}
\E\bigl[\abs{Z_E(T^{-1})-1};\,T<n/2\bigr]
\le C_{\Gamma} s e^{-c_{\Gamma}n}
\end{equation}
for all sufficiently large $n$, uniformly over the matrix class.
\end{lemma}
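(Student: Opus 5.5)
The plan is to expand $Z_E(T^{-1})-1$ over nonempty matchings and bound each size class separately, pairing the Gamma moment restricted to $\{T<n/2\}$ with a crude bound on the total matching weight. By the triangle inequality,
\[
\E\bigl[\abs{Z_E(T^{-1})-1};\,T<n/2\bigr]
\le\sum_{r=1}^{n}w_r\,\E\bigl[T^{-r};\,T<n/2\bigr],
\qquad
w_r=\sum_{\abs M=r}\prod_{e\in M}\abs{E_e}.
\]
Here $w_r$ is the total absolute weight of matchings of size $r$. To bound $w_r$, first choose the $r$ matched rows, then assign each of them a column, dropping the distinctness constraint on columns. Each row contributes at most its $\ell_1$-norm, which is at most $s$, so $w_r\le\binom nr s^r$.

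For the Gamma factor I will use the exact identity
\[
\E\bigl[T^{-r};\,T<n/2\bigr]
=\frac{(n-r)!}{n!}\,\Pr\bigl(T_{n-r}<n/2\bigr),
\qquad T_{n-r}\sim\mathrm{Gamma}(n-r+1,1).
\]
It follows because $t^{n-r}e^{-t}/(n-r)!$ is the Gamma$(n-r+1)$ density. Combining this with the bound on $w_r$ gives
\[
\E\bigl[\abs{Z_E(T^{-1})-1};\,T<n/2\bigr]
\le\sum_{r=1}^n\frac{s^r}{r!}\,\Pr\bigl(T_{n-r}<n/2\bigr).
\]

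The sum is then split at $r=n/4$.

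For $r\le n/4$, the variable $T_{n-r}$ has mean $n-r+1\ge 3n/4$. A standard Chernoff bound for the lower tail of a Gamma variable, using the moment generating function with $\lambda=-1/2$, should give $\Pr(T_{n-r}<n/2)\le e^{-c_1n}$ for an absolute $c_1>0$. Using $s^r/r!\le s\,s^{r-1}/(r-1)!$, the contribution of this range is at most $s e^{s}e^{-c_1n}$. Since $s\le\eta_n n$, this is at most $s e^{-c_1n/2}$ for large $n$.

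For $r>n/4$, I bound the probability by $1$. Stirling gives $s^r/r!\le s\,(es/(r-1))^{r-1}\le s\,(8e\eta_n)^{r-1}$. Once $8e\eta_n\le e^{-8}$, the sum over $r>n/4$ is at most $2s\,e^{-8(n/4-1)}$, which is again of the form $s e^{-cn}$.

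Together the two ranges give \eqref{eq:lower-gamma-tail} with absolute $c_\Gamma$ and $C_\Gamma$. In particular they do not depend on $C_0$; the only class dependence is through $s/n\le\eta_n\to0$, which fixes the threshold. The main obstacle is that the naive total bound $\sum_r s^r/r!\approx e^{s}$ is not small. It is the $e^{-cn}$ Gamma tail beating $e^{s}=e^{o(n)}$ in the small-$r$ range that makes the argument work, and the factor $s$ has to be extracted explicitly from the $r\ge1$ terms.
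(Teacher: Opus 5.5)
Your proof is correct, and it estimates exactly the same quantity as the paper. The paper also drops column distinctness, getting $\abs{Z_E(t^{-1})-1}\le(1+s/t)^n-1$, and so reduces to $\frac1{n!}\int_0^{n/2}e^{-t}\bigl[(t+s)^n-t^n\bigr]\,dt$. Expanding $(t+s)^n$ binomially turns this into precisely your sum $\sum_{r\ge1}\frac{s^r}{r!}\Pr(T_{n-r}<n/2)$. The two proofs differ only in how this quantity is bounded.

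The paper bounds the integral in one step. The mean value theorem gives $(t+s)^n-t^n\le ns(t+s)^{n-1}$. The function $e^{-t}(t+s)^{n-1}$ is nondecreasing on $[0,n/2]$ because $t+s\le n-1$ there. Hence the expectation is at most $\frac{n^2s}{2\,n!}e^{-n/2}(n/2+s)^{n-1}$, and $n!\ge(n/e)^n$ turns this into $O(ns\rho^n)$ for any $e^{1/2}/2<\rho<1$.

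You instead work term by term and split at $r=n/4$. For small $r$, a Chernoff bound with parameter $1/2$ gives $\Pr(T_{n-r}<n/2)\le e^{n/4}(2/3)^{3n/4}=e^{-c_1 n}$ with $c_1\approx0.054$, and this beats the factor $e^{s}=e^{o(n)}$. For large $r$, the factorial decay $s^r/r!\le s(8e\eta_n)^{r-1}$ finishes the job.

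The paper's route is shorter and needs no case split and no tail inequality beyond $n!\ge(n/e)^n$. Yours shows the mechanism more clearly (Gamma lower tail against $e^{s}$) and gives $se^{-cn}$ directly, without absorbing a polynomial factor. It also makes explicit that $c_\Gamma$ and $C_\Gamma$ are absolute. That is equally true of the paper's argument, since neither proof uses the entry bound $C_0$.
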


\begin{proof}
For matchings of size $r$, first choose their $r$ row vertices.  If column
distinctness is ignored, the sum of the absolute products is at most $s^r$.
Hence
\[
\abs{Z_E(t^{-1})-1}\le(1+s/t)^n-1.
\]
Consequently,
\[
\E\bigl[\abs{Z_E(T^{-1})-1};\,T<n/2\bigr]
\le\frac1{n!}\int_0^{n/2}e^{-t}\bigl[(t+s)^n-t^n\bigr]dt.
\]
The mean-value theorem gives
$(t+s)^n-t^n\le ns(t+s)^{n-1}$.  Furthermore,
\begin{equation}\label{eq:lower-tail-monotonicity}
\frac{d}{dt}\log\!\left(e^{-t}(t+s)^{n-1}\right)
=\frac{n-1}{t+s}-1.
\end{equation}
Since $s/n\le\eta_n\to0$, one has $s\le n/2-1$ for all sufficiently
large $n$, uniformly over the class.  For $0\le t\le n/2$ this gives
$t+s\le n-1$, so the right side of
\eqref{eq:lower-tail-monotonicity} is nonnegative.  Thus, using
$n!\ge(n/e)^n$,
\[
\E\bigl[\abs{Z_E(T^{-1})-1};\,T<n/2\bigr]
\le
\frac{n^2s}{2n!}e^{-n/2}(n/2+s)^{n-1}.
\]
Fix a number $\rho$ with $e^{1/2}/2<\rho<1$.  Uniformly over the class,
$e^{1/2}(1/2+s/n)\le\rho$ for all sufficiently large $n$.  The last
display is then at most $C_{\Gamma}ns\rho^n$, and hence at most
$C_{\Gamma}se^{-c_{\Gamma}n}$ after decreasing $c_{\Gamma}$ if necessary.
\end{proof}

\begin{lemma}[Gamma averaging about $n$]\label{lem:gamma-average-D}
One has
\begin{equation}\label{eq:gamma-average-D}
\E\bigl[e^{D(T)}-1;\,T\ge n/2\bigr]
=e^{D(n)}-1+O_{C_0}(s/n^2).
\end{equation}
\end{lemma}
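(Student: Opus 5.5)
Set $F(t)=e^{D(t)}-1$ for $t\ge n/2$. The plan is a second-order Taylor expansion of $F$ about $t=n$, followed by averaging against the Gamma law of $T$. The structure of $T$ is ideal for this: it has shape $n+1$ and rate $1$, so
\[
\E T=n+1,
\qquad
\E(T-n)^2=\operatorname{Var}T+1=n+2,
\]
and $P(T<n/2)\le e^{-cn}$ for some absolute $c>0$.

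The derivative bounds come from \cref{lem:D-bounds}. By that lemma, $D\le C_Ds/n=o(1)$, so $e^{D}\le 2$ for large $n$. Hence, uniformly for $t\ge n/2$,
\[
\abs{F'(t)}=e^{D}\abs{D'}\le 2C_D\frac{s}{n^2},
\qquad
\abs{F''(t)}=e^{D}\bigl(\abs{D''}+D'^2\bigr)\le 2C_D\frac{s}{n^3}+4C_D^2\frac{s^2}{n^4}=O_{C_0}\!\left(\frac{s}{n^3}\right),
\]
where the last step uses $s\le n$.

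For $t\ge n/2$, Taylor's theorem gives
\[
F(t)=F(n)+F'(n)(t-n)+\tfrac12F''(\xi)(t-n)^2 .
\]
I take expectations on the event $\{T\ge n/2\}$.

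\begin{itemize}
\item \emph{Constant term.} This gives $F(n)P(T\ge n/2)=F(n)-F(n)P(T<n/2)$. The correction is at most $C_D(s/n)e^{-cn}$, which is negligible.
\item \emph{Linear term.} Here $\E[(T-n);T\ge n/2]=\E(T-n)-\E[(T-n);T<n/2]=1+O(ne^{-cn})$. Multiplying by $\abs{F'(n)}\le 2C_Ds/n^2$ gives $O(s/n^2)$.
\item \emph{Quadratic term.} This is bounded by $O(s/n^3)\cdot\E(T-n)^2=O(s/n^3)\cdot(n+2)=O(s/n^2)$.
\end{itemize}

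Summing the three contributions yields \eqref{eq:gamma-average-D}.

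The main point to check is that the derivative bounds of \cref{lem:D-bounds} hold uniformly on the entire half-line $t\ge n/2$, rather than only near $n$. They do, since that lemma is stated for all $t\ge n/2$. This matters because the Taylor remainder point $\xi$ can lie anywhere between $n$ and $T$, and $T$ is unbounded above. The linear term is the reason the error is $s/n^2$ rather than $s/n^3$: the mean offset $\E T-n=1$ multiplies $F'(n)\asymp s/n^2$, so no cancellation is available there. That is consistent with the claimed order, so no further refinement is needed.
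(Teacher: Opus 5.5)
Your proof is correct and follows the paper's strategy. Both arguments use a second-order Taylor expansion about $t=n$ on $\{T\ge n/2\}$, averaged with $\E(T-n)=1$, $\E(T-n)^2=n+2$ and the exponentially small lower tail, and in both the linear term $D'(n)\E(T-n)$ is what sets the $s/n^2$ error.

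The only difference is which function you expand. You expand $F=e^{D}-1$ directly and absorb $D'^2$ into the bound on $F''$. The paper instead expands $X=D(T)-D(n)$ and then uses $e^X=1+X+O(X^2)$, which forces it to bound $\E[X^2;T\ge n/2]$ through the fourth moment $\E(T-n)^4=3n^2+26n+24$. Your route avoids that step and is slightly shorter.
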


\begin{proof}
Although $\E T=n+1$, the determinant target in the permanent theorem is $D(n)$.  The unit first-order displacement contributes $D'(n)\E(T-n)=D'(n)=O_{C_0}(s/n^2)$, which is already within the claimed remainder.  Let $\delta=T-n$ and $\mathcal A=\{T\ge n/2\}$.  The Gamma distribution
has
\[
\E\delta=1,
\qquad
\E\delta^2=n+2,
\qquad
\E\delta^4=3n^2+26n+24.
\]
Also, exponential Markov inequality with parameter $1$ gives
\[
\mathbb P(\mathcal A^c)
\le e^{n/2}\E e^{-T}
=\frac{e^{n/2}}{2^{n+1}}
\le e^{-c n}
\]
for an absolute $c>0$.  For $j=1,2$, Cauchy--Schwarz gives the explicit
truncation estimate
\begin{equation}\label{eq:gamma-truncated-moments}
\E\bigl[\abs\delta^j;\mathcal A^c\bigr]
\le
\bigl(\E\abs\delta^{2j}\bigr)^{1/2}
\mathbb P(\mathcal A^c)^{1/2}
\le C_j n^{j/2}e^{-c n/2}.
\end{equation}

Set $X=D(T)-D(n)$ on $\mathcal A$.  Since $D$ is decreasing and
\cref{lem:D-bounds} applies on $[n/2,\infty)$,
\begin{equation}\label{eq:X-uniform}
\abs X\le D(n/2)=O_{C_0}(s/n)=o(1).
\end{equation}
Taylor's theorem and \cref{lem:D-bounds} yield
\[
X=D'(n)\delta+O_{C_0}\!\left(\frac{s}{n^3}\delta^2\right)
\quad\text{on }\mathcal A.
\]
Using \eqref{eq:gamma-truncated-moments}, every omitted lower-tail moment is
multiplied by $D'(n)=O_{C_0}(s/n^2)$ or by $O_{C_0}(s/n^3)$.  Hence
\begin{align*}
\E[X;\mathcal A]
&=D'(n)\left(1+O(\sqrt n\,e^{-cn/2})\right)
 +O_{C_0}\!\left(\frac{s}{n^3}
      (n+2+O(ne^{-cn/2}))\right)\\
&=O_{C_0}(s/n^2).
\end{align*}
Similarly,
\[
\E[X^2;\mathcal A]
=O_{C_0}\!\left(
\frac{s^2}{n^4}\E\delta^2
+\frac{s^2}{n^6}\E\delta^4\right)
=O_{C_0}(s^2/n^3)
=o_{C_0}(s/n^2).
\]
By \eqref{eq:X-uniform}, $e^X=1+X+O(X^2)$ uniformly on $\mathcal A$, so
\[
\E[e^X;\mathcal A]
=\mathbb P(\mathcal A)+O_{C_0}(s/n^2).
\]
Therefore
\begin{align*}
\E[e^{D(T)}-1;\mathcal A]
&=e^{D(n)}\E[e^X;\mathcal A]-\mathbb P(\mathcal A)\\
&=\mathbb P(\mathcal A)(e^{D(n)}-1)+O_{C_0}(s/n^2).
\end{align*}
Finally, $e^{D(n)}-1=O_{C_0}(s/n)$ and
$\mathbb P(\mathcal A^c)\le e^{-cn}$.  Replacing
$\mathbb P(\mathcal A)$ by $1$ therefore costs only
$O_{C_0}(se^{-cn}/n)$, which is absorbed by $O_{C_0}(s/n^2)$.
\end{proof}

\begin{proof}[Proof of \cref{thm:small-line-permanent}]
By \cref{prop:fixed-t}, on $\mathcal A=\{T\ge n/2\}$ the exact error $\varepsilon_E(T)$ from \eqref{eq:fixed-t-error-def} satisfies
$\abs{\varepsilon_E(T)}\le C_{\mathrm{fix}}s/n^2$ and
\[
Z_E(T^{-1})=\exp\bigl(D(T)+\varepsilon_E(T)\bigr).
\]
Since $D(T)=O_{C_0}(s/n)$ on $\mathcal A$,
\[
\E[Z_E(T^{-1})-1;\mathcal A]
=\E[e^{D(T)}-1;\mathcal A]+O_{C_0}(s/n^2).
\]
Using the centred decomposition
\[
\E Z_E(T^{-1})
=1+\E[Z_E(T^{-1})-1;\mathcal A]
  +\E[Z_E(T^{-1})-1;\mathcal A^c]
\]
and then applying
\cref{lem:gamma-representation,lem:lower-gamma-tail,lem:gamma-average-D},
we obtain
\[
\frac{\per(\Jall+E)}{n!}
=1+e^{D(n)}-1+O_{C_0}(s/n^2)
=e^{D(n)}+O_{C_0}(s/n^2).
\]
Because $D(n)\ge0$, one has $e^{D(n)}\ge1$; dividing by $e^{D(n)}$ therefore does not enlarge the additive error.  Since $s/n^2=o(1)$, the preceding display may be written as
\[
\frac{\per(\Jall+E)}{n!}=e^{D(n)}(1+\zeta_n),
\qquad
\abs{\zeta_n}\le C_{\mathrm{perm}} s/n^2.
\]
For all sufficiently large $n$, $\abs{\zeta_n}\le1/2$, so
$\log(1+\zeta_n)=O_{C_0}(s/n^2)$.  This proves
\eqref{eq:small-line-permanent}.
\end{proof}

\section{Latin rectangles and residual permanents}\label{sec:graph-setup}

Let $R$ be an $m\times n$ Latin rectangle.  Its \emph{used column--symbol matrix} is the zero--one matrix
\begin{equation}\label{eq:M-def}
M=M(R),
\qquad
M_{cs}=1
\quad\Longleftrightarrow\quad
\text{symbol $s$ occurs in column $c$ of $R$}.
\end{equation}
Every row sum of $M$ equals $m$ because each array column contains $m$ distinct symbols, and every column sum of $M$ equals $m$ because each symbol occurs once in each of the $m$ Latin rows.  Put
\begin{equation}\label{eq:dC-def}
d=n-m,
\qquad
\Proj=\frac1n\1\1^{\mathsf T},
\qquad
C=M-m\Proj.
\end{equation}
Then
\begin{equation}\label{eq:C-zero-sums}
C\1=C^{\mathsf T}\1=0.
\end{equation}
The adjacency matrix of the residual graph of allowed positions is
\begin{equation}\label{eq:allowed-matrix}
A=\Jall-M=d\Proj-C.
\end{equation}
It is a $d$-regular zero--one matrix.

\begin{lemma}[Extension recurrence]\label{lem:extension-recurrence}
The number of rows that can be appended to $R$ is
\begin{equation}\label{eq:extension-permanent}
X(R)=\per(A).
\end{equation}
Consequently, if $R_m$ is uniformly random among the $m\times n$ Latin rectangles, then
\begin{equation}\label{eq:exact-recurrence}
\frac{L_{m+1,n}}{L_{m,n}}=\E X(R_m).
\end{equation}
\end{lemma}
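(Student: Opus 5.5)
The plan is to identify appended rows with perfect matchings of the residual bipartite graph, and then derive the recurrence by a double-counting argument.

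\emph{Step 1: the permanent counts appendable rows.} A row $\pi$, viewed as a permutation of $[n]$ with $\pi(c)$ the symbol placed in column $c$, can be appended to $R$ exactly when the enlarged $(m+1)\times n$ array is still Latin. The new row is automatically a permutation of $[n]$. So the only condition is that for each column $c$ the symbol $\pi(c)$ does not already occur in column $c$ of $R$, i.e.\ $M_{c\pi(c)}=0$, equivalently $A_{c\pi(c)}=1$ by \eqref{eq:allowed-matrix}. Since $A$ is a zero--one matrix,
\[
\per(A)=\sum_{\pi}\prod_c A_{c\pi(c)}
\]
counts exactly these permutations, which proves \eqref{eq:extension-permanent}. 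One should check that $A=\Jall-M$ is indeed zero--one; this holds because $M$ is zero--one.

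\emph{Step 2: double count the recurrence.} Every $(m+1)\times n$ Latin rectangle determines its first $m$ rows $R$, which form an $m\times n$ Latin rectangle. Conversely, it arises from $R$ by appending exactly one admissible row. Counting the pairs $(R,\text{appended row})$ therefore gives
\[
L_{m+1,n}=\sum_{R}X(R),
\]
where the sum runs over all $m\times n$ Latin rectangles. Dividing by $L_{m,n}$ gives \eqref{eq:exact-recurrence}, because $R_m$ is uniform on this set.

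The division needs $L_{m,n}>0$. This holds for $m\le n$, for example via the cyclic Latin square, whose first $m$ rows form an $m\times n$ Latin rectangle. Alternatively it follows inductively from Hall's condition on the regular residual graph, as in Section~2. The convention $L_{0,n}=1$ covers $m=0$, where $M=0$, $A=\Jall$ and $X=n!$.

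There is no real obstacle here. The only points needing care are the orientation convention, with rows of $A$ indexing columns and columns of $A$ indexing symbols, and the positivity of $L_{m,n}$ needed for the division.
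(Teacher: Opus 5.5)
Your proposal is correct and takes essentially the same route as the paper. Appendable rows are exactly the permutations supported on the allowed cells of $A=\Jall-M$, so they are counted by $\per(A)$, and summing over all $m\times n$ rectangles gives $L_{m+1,n}=\sum_R X(R)$; your check that $L_{m,n}>0$ before dividing is a detail the paper leaves implicit.
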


\begin{proof}
An additional row must choose exactly one allowed symbol in each column and use every symbol exactly once.  Such a choice is precisely a perfect matching of the residual column--symbol graph.  Summing the extension counts over all $m\times n$ rectangles proves \eqref{eq:exact-recurrence}.
\end{proof}

Normalize the residual matrix by
\begin{equation}\label{eq:P-def}
P=\frac{A}{d}=\Proj-\frac{C}{d}.
\end{equation}
Then $P$ is doubly stochastic and
\begin{equation}\label{eq:E-residual}
nP=\Jall+E,
\qquad
E=-\frac{n}{d}C.
\end{equation}

\begin{lemma}[Line bounds for the residual deviation]\label{lem:residual-line-bounds}
Assume $m\le n/2$.  The matrix $E$ in \eqref{eq:E-residual} satisfies
\begin{equation}\label{eq:residual-line-bounds}
E\1=E^{\mathsf T}\1=0,
\qquad
\Jall+E\ge0,
\qquad
\max_{c,s}\abs{E_{cs}}\le1,
\qquad
s(E)=2m.
\end{equation}
\end{lemma}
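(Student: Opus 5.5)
The plan is to verify the four assertions of \eqref{eq:residual-line-bounds} one at a time, working directly from the definitions \eqref{eq:dC-def}, \eqref{eq:allowed-matrix} and \eqref{eq:E-residual}. First, the zero line sums are immediate: $E=-(n/d)C$ is a scalar multiple of $C$, and \eqref{eq:C-zero-sums} gives $C\1=C^{\mathsf T}\1=0$. Nonnegativity is equally direct. By \eqref{eq:E-residual}, $\Jall+E=nP=(n/d)A$, and $A=\Jall-M$ is a zero--one matrix. Since $d=n-m\ge n/2>0$, the matrix $\Jall+E$ is entrywise nonnegative.

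Next I will write the entries explicitly. Since $C=M-(m/n)\Jall$, we have
\[
E_{cs}=-\frac{n}{d}\Bigl(M_{cs}-\frac mn\Bigr)
=\begin{cases}
\dfrac{m}{d}, & M_{cs}=0,\\[1ex]
-\dfrac{n-m}{d}=-1, & M_{cs}=1.
\end{cases}
\]
Equivalently, $E_{cs}=(n/d)A_{cs}-1$. This form gives the entry bound at once. The hypothesis $m\le n/2$ is equivalent to $m\le d$, so $m/d\le 1$, and therefore $\max\abs{E_{cs}}\le1$. This is the only place where the hypothesis $m\le n/2$ is used, apart from ensuring $d>0$.

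Finally, I will compute the line norm. Each row of $M$ has exactly $m$ ones and $n-m=d$ zeros. The $\ell_1$-norm of a row of $E$ is therefore
\[
m\cdot 1+d\cdot\frac md=2m.
\]
The same count applies to every column, because the column sums of $M$ also equal $m$. Hence every row and column $\ell_1$-norm equals exactly $2m$, and so $s(E)=2m$. This holds including $m=0$, where $E=0$.

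No step is a real obstacle here. The only point needing care is the observation that the entries of $E$ take exactly the two values $m/d$ and $-1$. Once that is established, all four assertions reduce to counting.
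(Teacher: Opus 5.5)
Your proposal is correct and follows essentially the same argument as the paper. Both compute $E_{cs}=-1$ on used cells and $E_{cs}=m/d\le1$ on allowed cells, then count $m$ used and $d$ allowed cells per line to get $s(E)=2m$, and read off the zero sums and nonnegativity directly from \eqref{eq:E-residual}.
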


\begin{proof}
On a used cell, $A_{cs}=0$ and hence $E_{cs}=-1$.  On an allowed cell,
\[
E_{cs}=\frac nd-1=\frac md\le1.
\]
Every row and column contains $m$ used and $d$ allowed cells, so its absolute sum is
\[
m+d\frac md=2m.
\]
The zero-sum and nonnegativity assertions follow from \eqref{eq:E-residual}.
\end{proof}

\begin{proposition}[Pointwise residual permanent]\label{prop:pointwise-permanent}
Let $K:\mathbb N\to\mathbb Z_{\ge0}$ satisfy $K(n)=o(n)$.  As $n\to\infty$, uniformly over all $m\times n$ Latin rectangles with $0\le m\le K(n)$,
\begin{equation}\label{eq:pointwise-permanent}
\log X(R)
=\log(n!)+n\log\frac dn
 -\frac12\log\det\!\left(I-\frac{C^{\mathsf T}C}{d^2}\right)
 +O\!\left(\frac{m}{n^2}\right).
\end{equation}
\end{proposition}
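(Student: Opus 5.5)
The plan is to apply \cref{thm:small-line-permanent} with $C_0=1$ to the matrix $E=-\frac nd C$ of \eqref{eq:E-residual}, and then translate the result back to $X(R)=\per(A)$.

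\textbf{Scaling.} By \eqref{eq:allowed-matrix} and \eqref{eq:E-residual}, $A=dP=\frac dn(\Jall+E)$. Multilinearity of the permanent in the rows gives
\[
\per(A)=\left(\frac dn\right)^n\per(\Jall+E),
\]
and therefore
\[
\log X(R)=n\log\frac dn+\log\per(\Jall+E).
\]

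\textbf{Hypotheses.} Since $K(n)=o(n)$, we have $m\le K(n)\le n/2$ for all large $n$. \Cref{lem:residual-line-bounds} then provides the zero line sums, entrywise nonnegativity of $\Jall+E$, the bound $\max\abs{E_{cs}}\le1$, and $s(E)=2m$. Take $\eta_n=2K(n)/n\to0$, which is admissible. Then $s(E)\le\eta_n n$ holds uniformly over all rectangles with $m\le K(n)$. The case $m=0$ gives $E=0$, where the claim is trivial.

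\textbf{Applying the theorem.} With $n_0=n_0(1,\eta_\bullet)$ depending only on $K$, \cref{thm:small-line-permanent} gives
\[
\log\frac{\per(\Jall+E)}{n!}=-\frac12\log\det\!\left(I-\frac{E^{\mathsf T}E}{n^2}\right)+O\!\left(\frac{2m}{n^2}\right),
\]
with an absolute implied constant $C_{\mathrm{perm}}(1)$. Now
\[
\frac{E^{\mathsf T}E}{n^2}=\frac{n^2}{d^2}\cdot\frac{C^{\mathsf T}C}{n^2}=\frac{C^{\mathsf T}C}{d^2},
\]
so the determinant is exactly the one in \eqref{eq:pointwise-permanent}. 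Adding the scaling term yields the proposition.

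\textbf{Main obstacle.} There is little difficulty here. The only care needed is to check that the uniformity is genuinely over all rectangles and all $m\le K(n)$ at once, by fixing the single sequence $\eta_\bullet$ above. One should also confirm that the determinant is positive: since $\norm{C}_{\op}\le 2m\cdot d/n$, this holds once $m=o(n)$.
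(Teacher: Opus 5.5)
Your proposal is correct and is essentially the paper's proof: apply \cref{thm:small-line-permanent} with $C_0=1$ and $\eta_n=2K(n)/n$ to $E=-\frac nd C$ via \cref{lem:residual-line-bounds}, then use $\per(A)=(d/n)^n\per(\Jall+E)$ and $E^{\mathsf T}E/n^2=C^{\mathsf T}C/d^2$. Your separate positivity check on the determinant is harmless but already built into the theorem's setup; your bound on $\norm{C}_{\op}$ is valid, though the paper later uses the sharper $\norm{C}_{\op}\le m$.
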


\begin{proof}
For all sufficiently large $n$, $m\le n/2$.  Apply \cref{thm:small-line-permanent} to \eqref{eq:E-residual}, using \cref{lem:residual-line-bounds} and the choice $\eta_n=2K(n)/n$.  Since
\[
\frac{E^{\mathsf T}E}{n^2}=\frac{C^{\mathsf T}C}{d^2}
\]
and
\[
X(R)=\per(A)=\left(\frac dn\right)^n\per(nP),
\]
formula \eqref{eq:pointwise-permanent} follows.
\end{proof}

\section{A fixed-anchor incomplete-column-path switching}\label{sec:switching}

The local operation used below is the standard incomplete two-column
cycle/path switch, written in directed-path form; see Wanless
\cite{WanlessCycleSwitches} and the Latin-rectangle formulation of Allsop and
Wanless \cite[Section~2]{AllsopWanless}.  The points specific to the present
argument are the fixed-anchor sampling scheme, the resulting state-independent number
of decorated moves, the exact exchangeable-pair regression, and the blocker
term retained in the variance identity.

Throughout this section assume $1\le m<n$.  For a column $a$, let
\[
S_a=S_a(R)=\{s\in[n]:M_{as}=1\}
\]
be the set of symbols used in that column, so $\abs{S_a}=m$.

\subsection{The path involution}

Fix distinct columns $a$ and $h$.  For every row $r$, draw a directed edge
\[
R_{r,a}\longrightarrow R_{r,h}
\]
on the symbol set $[n]$.  Because no symbol is repeated in either column, every symbol has indegree at most one and outdegree at most one.  Hence every nonisolated component is a directed cycle or a directed path.  Every path starts in $S_a\setminus S_h$ and ends in $S_h\setminus S_a$.

\begin{lemma}[Path switch]\label{lem:path-switch}
Let
\[
Q:\quad x_0\longrightarrow x_1\longrightarrow\cdots\longrightarrow x_t
\]
be a directed path in the two-column digraph for $(a,h)$.  For
$i=1,\dots,t$, let $r_i$ be the unique row supplying the edge
$x_{i-1}\to x_i$.  In every row $r_i$, interchange the entries in columns
$a$ and $h$.  The resulting array $R^Q$ is a Latin rectangle and
\begin{equation}\label{eq:path-set-change}
S_a(R^Q)=S_a(R)-x_0+x_t,
\qquad
S_h(R^Q)=S_h(R)-x_t+x_0.
\end{equation}
In the $(a,h)$-digraph of $R^Q$, the same rows supply the reversed path
\[
Q^{\leftarrow}:\quad
x_t\longrightarrow x_{t-1}\longrightarrow\cdots\longrightarrow x_0,
\]
and $Q^{\leftarrow}$ is a whole path component.
\end{lemma}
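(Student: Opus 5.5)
The plan is to verify the three assertions directly from the structure of the two-column digraph. I would begin by recording the local picture along the path. For $1\le i\le t$, row $r_i$ has $R_{r_i,a}=x_{i-1}$ and $R_{r_i,h}=x_i$. The vertices $x_0,\dots,x_t$ are distinct because $Q$ is a path. The rows $r_1,\dots,r_t$ are distinct because each row supplies exactly one edge. Since $Q$ is a whole component, $x_0$ has indegree zero, so $x_0\notin S_h$; likewise $x_t$ has outdegree zero, so $x_t\notin S_a$. This is the fact already noted before the lemma: paths start in $S_a\setminus S_h$ and end in $S_h\setminus S_a$.

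Latinness is then checked row by row and column by column.
\begin{itemize}
\item Each modified row only has two of its entries transposed, so it remains a permutation of $[n]$.
\item Columns other than $a,h$ are untouched.
\item In column $a$, the entries of the rows $r_1,\dots,r_t$ change from $\{x_0,\dots,x_{t-1}\}$ to $\{x_1,\dots,x_t\}$, while all other rows keep their entries. The new column-$a$ multiset is therefore $S_a-x_0+x_t$.
\item This has no repeats, since $x_t\notin S_a$ and the remaining entries were already distinct.
\end{itemize}
Symmetrically, column $h$ becomes $S_h-x_t+x_0$, with no repeats because $x_0\notin S_h$. This gives \eqref{eq:path-set-change}.

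For the reversed path, after the swap row $r_i$ has $a$-entry $x_i$ and $h$-entry $x_{i-1}$, so it supplies the edge $x_i\to x_{i-1}$. These edges form $Q^{\leftarrow}$. Rows outside $\{r_i\}$ supply the same edges as before, none of which touch $\{x_0,\dots,x_t\}$, because $Q$ was a whole component. It remains to see that nothing attaches to $Q^{\leftarrow}$. In $R^Q$, the start $x_t$ has no incoming edge, since $x_t\notin S_h(R^Q)$. The end $x_0$ has no outgoing edge, since $x_0\notin S_a(R^Q)$. The interior vertices already have in- and outdegree one from the path rows. Hence $Q^{\leftarrow}$ is a whole path component.

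I expect no real obstacle here. The only point needing care is the component-maximality bookkeeping in the last step, that is, showing that no unmodified row edge meets the path vertices. That follows from the assumption that $Q$ is a maximal component, combined with the degree bound of at most one.
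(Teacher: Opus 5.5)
Your proposal is correct and follows essentially the same route as the paper. Both arguments verify that each affected row undergoes a transposition, track the column sets as $S_a-x_0+x_t$ and $S_h-x_t+x_0$ using that a path component starts in $S_a\setminus S_h$ and ends in $S_h\setminus S_a$, and then use endpoint and interior degree bookkeeping to show that $Q^{\leftarrow}$ is a whole component. Your extra appeal to the maximality of $Q$ (no unmodified row's edge meets the path vertices) is valid and makes the final degree check redundant.
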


\begin{proof}
The local change is represented schematically by
\[
\begin{array}{c|cc|cc}
&\multicolumn{2}{c|}{R}&\multicolumn{2}{c}{R^Q}\\
& a&h&a&h\\ \hline
r_1&x_0&x_1&x_1&x_0\\
r_2&x_1&x_2&x_2&x_1\\
\vdots&\vdots&\vdots&\vdots&\vdots\\
r_t&x_{t-1}&x_t&x_t&x_{t-1}.
\end{array}
\]
Each affected row undergoes a transposition and remains a permutation.  In
column $a$, the internal symbols $x_1,\dots,x_{t-1}$ are merely moved
between affected rows, while $x_0$ is removed and $x_t$ is inserted.
Column $h$ behaves oppositely.  Since a path begins in
$S_a\setminus S_h$ and ends in $S_h\setminus S_a$, the two inserted
symbols were absent from the columns receiving them.  Thus no column
repetition is created, proving that $R^Q$ is Latin and giving
\eqref{eq:path-set-change}.

In row $r_i$, the directed edge in the new $(a,h)$-digraph is
$x_i\to x_{i-1}$.  All rows outside $Q$ are unchanged.  The original
endpoint $x_t$ had no outgoing edge and the original endpoint $x_0$ had no
incoming edge; after reversal they have, respectively, no incoming and no
outgoing edge.  Each internal vertex still has indegree and outdegree one.
Hence no unchanged edge joins the displayed reversed path, so
$Q^{\leftarrow}$ is a complete path component of the new digraph.
\end{proof}

Fix an anchor column $a$.  For every partner $h\ne a$, every path component
of the $(a,h)$-digraph is an available anchored switch.

\begin{lemma}[State-independent number of anchored paths]\label{lem:path-count}
Every $m\times n$ Latin rectangle has exactly
\begin{equation}\label{eq:path-count}
m(n-m)=md
\end{equation}
anchored path switches at a fixed anchor column $a$.
\end{lemma}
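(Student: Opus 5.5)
The plan is to count, for each partner column $h\ne a$, the number of path components of the $(a,h)$-digraph, and then sum over $h$. For fixed $h$, every nonisolated component is a directed cycle or a directed path. Every path starts at a symbol of $S_a\setminus S_h$ and ends at a symbol of $S_h\setminus S_a$. Conversely, the degree structure forces each such symbol to be the endpoint of exactly one path.

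To see this, take $x\in S_a\setminus S_h$. It has outdegree one, because it appears in column $a$ in some row. It has indegree zero, because it does not appear in column $h$. So $x$ is the initial vertex of a unique path component. Paths cannot be trivial: a symbol lying in neither column is isolated, and the path starting at $x$ has at least one edge. Hence the number of path components of the $(a,h)$-digraph equals $\abs{S_a\setminus S_h}=m-\abs{S_a\cap S_h}$.

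Summing over $h\ne a$ gives
\[
\sum_{h\ne a}\bigl(m-\abs{S_a\cap S_h}\bigr)
=(n-1)m-\sum_{h\ne a}\abs{S_a\cap S_h}.
\]
The remaining step is to evaluate the codegree sum. Each symbol $s\in S_a$ occurs in exactly $m$ columns, since it appears once in each of the $m$ rows and in distinct columns. One of these columns is $a$ itself, so $s$ contributes $m-1$ to the sum. Therefore
\[
\sum_{h\ne a}\abs{S_a\cap S_h}=m(m-1),
\]
and the total is $(n-1)m-m(m-1)=m(n-m)=md$, as claimed in \eqref{eq:path-count}.

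There is no real obstacle here. The only point needing care is the bijection between path components and starting symbols in $S_a\setminus S_h$. This uses the indegree and outdegree bounds already noted before \cref{lem:path-switch}, and it requires checking that every path component is counted exactly once, by its unique source vertex.
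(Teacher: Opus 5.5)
Your proposal is correct and follows essentially the same route as the paper. You count $\abs{S_a\setminus S_h}$ paths for each partner $h$ and evaluate $\sum_{h\ne a}\abs{S_a\cap S_h}=m(m-1)$ by noting that each symbol of $S_a$ lies in exactly $m-1$ other columns; you spell out the source-vertex bijection more explicitly than the paper does.
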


\begin{proof}
For a fixed partner $h$, the number of paths is
$\abs{S_a\setminus S_h}$.  Therefore
\[
\sum_{h\ne a}\abs{S_a\setminus S_h}
=m(n-1)-\sum_{h\ne a}\abs{S_a\cap S_h}.
\]
Every symbol in $S_a$ occurs in exactly $m-1$ other columns, so the final
sum is $m(m-1)$.  The result is $m(n-m)$.
\end{proof}

Let $\mathcal L_{m,n}$ be the set of $m\times n$ Latin rectangles and
define the decorated state space
\begin{equation}\label{eq:decorated-space}
\mathfrak D_a
=\left\{(R,h,Q):
R\in\mathcal L_{m,n},\ h\ne a,\
Q\text{ is a path component of the $(a,h)$-digraph of }R
\right\}.
\end{equation}
If $R^Q$ is obtained by switching $Q$, define
\begin{equation}\label{eq:decorated-involution}
\Phi(R,h,Q)=(R^Q,h,Q^{\leftarrow}).
\end{equation}

\begin{lemma}[Decorated involution and exchangeability]\label{lem:exchangeable}
The map $\Phi$ is an involution of $\mathfrak D_a$.  Every rectangle has
exactly $md$ decorations, and therefore the uniform measure on
$\mathfrak D_a$ is $\Phi$-invariant.  If $(R,h,Q)$ is uniform on
$\mathfrak D_a$ and $R'=R^Q$, then $(R,R')$ is exchangeable.
\end{lemma}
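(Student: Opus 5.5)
The proof has three parts: show that $\Phi$ is well defined and involutive, deduce invariance of the uniform measure, and read off exchangeability.

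\emph{Well defined and involutive.} Let $(R,h,Q)\in\mathfrak D_a$. By \cref{lem:path-switch}, $R^Q$ is an $m\times n$ Latin rectangle, and $Q^{\leftarrow}$ is a whole path component of the $(a,h)$-digraph of $R^Q$. Hence $\Phi(R,h,Q)=(R^Q,h,Q^{\leftarrow})$ lies in $\mathfrak D_a$.

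Now apply $\Phi$ a second time. The path $Q^{\leftarrow}$ is supplied by the same rows $r_1,\dots,r_t$. Switching it interchanges the entries in columns $a$ and $h$ in exactly those rows once more, so the original array $R$ is recovered. The reversed path of $Q^{\leftarrow}$ is $Q$ itself, with the same vertex sequence and the same supplying rows. The partner $h$ is never changed. Therefore $\Phi\circ\Phi=\mathrm{id}$.

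Some care is needed in how $Q$ is identified. I regard $Q$ as a component specified by its vertex sequence, or equivalently by its set of edges together with their rows. With that convention, the equality $(Q^{\leftarrow})^{\leftarrow}=Q$ is literal, and the argument does not depend on any implicit labelling.

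\emph{Invariance.} An involution of a finite set is a bijection, so $\Phi$ permutes $\mathfrak D_a$. A permutation of a finite set preserves the uniform measure. By \cref{lem:path-count}, each rectangle $R$ carries exactly $md$ decorations, since the fibre of $\mathfrak D_a$ over $R$ has size $md$ regardless of $R$. Consequently the $R$-marginal of the uniform measure on $\mathfrak D_a$ is uniform on $\mathcal L_{m,n}$. Conditionally on $R$, the decoration $(h,Q)$ is uniform among the $md$ anchored switches.

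\emph{Exchangeability.} Let $\xi=(R,h,Q)$ be uniform on $\mathfrak D_a$, and let $\pi_1$ denote projection onto the rectangle coordinate. Then $(R,R')=(\pi_1\xi,\pi_1\Phi\xi)$. Since $\Phi\xi$ has the same law as $\xi$, and $\Phi\Phi\xi=\xi$, we have
\[
(\pi_1\Phi\xi,\pi_1\xi)=(\pi_1\Phi\xi,\pi_1\Phi\Phi\xi)\overset{d}{=}(\pi_1\xi,\pi_1\Phi\xi).
\]
So $(R',R)\overset{d}{=}(R,R')$, which is exchangeability.

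There is no genuine obstacle here. The only point that needs checking is that \cref{lem:path-switch} already provides both of the facts used: $Q^{\leftarrow}$ is a complete path component, and switching it undoes the original switch.
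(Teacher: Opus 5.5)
Your proof is correct and follows essentially the same route as the paper: $\Phi$ is well defined and satisfies $\Phi\circ\Phi=\mathrm{id}$ by \cref{lem:path-switch}, since the same rows are transposed again. The uniform measure is invariant because an involution is a bijection, the constant fibre size $md$ from \cref{lem:path-count} makes the $R$-marginal uniform, and your explicit distributional identity for $(R',R)$ spells out the paper's remark that $\Phi$ turns $(R,R^Q)$ into $(R^Q,R)$.
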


\begin{proof}
By \cref{lem:path-switch}, $Q^{\leftarrow}$ is a path component for the
same ordered column pair $(a,h)$ in $R^Q$; in particular, the fixed anchor
remains $a$.  Switching $Q^{\leftarrow}$ reverses the same row
transpositions and recovers both $R$ and the original orientation of $Q$.
Thus $\Phi^2$ is the identity.  By \cref{lem:path-count}, every
$R\in\mathcal L_{m,n}$ has exactly $md$ decorations, so choosing $R$
uniformly and then one of its decorations uniformly is the uniform law on
$\mathfrak D_a$.  An involution preserves this uniform law.  Under
$\Phi$, the ordered pair of rectangle coordinates changes from
$(R,R^Q)$ to $(R^Q,R)$, and hence the projected pair $(R,R')$ is
exchangeable.
\end{proof}

\subsection{Codegree regression and variance}

Fix another column $b\ne a$ and set
\begin{equation}\label{eq:Z-def}
Z=\abs{S_a\cap S_b}.
\end{equation}
Let $Z'$ be the codegree after the random anchored switch and put $\Delta=Z'-Z$.

\begin{proposition}[Exact codegree regression]\label{prop:codegree-regression}
Conditionally on $R$,
\begin{equation}\label{eq:codegree-regression}
\E[\Delta\mid R]
=-\frac{n-1}{md}
 \left(Z-\frac{m(m-1)}{n-1}\right).
\end{equation}
Consequently,
\begin{equation}\label{eq:codegree-mean}
\E Z=\frac{m(m-1)}{n-1}.
\end{equation}
\end{proposition}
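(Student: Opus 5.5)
The plan is to compute $\E[\Delta\mid R]$ directly. By \cref{lem:exchangeable}, conditionally on $R$ the decoration $(h,Q)$ is uniform over the $md$ anchored paths of \cref{lem:path-count}. So it suffices to show
\[
\sum_{(h,Q)}\Delta(h,Q)=m(m-1)-(n-1)Z .
\]
Dividing by $md$ then gives \eqref{eq:codegree-regression}. For the mean, exchangeability of $(R,R')$ gives $\E\Delta=\E Z'-\E Z=0$. Taking expectations in \eqref{eq:codegree-regression} yields $\E Z=m(m-1)/(n-1)$, which is \eqref{eq:codegree-mean}.

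First I would record the local change from \eqref{eq:path-set-change}. A switch on a path $Q$ from $x_0\in S_a\setminus S_h$ to $x_t\in S_h\setminus S_a$ changes only columns $a$ and $h$.

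\emph{Case $h=b$.} Here $S_a$ loses $x_0$ and gains $x_t$, while $S_b$ loses $x_t$ and gains $x_0$. Neither symbol lies in both sets before or after the switch, so $\Delta=0$.

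\emph{Case $h\ne b$.} Column $b$ is untouched. We have $x_0\in S_a$ and $x_t\notin S_a$, and the switch replaces $x_0$ by $x_t$ in $S_a$. Hence
\[
\Delta=\1_{\{x_t\in S_b\}}-\1_{\{x_0\in S_b\}}.
\]

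Next I would sum over paths for a fixed partner $h\ne a,b$. Each path has exactly one start in $S_a\setminus S_h$ and exactly one end in $S_h\setminus S_a$. Paths are therefore in bijection with each of these sets, and both sets have size $\abs{S_a\setminus S_h}$. Summing over the paths for $h$ gives
\[
\abs{(S_h\setminus S_a)\cap S_b}-\abs{(S_a\setminus S_h)\cap S_b}.
\]

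Then I would sum over $h\notin\{a,b\}$ by double counting symbols, using only the fact that every symbol occurs in exactly $m$ columns.
\begin{itemize}[leftmargin=*]
\item A symbol $y\in S_b\setminus S_a$ lies in exactly $m-1$ columns other than $b$, none of which is $a$. These contribute $(m-Z)(m-1)$ to the first sum.
\item A symbol $y\in S_a\cap S_b$ lies in exactly $m-2$ columns outside $\{a,b\}$. It is therefore absent from $(n-2)-(m-2)=n-m$ partners $h$. These contribute $Z(n-m)$ to the second sum.
\end{itemize}
The total is
\[
(m-Z)(m-1)-Z(n-m)=m(m-1)-Z(n-1),
\]
as required.

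The only delicate point is the bookkeeping in the case analysis. One must check that the $h=b$ switches contribute exactly zero rather than being excluded from the average. This matters because the normalisation stays $md$, which is what keeps the regression coefficient state-independent.
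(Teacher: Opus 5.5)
Your proposal is correct and follows the paper's own proof: the same case split ($h=b$ gives $\Delta=0$, $h\ne b$ gives $\Delta=\1_{\{x_t\in S_b\}}-\1_{\{x_0\in S_b\}}$), the same double count of $(m-Z)(m-1)$ positive and $Zd$ negative endpoint incidences divided by the state-independent $md$, and exchangeability to obtain the mean. Summing over the partner $h$ first rather than counting symbol by symbol, as the paper does, is only a cosmetic reorganisation.
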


\begin{proof}
For a chosen path, let $x$ be its initial symbol and $y$ its terminal symbol, and let $h$ be the partner column.  If $h\ne b$, then
\[
\Delta=\1_{\{y\in S_b\}}-\1_{\{x\in S_b\}}.
\]
If $h=b$, the two sets $S_a$ and $S_b$ exchange the same endpoint symbols and $Z$ is unchanged.

Every $x\in S_a\cap S_b$ is absent from exactly $d$ columns, none of which is $b$.  For each such partner, $x$ starts a unique path.  Hence the total number of negative endpoint incidences is $Zd$.

Every $y\in S_b\setminus S_a$ occurs in exactly $m$ columns.  For each partner column containing $y$, the symbol $y$ terminates a unique path.  The partner $h=b$ causes no change in $Z$, leaving $m-1$ positive incidences.  Hence the total number of positive endpoint incidences is $(m-Z)(m-1)$.

Dividing their difference by $md$ proves \eqref{eq:codegree-regression}.  Exchangeability gives $\E\Delta=0$, and hence \eqref{eq:codegree-mean}.
\end{proof}

For a path $Q$, write $\partial Q$ for its two endpoint symbols.  Define
the anchored blocker count
\begin{equation}\label{eq:blocker-def}
B_{ab}(R)
=\#\bigl\{(h,Q):(R,h,Q)\in\mathfrak D_a,\
                     \ \partial Q\subseteq S_b\bigr\}.
\end{equation}
The partner $h=b$ is included in the definition but contributes nothing,
because a path in the $(a,b)$-digraph starts in $S_a\setminus S_b$.

\begin{proposition}[Exact variance defect]\label{prop:variance-defect}
One has
\begin{equation}\label{eq:variance-defect}
\operatorname{Var}Z
=\frac{m(m-1)d}{(n-1)^2}
 -\frac{\E B_{ab}}{n-1}.
\end{equation}
In particular,
\begin{equation}\label{eq:variance-bound}
\operatorname{Var}Z
\le \frac{m(m-1)d}{(n-1)^2}.
\end{equation}
\end{proposition}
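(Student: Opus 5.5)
The plan is to use the standard exchangeable-pair variance identity, driven by the exact linear regression of \cref{prop:codegree-regression}. Put $\mu=m(m-1)/(n-1)$ and $\lambda=(n-1)/(md)$, so that \eqref{eq:codegree-regression} reads $\E[\Delta\mid R]=-\lambda(Z-\mu)$. By \cref{lem:exchangeable} the pair $(Z,Z')$ is exchangeable, and $Z$ is bounded, so every expectation below is finite. Exchangeability gives $\E[(Z'-Z)(Z'+Z-2\mu)]=0$. Rearranging, $\E\Delta^2=-2\E[\Delta(Z-\mu)]$. Conditioning on $R$ and inserting the regression turns this into
\[
\E\Delta^2=2\lambda\operatorname{Var}Z,
\qquad\text{that is,}\qquad
\operatorname{Var}Z=\frac{md}{2(n-1)}\E\Delta^2 .
\]
The rest of the proof is the exact computation of $\E[\Delta^2\mid R]$.

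For this step I would reuse the endpoint bookkeeping from the proof of \cref{prop:codegree-regression}. When $h=b$ we have $\Delta=0$. When $h\ne b$ we have $\Delta=\1_{\{y\in S_b\}}-\1_{\{x\in S_b\}}\in\{-1,0,1\}$. Hence $\Delta^2=\1_{\{x\in S_b\}}+\1_{\{y\in S_b\}}-2\,\1_{\{x,y\in S_b\}}$.

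Summing over the $md$ decorations uses three counts. The total of the first indicator is the count of negative endpoint incidences, namely $Zd$. The total of the second is the count of positive endpoint incidences, namely $(m-Z)(m-1)$; the partner $h=b$ contributes nothing here, as already noted there. The total of the joint indicator is, by definition \eqref{eq:blocker-def}, exactly $B_{ab}(R)$. Therefore
\[
\E[\Delta^2\mid R]=\frac{Zd+(m-Z)(m-1)-2B_{ab}(R)}{md}.
\]

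Taking expectations and using \eqref{eq:codegree-mean} gives
\[
\operatorname{Var}Z=\frac{\mu d+m(m-1)-\mu(m-1)}{2(n-1)}-\frac{\E B_{ab}}{n-1}.
\]
Since $n=d+m$, the numerator simplifies to $m(m-1)\bigl(d-m+1+n-1\bigr)/(n-1)=2m(m-1)d/(n-1)$, which yields \eqref{eq:variance-defect}. The bound \eqref{eq:variance-bound} follows because $B_{ab}\ge0$.

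The only delicate point is the bookkeeping. I must check that the decorations with $h=b$ are excluded consistently from all three counts. They are excluded from the $Zd$ count because such paths start outside $S_b$, and they contribute no blocker because of the remark after \eqref{eq:blocker-def}. I must also check that the joint indicator is exactly $B_{ab}$, with no double counting. Everything else is algebra.
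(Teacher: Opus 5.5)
Your proposal is correct and follows the paper's proof essentially line by line. It uses the same exchangeable-pair identity $\E[(Z'-\mu)^2-(Z-\mu)^2]=0$ combined with the linear regression, and the same conditional second moment $\E[\Delta^2\mid R]=\bigl(Zd+(m-Z)(m-1)-2B_{ab}(R)\bigr)/(md)$. Your indicator identity $\Delta^2=\1_{\{x\in S_b\}}+\1_{\{y\in S_b\}}-2\,\1_{\{x\in S_b\}}\1_{\{y\in S_b\}}$, applied only to decorations with $h\ne b$, simply makes explicit the paper's remark that a blocker path is counted in both incidence totals but has $\Delta=0$.
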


\begin{proof}
The negative and positive incidence counts in the preceding proof are $Zd$ and $(m-Z)(m-1)$.  A path whose two endpoints both lie in $S_b$ is counted once in each incidence count but has $\Delta=0$.  Therefore
\begin{equation}\label{eq:conditional-delta-square}
\E[\Delta^2\mid R]
=\frac{Zd+(m-Z)(m-1)-2B_{ab}(R)}{md}.
\end{equation}
Put
\[
\mu=\frac{m(m-1)}{n-1},
\qquad
\lambda=\frac{n-1}{md}.
\]
By \eqref{eq:codegree-regression}, $\E[\Delta\mid R]=-\lambda(Z-\mu)$.  Exchangeability gives
\[
0=\E\bigl[(Z'-\mu)^2-(Z-\mu)^2\bigr]
=-2\lambda\operatorname{Var}Z+\E\Delta^2.
\]
Taking expectations in \eqref{eq:conditional-delta-square} and using $\E Z=\mu$ yields
\[
\E\Delta^2
=\frac{2m(m-1)d/(n-1)-2\E B_{ab}}{md}.
\]
Substitution gives \eqref{eq:variance-defect}.
\end{proof}

\section{The fourth Schatten moment of the centred incidence matrix}\label{sec:fourth-moment}

Continue to write $M$ for the used incidence matrix and $C=M-m\Proj$.  For distinct columns $a,b$, put
\[
c_{ab}=\abs{S_a\cap S_b}=(MM^{\mathsf T})_{ab},
\qquad
c_{aa}=m,
\qquad
\bar c=\frac{m(m-1)}{n-1}.
\]

\begin{lemma}[Exact codegree--spectral identity]\label{lem:T4-identity}
One has
\begin{equation}\label{eq:T4-identity}
\tr\bigl((C^{\mathsf T}C)^2\bigr)
=\sum_{a\ne b}(c_{ab}-\bar c)^2
 +\frac{m^2d^2}{n-1}.
\end{equation}
\end{lemma}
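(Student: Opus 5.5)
The plan is to compute $\tr\bigl((C^{\mathsf T}C)^2\bigr)$ through the Gram matrix of the rows of $C$. By cyclicity, $\tr\bigl((C^{\mathsf T}C)^2\bigr)=\tr\bigl((CC^{\mathsf T})^2\bigr)=\sum_{a,b}\bigl((CC^{\mathsf T})_{ab}\bigr)^2$, because $CC^{\mathsf T}$ is symmetric. So it suffices to find $CC^{\mathsf T}$ explicitly in terms of the codegrees $c_{ab}=(MM^{\mathsf T})_{ab}$.

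\emph{Computing the Gram matrix.} Expand
\[
CC^{\mathsf T}=MM^{\mathsf T}-mM\Proj-m\Proj M^{\mathsf T}+m^2\Proj^2 .
\]
Every row and column sum of $M$ equals $m$ (see the discussion after \eqref{eq:M-def}). Hence $M\Proj=\Proj M^{\mathsf T}=\frac mn\Jall$. Also $\Proj^2=\Proj=\frac1n\Jall$. These give
\[
CC^{\mathsf T}=MM^{\mathsf T}-\frac{m^2}{n}\Jall .
\]
The diagonal entries are therefore $m-m^2/n=md/n$, and the off-diagonal entries are $c_{ab}-m^2/n$. Consequently
\[
\tr\bigl((C^{\mathsf T}C)^2\bigr)=n\Bigl(\frac{md}{n}\Bigr)^2+\sum_{a\ne b}\Bigl(c_{ab}-\frac{m^2}{n}\Bigr)^2 .
\]

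\emph{Recentring at $\bar c$.} Write $c_{ab}-m^2/n=(c_{ab}-\bar c)+(\bar c-m^2/n)$. A direct computation gives
\[
\bar c-\frac{m^2}{n}=\frac{m[(m-1)n-m(n-1)]}{n(n-1)}=-\frac{md}{n(n-1)} .
\]
The cross term vanishes by a double count of the kind used in \cref{lem:path-count}. For fixed $a$, every symbol of $S_a$ lies in exactly $m-1$ other columns, so $\sum_{b\ne a}c_{ab}=m(m-1)=(n-1)\bar c$. Hence $\sum_{b\ne a}(c_{ab}-\bar c)=0$ for each $a$.

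The constant shift then contributes
\[
n(n-1)\cdot\frac{m^2d^2}{n^2(n-1)^2}=\frac{m^2d^2}{n(n-1)} .
\]
Adding the diagonal contribution $m^2d^2/n$ gives
\[
\frac{m^2d^2}{n}\Bigl(1+\frac1{n-1}\Bigr)=\frac{m^2d^2}{n-1},
\]
which proves \eqref{eq:T4-identity}.

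No step is a real obstacle. The only point needing care is the cross-term cancellation, which relies on the exact regularity of $M$ (each symbol appears in exactly $m$ columns). This is precisely why the centring constant $\bar c=m(m-1)/(n-1)$ matches the codegree mean in \eqref{eq:codegree-mean}.
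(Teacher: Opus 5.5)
Your proof is correct and matches the paper's argument essentially step for step. Both write $CC^{\mathsf T}=MM^{\mathsf T}-\frac{m^2}{n}\Jall$, recentre the off-diagonal entries at $\bar c$ (the cross term vanishes because $\sum_{b\ne a}c_{ab}=m(m-1)$), and add the diagonal contribution $m^2d^2/n$ to the shift contribution $m^2d^2/(n(n-1))$ to get $m^2d^2/(n-1)$.
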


\begin{proof}
Since $CC^{\mathsf T}=MM^{\mathsf T}-m^2\Proj$ and
$\tr((C^{\mathsf T}C)^2)=\tr((CC^{\mathsf T})^2)$,
\begin{equation}\label{eq:T4-entry}
\tr\bigl((C^{\mathsf T}C)^2\bigr)
=\sum_{a,b}\left(c_{ab}-\frac{m^2}{n}\right)^2.
\end{equation}
For every fixed $a$,
\[
\sum_{b\ne a}c_{ab}=m(m-1)=(n-1)\bar c.
\]
Thus
\[
\sum_{a\ne b}\left(c_{ab}-\frac{m^2}{n}\right)^2
=\sum_{a\ne b}(c_{ab}-\bar c)^2
+n(n-1)\left(\bar c-\frac{m^2}{n}\right)^2.
\]
Now
\[
\bar c-\frac{m^2}{n}=-\frac{md}{n(n-1)},
\]
while the diagonal contribution in \eqref{eq:T4-entry} is
\[
n\left(m-\frac{m^2}{n}\right)^2=\frac{m^2d^2}{n}.
\]
The two deterministic terms sum to $m^2d^2/(n-1)$.
\end{proof}

\begin{corollary}[Expected fourth Schatten moment]\label{cor:T4-bound}
For a uniformly random $m\times n$ Latin rectangle with $1\le m<n$,
fix two distinct columns and let $B_{12}$ be the anchored blocker statistic from
\cref{prop:variance-defect}.  Then
\begin{equation}\label{eq:T4-exact-expectation}
\E\tr\bigl((C^{\mathsf T}C)^2\bigr)
=
\frac{n m(m-1)d+m^2d^2}{n-1}-n\E B_{12}.
\end{equation}
Consequently,
\begin{equation}\label{eq:T4-exact-bound}
\E\tr\bigl((C^{\mathsf T}C)^2\bigr)
\le
\frac{n m(m-1)d+m^2d^2}{n-1}.
\end{equation}
At $m=0$ or $m=n$, one has $C=0$, and the following bound remains valid.  In particular,
\begin{equation}\label{eq:T4-bigO}
\E\tr\bigl((C^{\mathsf T}C)^2\bigr)=O(nm^2)
\end{equation}
uniformly for $0\le m\le n$.
\end{corollary}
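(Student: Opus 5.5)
Take expectations in the exact identity of \cref{lem:T4-identity} and evaluate the random sum by symmetry together with \cref{prop:variance-defect}. For $1\le m<n$, the uniform measure on $\mathcal L_{m,n}$ is invariant under permutations of the columns. Hence the joint law of $(c_{ab},B_{ab})$ is the same for every ordered pair of distinct columns. Also $\E c_{ab}=\bar c$ by \eqref{eq:codegree-mean}. Therefore each of the $n(n-1)$ ordered pairs satisfies
\[
\E(c_{ab}-\bar c)^2=\operatorname{Var}Z=\frac{m(m-1)d}{(n-1)^2}-\frac{\E B_{12}}{n-1}.
\]
Here I would state explicitly that \cref{prop:variance-defect}, proved for an arbitrary anchor $a$ and partner $b$, specializes to the pair $(1,2)$. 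I would also note that its blocker expectation is the same for every pair, again by column symmetry.

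Summing over ordered pairs gives
\[
\E\sum_{a\ne b}(c_{ab}-\bar c)^2=n(n-1)\operatorname{Var}Z=\frac{nm(m-1)d}{n-1}-n\E B_{12}.
\]
Adding the deterministic term $m^2d^2/(n-1)$ from \eqref{eq:T4-identity} yields \eqref{eq:T4-exact-expectation}. Since $B_{12}\ge0$, \eqref{eq:T4-exact-bound} follows at once.

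For the $O$-bound, the case $1\le m<n$ uses $d\le n$ and $n/(n-1)\le2$:
\[
\frac{nm(m-1)d+m^2d^2}{n-1}\le\frac{nm^2\cdot n+m^2n^2}{n-1}\le4nm^2.
\]
At $m=0$ we have $M=0$, and at $m=n$ we have $M=\Jall$; in both cases $C=0$, so the trace vanishes and the bound $4nm^2$ holds trivially. The implied constant is therefore absolute.

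I expect no serious obstacle. The only point needing care is justifying that $\operatorname{Var}Z$ and $\E B_{ab}$ are independent of the ordered pair $(a,b)$. This follows because relabelling columns is a bijection of $\mathcal L_{m,n}$ that carries the $(a,h)$-digraphs, the path decorations, and the blocker counts to those of the relabelled pair.
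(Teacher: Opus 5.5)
Your proposal is correct and matches the paper's proof: take expectations in \cref{lem:T4-identity}, use column exchangeability with \eqref{eq:codegree-mean} to reduce the random sum to $n(n-1)\operatorname{Var}(c_{12})$, substitute \eqref{eq:variance-defect}, and drop the nonnegative blocker term. Your explicit constant $4nm^2$ for \eqref{eq:T4-bigO} and your justification of column symmetry spell out what the paper leaves implicit.
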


\begin{proof}
For $1\le m<n$, column exchangeability and \eqref{eq:codegree-mean} give
\[
\E\sum_{a\ne b}(c_{ab}-\bar c)^2
=n(n-1)\operatorname{Var}(c_{12}).
\]
Substitute the exact variance identity \eqref{eq:variance-defect} into
\eqref{eq:T4-identity} to obtain
\eqref{eq:T4-exact-expectation}; dropping the nonnegative blocker term
gives \eqref{eq:T4-exact-bound}.  The endpoint cases are immediate from
$C=0$.
\end{proof}

\section{A one-row extension theorem}\label{sec:one-step}

Let
\begin{equation}\label{eq:Q-def}
Q=\frac{C^{\mathsf T}C}{d^2}.
\end{equation}
It is positive semidefinite and annihilates the constant vector.

\begin{lemma}[First trace and spectral radius]\label{lem:Q-trace}
One has
\begin{equation}\label{eq:Q-trace}
\tr Q=\frac{m}{d}
\end{equation}
and
\begin{equation}\label{eq:Q-norm}
\norm{Q}_{\op}\le\left(\frac{m}{d}\right)^2.
\end{equation}
\end{lemma}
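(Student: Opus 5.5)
The plan is to compute both quantities directly from the structure $C=M-m\Proj$, where $M$ is the zero--one matrix with all line sums equal to $m$.

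For the trace, I will use $\tr Q=d^{-2}\norm{C}_{\F}^2$ and expand
\[
\norm{C}_{\F}^2=\norm{M}_{\F}^2-2m\tr(M^{\mathsf T}\Proj)+m^2\tr(\Proj).
\]
Since $M$ is zero--one with $nm$ ones, $\norm{M}_{\F}^2=nm$. Next, $\tr(M^{\mathsf T}\Proj)=\frac1n\1^{\mathsf T}M\1=\frac1n\cdot nm=m$. Finally, $\tr\Proj=1$. Hence
\[
\norm{C}_{\F}^2=nm-2m^2+m^2=m(n-m)=md,
\]
and therefore $\tr Q=md/d^2=m/d$.

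For the spectral radius, I will bound $\norm{C}_{\op}$ and then use $\norm{Q}_{\op}=\norm{C}_{\op}^2/d^2$. Since $C\1=C^{\mathsf T}\1=0$, the matrix $C$ maps $\1^\perp$ into $\1^\perp$ and kills $\1$. On $\1^\perp$ the operator $\Proj$ vanishes, so $C$ agrees with $M$ there. Thus $\norm{C}_{\op}$ is the norm of $M$ restricted to $\1^\perp$, which is at most $\norm{M}_{\op}$.

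It remains to bound $\norm{M}_{\op}$. Using the earlier estimate $\norm{M}_{\op}\le\sqrt{\norm M_1\norm M_\infty}$ and the fact that every row and column sum is $m$, we get $\norm M_{\op}\le m$. Therefore $\norm{C}_{\op}\le m$, and so $\norm{Q}_{\op}\le m^2/d^2$.

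An alternative route is to apply $s(C)\le 2m$ from \cref{lem:residual-line-bounds}, since $E=-(n/d)C$. That only gives $\norm C_{\op}\le 2m(d/n)$, which is weaker than needed. So the restriction argument above is the preferred route.

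The one point needing care is the claim that $C$ preserves $\1^\perp$ and coincides with $M$ there. This follows because $M\1=m\1$ and $M^{\mathsf T}\1=m\1$, so $M$ maps $\1^\perp$ into itself, and $\Proj$ annihilates $\1^\perp$. Everything else is routine.
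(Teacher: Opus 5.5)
Your proposal is correct and follows essentially the same route as the paper. The trace comes from $\norm{C}_{\F}^2=nm-m^2=md$. The operator bound comes from the observation that $C$ kills $\1$ and agrees with $M$ on $\1^{\perp}$, which the paper phrases as $C=(I-\Proj)M(I-\Proj)$, combined with $\norm{M}_{\op}\le\sqrt{\norm{M}_1\norm{M}_\infty}=m$.
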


\begin{proof}
Since $M$ has $nm$ ones,
\[
\norm{C}_{\F}^2
=\norm{M-m\Proj}_{\F}^2
=nm-m^2=md,
\]
which proves \eqref{eq:Q-trace}.  Also
\[
C=(I-\Proj)M(I-\Proj),
\]
so orthogonal projection cannot increase the operator norm.  Therefore
\[
\norm{C}_{\op}\le\norm{M}_{\op}
\le\sqrt{\norm{M}_1\norm{M}_\infty}=m,
\]
giving \eqref{eq:Q-norm}.
\end{proof}

Define
\begin{equation}\label{eq:U-def}
U(R)
=-\frac12\log\det(I-Q)-\frac12\tr Q.
\end{equation}

\begin{lemma}[Determinant remainder]\label{lem:U-bound}
Let $K:\mathbb N\to\mathbb Z_{\ge0}$ satisfy $K(n)=o(n)$ and let
$0\le m\le K(n)$.  Uniformly over all $m\times n$ Latin rectangles,
\begin{equation}\label{eq:U-positive}
U(R)\ge0,
\end{equation}
\begin{equation}\label{eq:U-pointwise}
U(R)=O\!\left(\frac{m^3}{n^3}\right),
\end{equation}
and, for a uniform rectangle,
\begin{equation}\label{eq:EU}
\E U(R)=O\!\left(\frac{m^2}{n^3}\right).
\end{equation}
\end{lemma}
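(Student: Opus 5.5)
The argument works eigenvalue by eigenvalue, using the scalar function $f(x)=-\tfrac12\log(1-x)-\tfrac12 x$.

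Let $\lambda_1,\dots,\lambda_n\ge0$ be the eigenvalues of $Q$. Then
\[
U(R)=\sum_i f(\lambda_i),
\qquad
f(x)=-\tfrac12\log(1-x)-\tfrac12x=\sum_{\ell\ge2}\frac{x^\ell}{2\ell}.
\]
By \cref{lem:Q-trace}, $\lambda_i\le (m/d)^2$. Since $m\le K(n)=o(n)$, for all sufficiently large $n$ we have $m\le n/2$, hence $d\ge n/2$ and $(m/d)^2\le 1/2$. On $[0,1/2]$ the power series shows that $f\ge0$, which gives \eqref{eq:U-positive}. It also gives $f(x)\le c\,x^2$ for an absolute constant $c$; indeed $f(x)\le \tfrac{x^2}{4}\cdot\frac{1}{1-x}\le x^2/2$.

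For the pointwise bound \eqref{eq:U-pointwise}, combine this with $\sum_i\lambda_i^2\le \norm{Q}_{\op}\tr Q$. This yields
\[
U(R)\le \tfrac12\sum_i\lambda_i^2\le\tfrac12\,\frac{m^2}{d^2}\cdot\frac md=O(m^3/n^3),
\]
uniformly over all rectangles, since $d\ge n/2$.

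For the expectation bound \eqref{eq:EU}, use the same inequality in the form
\[
U(R)\le\tfrac12\tr(Q^2)=\frac{\tr((C^{\mathsf T}C)^2)}{2d^4}.
\]
Take expectations and apply \cref{cor:T4-bound}, specifically \eqref{eq:T4-bigO}, or more explicitly \eqref{eq:T4-exact-bound}. The right side is at most
\[
\frac{nm(m-1)d+m^2d^2}{2(n-1)d^4}=O\!\left(\frac{m^2}{d^3}\right)=O(m^2/n^3).
\]
The case $m=0$ is trivial, since then $C=0$ and $U=0$.

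All the substance is already in \cref{cor:T4-bound}; the only care needed is choosing the threshold on $n$ so that $m/d\le1/\sqrt2$ uniformly for $m\le K(n)$. That choice is what makes the scalar bound $f(x)\le x^2/2$ available and keeps the constants absolute.
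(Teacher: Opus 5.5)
Your proof is correct and is essentially the paper's argument: you expand $U$ through the eigenvalues of $Q$, bound it by a multiple of $\tr(Q^2)$, use $\tr(Q^2)\le\|Q\|_{\mathrm{op}}\tr Q\le(m/d)^3$ for the pointwise estimate, and apply \cref{cor:T4-bound} for the expectation. One small slip: $m\le n/2$ only gives $m/d\le 1$, not $(m/d)^2\le\tfrac12$, so choose the threshold so that, say, $m\le n/3$ (possible because $m\le K(n)=o(n)$), which is what your closing remark already requires.
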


\begin{proof}
Let $q_1,\dots,q_n$ be the eigenvalues of $Q$.  By \eqref{eq:Q-norm}, $\max q_i=o(1)$ uniformly in the stated range.  Hence
\[
U(R)=\frac12\sum_{r\ge2}\frac{\tr(Q^r)}{r}\ge0
\]
and
\begin{equation}\label{eq:U-trQ2}
U(R)\le\frac{\tr(Q^2)}{4(1-\norm{Q}_{\op})}.
\end{equation}
Since
\[
\tr(Q^2)\le\norm{Q}_{\op}\tr Q
\le\left(\frac{m}{d}\right)^3,
\]
formula \eqref{eq:U-pointwise} follows.  Since
\[
\tr(Q^2)=d^{-4}\tr\bigl((C^{\mathsf T}C)^2\bigr),
\]
taking expectations in \eqref{eq:U-trQ2}, applying \cref{cor:T4-bound}, and using $d\sim n$ uniformly gives \eqref{eq:EU}.
\end{proof}

\begin{theorem}[One-row extension rigidity]\label{thm:one-step-L1}
Let $K:\mathbb N\to\mathbb Z_{\ge0}$ satisfy $K(n)=o(n)$, let $0\le m\le K(n)$, and let $R_m$ be uniformly random among the $m\times n$ Latin rectangles.  Put $d=n-m$ and
\begin{equation}\label{eq:Bmn-def}
B_{m,n}
=n!\left(\frac dn\right)^n
 \exp\!\left(\frac{m}{2d}\right).
\end{equation}
Then, as $n\to\infty$, uniformly in the stated range,
\begin{equation}\label{eq:L1-extension}
\E\left|\frac{X(R_m)}{B_{m,n}}-1\right|
=O\!\left(\frac{m}{n^2}\right).
\end{equation}
Consequently,
\begin{equation}\label{eq:one-step-ratio}
\frac{L_{m+1,n}}{L_{m,n}}
=B_{m,n}
\left[1+O\!\left(\frac{m}{n^2}\right)\right].
\end{equation}
\end{theorem}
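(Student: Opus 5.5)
The plan is to combine the pointwise residual-permanent formula of \cref{prop:pointwise-permanent} with the determinant-remainder bounds of \cref{lem:U-bound}. By \cref{prop:pointwise-permanent} and the definition \eqref{eq:U-def}, together with $\tr Q=m/d$ from \cref{lem:Q-trace}, we have uniformly over all $m\times n$ rectangles with $m\le K(n)$
\[
\log X(R)=\log(n!)+n\log\frac dn+\frac{m}{2d}+U(R)+\varepsilon(R),
\qquad \abs{\varepsilon(R)}\le C\frac{m}{n^2}.
\]
That is, $X(R)/B_{m,n}=\exp(U(R)+\varepsilon(R))$. The point is that the deterministic first trace is absorbed exactly into $B_{m,n}$. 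The only random fluctuation beyond $O(m/n^2)$ is then $U(R)$, which is nonnegative.

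Next I would bound $\abs{e^{x}-1}$ for $x=U+\varepsilon$. By \eqref{eq:U-pointwise}, $U(R)=O(m^3/n^3)=o(1)$ uniformly, and $\varepsilon=o(1)$. So for large $n$, $\abs x\le 1$ and $\abs{e^x-1}\le e\abs x\le e(U+\abs\varepsilon)$. Taking expectations gives
\[
\E\left|\frac{X(R_m)}{B_{m,n}}-1\right|\le e\,\E U(R_m)+e\,C\frac{m}{n^2}.
\]
By \eqref{eq:EU}, $\E U=O(m^2/n^3)$, and since $m\le n$ this is $O(m/n^2)$. That proves \eqref{eq:L1-extension}. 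The case $m=0$ is trivial: there $X=n!=B_{0,n}$. The uniform thresholds come from \cref{prop:pointwise-permanent} and \cref{lem:U-bound}, and depend only on $K$.

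For \eqref{eq:one-step-ratio}, I would apply \cref{lem:extension-recurrence}. It gives $L_{m+1,n}/L_{m,n}=\E X(R_m)=B_{m,n}(1+\E[X/B-1])$, and $\abs{\E[X/B-1]}\le\E\abs{X/B-1}=O(m/n^2)$.

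The genuine work is upstream. The step that needs care here is that averaging the pointwise bound \eqref{eq:U-pointwise} would only give $O(m^3/n^3)$, which is too weak when $m\gg\sqrt n$. One must therefore linearize the exponential, using the uniform smallness of $U$, so that only the first moment $\E U$ enters. That first moment is exactly what the switching bound of \cref{cor:T4-bound} controls through \eqref{eq:EU}.
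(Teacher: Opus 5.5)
Your proposal is correct and follows the paper's proof essentially step for step. You absorb the deterministic first trace $m/(2d)$ into $B_{m,n}$, so that $X(R)/B_{m,n}=e^{U(R)+\delta(R)}$; you linearize using the uniform smallness from \eqref{eq:U-pointwise} together with $U\ge0$; you take expectations via \eqref{eq:EU}; and you conclude \eqref{eq:one-step-ratio} from \eqref{eq:exact-recurrence}.
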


\begin{proof}
For $m=0$, both assertions are exact.  Assume $m\ge1$.  By \cref{prop:pointwise-permanent,lem:Q-trace}, there is a remainder $\delta(R)$ satisfying the deterministic uniform bound
\[
\abs{\delta(R)}\le c_{\mathrm{ext}}\frac{m}{n^2}
\]
for an absolute constant $c_{\mathrm{ext}}$.  Here the permanent theorem is applied with $C_0=1$, so its error constant is the fixed number $C_{\mathrm{perm}}(1)$; only the sufficiently-large-$n$ threshold depends on the prescribed range function $K$.  Thus
\[
\log\frac{X(R)}{B_{m,n}}=U(R)+\delta(R).
\]
By \eqref{eq:U-pointwise}, the exponent tends uniformly to zero.  For
all sufficiently large $n$ it has absolute value at most one.  The inequality
$\abs{e^x-1}\le e^{\abs x}\abs x$, together with $U(R)\ge0$, therefore gives
\[
\left|e^{U(R)+\delta(R)}-1\right|
\le e\bigl(U(R)+\abs{\delta(R)}\bigr).
\]
Taking expectations and using \eqref{eq:EU} gives \eqref{eq:L1-extension}.  Equation \eqref{eq:one-step-ratio} follows from \eqref{eq:exact-recurrence}.
\end{proof}

\section{Telescoping the one-row estimate}\label{sec:telescoping}

\begin{lemma}[Exact products]\label{lem:exact-products}
For $0\le k<n$,
\begin{equation}\label{eq:product-factorial}
\prod_{m=0}^{k-1}
 n!\left(1-\frac mn\right)^n
=(n!)^k\left(\frac{(n)_k}{n^k}\right)^n,
\end{equation}
and
\begin{equation}\label{eq:harmonic-sum}
\sum_{m=0}^{k-1}\frac{m}{n-m}
=n(H_n-H_{n-k})-k.
\end{equation}
\end{lemma}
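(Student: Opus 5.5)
Both identities are elementary finite computations, and I would prove each directly rather than by induction.

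For \eqref{eq:product-factorial}, pull the $k$ copies of $n!$ out of the product. This leaves
\[
\prod_{m=0}^{k-1}\left(1-\frac mn\right)^n=\left(\prod_{m=0}^{k-1}\frac{n-m}{n}\right)^n .
\]
The inner product has numerator $n(n-1)\cdots(n-k+1)=(n)_k$ and denominator $n^k$, which is exactly the right-hand side. The hypothesis $k<n$ (indeed $k\le n$ suffices) is only needed so that the factors are the intended ones. At $k=0$ both sides equal $1$ by the conventions $(n)_0=1$ and the empty product.

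For \eqref{eq:harmonic-sum}, write
\[
\frac{m}{n-m}=\frac{n}{n-m}-1 .
\]
Summing over $0\le m\le k-1$ gives
\[
n\sum_{m=0}^{k-1}\frac1{n-m}-k .
\]
Substituting $j=n-m$, the index $j$ runs over $n-k+1,\dots,n$, and $k<n$ ensures $j\ge 1$. The remaining sum is therefore $H_n-H_{n-k}$, with $H_0=0$ covering the case $n-k=0$ if it were allowed. This gives the identity.

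Neither step has a genuine obstacle. The only points needing care are the reindexing bounds and the empty-sum/empty-product case $k=0$, where both sides of each identity vanish or equal one.
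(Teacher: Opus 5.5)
Your proof is correct and takes the same approach as the paper. The paper also reduces the first identity to $\prod_{m=0}^{k-1}(1-m/n)=(n)_k/n^k$, and obtains the second from the decomposition $m/(n-m)=n/(n-m)-1$ followed by reindexing to $H_n-H_{n-k}$.
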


\begin{proof}
The first identity follows from
\[
\prod_{m=0}^{k-1}\left(1-\frac mn\right)=\frac{(n)_k}{n^k}.
\]
For the second, use $m/(n-m)=n/(n-m)-1$.
\end{proof}

\begin{proof}[Proof of \cref{thm:main}]
For $m<k$, write
\[
\frac{L_{m+1,n}}{L_{m,n}}=B_{m,n}(1+\varepsilon_{m,n}),
\]
where \cref{thm:one-step-L1} gives
\[
\abs{\varepsilon_{m,n}}
\le c_{\mathrm{ext}}\frac{m}{n^2}
\]
for the same absolute constant after enlargement if necessary.
The factor $1+\varepsilon_{m,n}$ is positive because it is the positive
extension ratio $L_{m+1,n}/L_{m,n}$ divided by $B_{m,n}>0$.  The displayed
bound is $o(1)$ uniformly for $m\le K(n)$, so
$\abs{\varepsilon_{m,n}}\le1/2$ for all sufficiently large $n$, and
$\abs{\log(1+\varepsilon_{m,n})}\le2\abs{\varepsilon_{m,n}}$.  Since
\[
\sum_{m=0}^{k-1}m=\frac{k(k-1)}2,
\]
we obtain explicitly
\[
\sum_{m=0}^{k-1}\log(1+\varepsilon_{m,n})
=O\!\left(\frac{k(k-1)}{2n^2}\right)
=O\!\left(\frac{k^2}{n^2}\right).
\]
By \cref{lem:exact-products}, the product of the $B_{m,n}$ is exactly $\widetilde A_{k,n}$.  This proves \eqref{eq:main-tilde}.

It remains to compare $\widetilde A_{k,n}$ and $A_{k,n}$.  For every integer $j\ge2$, monotonicity of $x\mapsto1/x$ gives
\[
0\le \int_{j-1}^{j}\frac{dx}{x}-\frac1j
\le \frac1{j-1}-\frac1j.
\]
Summing this inequality for $j=n-k+1,\dots,n$ yields
\begin{equation}\label{eq:harmonic-integral-bound}
0\le
\log\frac{n}{n-k}-(H_n-H_{n-k})
\le \frac1{n-k}-\frac1n
=\frac{k}{n(n-k)}.
\end{equation}
Consequently,
\[
\left|\log\frac{\widetilde A_{k,n}}{A_{k,n}}\right|
\le\frac{k}{2(n-k)}=O(k/n)
\]
uniformly for $k\le K(n)=o(n)$.  Combining this with \eqref{eq:main-tilde} proves \eqref{eq:main-GM}.
\end{proof}

\begin{proof}[Proof of \cref{cor:sublinear}]
By \eqref{eq:main-tilde},
\[
\widetilde x_{k,n}:=\log\frac{L_{k,n}}{\widetilde A_{k,n}}
=O(k^2/n^2)=o(1),
\]
and by \eqref{eq:main-GM},
\[
x_{k,n}:=\log\frac{L_{k,n}}{A_{k,n}}
=O(k/n)=o(1),
\]
uniformly on every sublinear range.  Since
$\abs{e^x-1}\le e^{\abs x}\abs x$, exponentiating the two displays gives
\eqref{eq:sublinear-relative-tilde} and \eqref{eq:sublinear-relative}.
\end{proof}

\section{Concluding remarks}

The proof separates the enumeration problem into a universal analytic component and a Latin-specific probabilistic component.  The permanent theorem resums the complete cycle sector of the matching expansion, while the path switching controls the first nonlinear singular-value statistic of the random residual graph.  Their error terms are summable over every sublinear number of rows.

The present argument does not by itself reach positive density.  In the permanent theorem, the matching-cluster expansion requires the maximum row or column $\ell_1$-norm $s(E)$ to be $o(n)$; for a Latin rectangle at height $m$, that line norm is exactly $2m$.  Within the present framework, treating positive-density or near-square regimes would therefore require a different analytic expansion or an additional probabilistic reduction.

\appendix
\section{Exact low-order and finite-height checks}\label{app:checks}

The identities in this appendix are not used in the proof of
\cref{thm:main}; they provide independent normalization checks for the two
load-bearing transitions in the argument.

\subsection{Cluster coefficients}
The first three coefficients were established abstractly in
\cref{lem:first-cluster-coefficients}.  In particular, they verify the
$1/r!$ normalization, both partition-lattice M\"obius factors, the sign of the
doubled-edge cycle at degree two, and the first positive-incidence-excess
term at degree three.

\subsection{Height two}
For $k=2$, the second row is a derangement of the first, so
$L_{2,n}=n!D_n$.  Since
\[
\frac{D_n}{n!}=e^{-1}+O((n+1)!^{-1}),
\]
and
\[
\log\widetilde A_{2,n}
=2\log(n!)+n\log\left(1-\frac1n\right)
 +\frac12\left(\frac1{n-1}\right),
\]
Taylor expansion gives
\[
\log\frac{L_{2,n}}{\widetilde A_{2,n}}
=-\frac1{6n^2}-\frac1{4n^3}-\frac3{10n^4}+O(n^{-5}),
\]
as stated in \eqref{eq:k2-check}.

\subsection{An exact height-three check}
After relabelling symbols, fix the first row to be the identity permutation.
The second row is then a derangement $\pi$.  For each such $\pi$, let
$A_\pi$ be the zero--one matrix
\[
(A_\pi)_{c,s}=\1_{\{s\notin\{c,\pi(c)\}\}}.
\]
The third row is exactly a perfect matching of $A_\pi$, and therefore
\begin{equation}\label{eq:k3-exact-check}
L_{3,n}=n!\sum_{\pi\in\mathfrak D_n}\per(A_\pi),
\end{equation}
where $\mathfrak D_n$ is the set of derangements.  The following exact values
agree with the published table in \cite{StonesEtAlComputing}.  The final
column is included only as a scale check for \eqref{eq:main-tilde}.
\begin{center}
\small
\begin{tabular}{@{}r r r@{}}
\toprule
$n$ & $L_{3,n}$ & $n^2\log(L_{3,n}/\widetilde A_{3,n})$ \\
\midrule
3 & 12 & 3.346744 \\
4 & 576 & 1.257544 \\
5 & 66240 & -1.248072 \\
6 & 15321600 & -0.585342 \\
7 & 5411750400 & -0.618311 \\
8 & 2834466324480 & -0.596572 \\
\bottomrule
\end{tabular}
\end{center}
These values can be verified directly from \eqref{eq:k3-exact-check} by
enumerating derangements and evaluating each permanent with Ryser's formula.

\section*{Acknowledgements}

The author acknowledges the use of OpenAI's ChatGPT during the preparation of
this manuscript.  While it was used for ideation, formulation, proof
exploration and refinement, narrowing the search space, programming, \LaTeX{}
formatting and other forms of orchestration, the author nonetheless takes full
responsibility for the accuracy of the final contents of this paper.

The accompanying Lean formalisation \cite{LiLean725} was developed by the
author with Codex; the author likewise takes full responsibility for the
formal statements' fidelity to the results of this paper.

\end{document}